\documentclass[12pt]{amsart}
\usepackage{amsmath}
\usepackage{amssymb}
\usepackage{latexsym}

\vfuzz2pt 
\hfuzz2pt 
\newtheorem{theorem}{Theorem}[section]
\newtheorem{cor}[theorem]{Corollary}
\newtheorem{lemma}[theorem]{Lemma}

\newtheorem{prop}[theorem]{Proposition}
\newtheorem{defn}[theorem]{Definition}

\newenvironment{proof*}{\vskip 2mm\noindent {}}{\hfill $\Box$ \vskip 2mm}
\numberwithin{equation}{section}
\newcommand{\C}{{\mathbb{C}}}
\newcommand{\D}{{\mathbb{D}}}

\newcommand{\N}{{\mathbb{N}}}
\renewcommand{\P}{{\mathbb{P}}}
\newcommand{\R}{{\mathbb{R}}}

\newcommand{\I}{{\mathcal{I}}}
\newcommand{\J}{{\mathcal{J}}}
\renewcommand{\O}{{\mathcal{O}}}
\newcommand{\eps}{\varepsilon}
\newcommand{\Om}{\Omega}

\newcommand{\id}{{\rm id}}
\newcommand{\Sym}{{\rm Sym}}



\begin{document}

\title[Limits of Green functions]{Limits of multipole pluricomplex Green functions}

\author[Magn\'usson, Rashkovskii, Sigurdsson, Thomas]
{J\'on I. Magn\'usson, Alexander Rashkovskii, Ragnar Sigurdsson, Pascal J. Thomas}

\address{J\'on I. Magn\'usson and Ragnar Sigurdsson\\
Science Institute, University of Iceland\\
Dunhaga 3\\ IS-107 Reykjavik, Iceland}
\email{jim@hi.is, ragnar@hi.is}

\address{Alexander Rashkovskii\\
Faculty of Science and Technology,
University of Stavanger\\
N-4036 Stavanger, Norway}
\email{alexander.rashkovskii@uis.no}

\address{Pascal J. Thomas\\ 
Universit\'e de Toulouse\\ UPS, INSA, UT1, UTM \\
Institut de Math\'e\-matiques de Toulouse\\
F-31062 Toulouse, France} \email{pthomas@math.univ-toulouse.fr}

\begin{abstract}
Let $\Omega$ be a bounded
hyperconvex domain in $\mathbb C^n$, $0 \in \Omega$,
and $S_\varepsilon$ a  family of $N$ poles in
$\Omega$, all tending to $0$ as
$\varepsilon$ tends to $0$.
To each $S_\varepsilon$ we associate 
its vanishing ideal 
$\I_\varepsilon$ and pluricomplex
Green function $G_\varepsilon=G_{\I_\varepsilon}$.\\
Suppose that,
as $\varepsilon$ tends to $0$, $(\I_\varepsilon)_\varepsilon$ converges to $\mathcal I$ (local uniform convergence), and that 
$(G_\varepsilon)_\varepsilon$
converges to $G$, locally uniformly away from $0$; then
$G \ge G_\I$.
If the Hilbert-Samuel 
multiplicity of $\mathcal I$ is strictly larger than its length
(codimension, equal to $N$ here), then $(G_\varepsilon)_\varepsilon$
cannot converge to $G_\I$. Conversely,
if $\I$ is a complete intersection ideal, then  
$(G_\varepsilon)_\varepsilon$
converges to $G_\I$.
We work out the case of three poles. 
\end{abstract}

\keywords{pluricomplex Green function, complex Monge-Amp\`ere equation, residues, analytic disks, ideals of holomorphic functions, Hilbert-Samuel multiplicity, Brian\c con-Skoda theorem. 2010 {\it Mathematics Subject Classification} 32U35, 32A27}

\thanks{Part of this work was carried out during the stays of
the last-named author in the University of Iceland, and of the 
third-named author in the Universit\'e Paul Sabatier, Toulouse,
made possible by two Partenariats Hubert Curien "Jules Verne" in 2006-2007 (12339SE)
and in 2008-2009 (18980ZB),
financed by the French Minist\`ere
des Affaires Etrang\`eres and the Icelandic Ministry of Education, Science
and Culture.
Another part was done during the stay of the second-named
author in the Universit\'e Paul Sabatier as an invited professor
in June 2009.}

\maketitle
\tableofcontents

\section{Introduction}

\subsection{Definitions}

The aim of this paper is to study convergence of pluricomplex Green
functions with simple logarithmic poles at finitely many points as the
poles tend to a single point.
Pluricomplex Green functions with logarithmic singularities have been
studied by many authors at different levels of generality.  See
e.g.~Demailly \cite{De1},  \cite{Za}, Lempert \cite{Lem}, Lelong \cite{Lel}, 
L\'arusson and Sigurdsson \cite{La-Si} , and
Rashkovskii and Sigurdsson \cite{Ra-Si}.  We restrict our attention to
the case when $\Omega$ is a hyperconvex, bounded, contractible domain
in $\C^n$ containing the origin $0$ and we let $\O(\Omega)$ denote the
space of all holomorphic functions on $\Omega$.  If $\I$ is an ideal
in $\O(\Omega)$, then we let $V(\I)$ denote the zero variety of $\I$,
consisting of all common zeros of the functions in $\I$, and for every 
subset $S$ of $\Omega$ we let $\I(S)$ denote the ideal of all
functions vanishing on $S$.  

We will only consider
ideals $\mathcal I$ in $\mathcal O(\Omega)$, such that
$V(\mathcal I)$ is a finite set.  The elements of $\I$ may be 
defined by local conditions, but
by Cartan's Theorem B, there are finitely many generators $\psi_j \in \mathcal O(\Omega)$ such that
for any $f\in \mathcal I$, there exists $h_j \in \mathcal O(\Omega)$ such that 
$f = \sum_j h_j \psi_j$, see e.g. \cite[Theorem 7.2.9, p. 190]{Ho}. 

\begin{defn}
\label{greenideal}
Let $\mathcal I$ be an ideal of $\Omega$.  For each $a\in \Omega$, let $(\psi_{a,i})_i$ be 
a (local) system of generators. Then
\begin{multline*}
G_{\mathcal I}^\Omega (z) := \sup \big\{ u(z) : u \in PSH_-(\Omega), \\
u(z) \le \max_i \log |\psi_{a,i}| + O(1) , \forall a \in \Omega \big\}.
\end{multline*}
\end{defn}

Note that the condition is meaningful only when $a \in V(\mathcal I)$.
It can be proved that 
$\I\subset\J$ implies $G_{\I} \le G_{\J}$. 
In the special case when $S$ is a finite set in $\Omega$ and
$\mathcal I=\mathcal I(S)$, 
we write $G_{\mathcal I(S)}=G_S$: this case reduces to Pluricomplex Green functions with logarithmic singularities.
 
We know that $G_S$ depends continuously on the poles
when those remain a fixed distance apart \cite{Lel}, and would like to know what happens
 when  $S$
 coalesces to a single point. 
 
The setup of our problem is the following. Let $A$ be a subset of $\C$
with the origin in its closure $\overline A$ and let
$(\I_\eps)_{\eps \in A}$ be a family of ideals in
$\O(\Omega)$.  We are interested in the convergence of
$G_{\I_\eps}$ as $A\ni \eps \to 0$  in general and in
particular for the special case when $\I_\eps =\I(S_\eps)$,
where $S_\eps$ is a set of $N$ distinct points 
$a_1^\eps,\dots,a_N^\eps$ all tending to $0$.

We consider ideals of holomorphic functions as points in
the Douady space, with the attendant topology (see Section 
\ref{doucy} for definitions). 

When convergence is locally uniform,
one inequality always holds between the limit Green function
and the one derived from limits of ideals. 

\begin{prop}
\label{glimunif}
If $G_{\mathcal I_\eps}$ converges uniformly
to $g$ on compact subsets of $\Omega \setminus \{0\}$, and
if  the family 
$(\I_\eps)$ converges,
then
$$
G_{\lim\limits_{\eps\to0}  \mathcal I_\eps} \le \lim\limits_{\eps\to0} G_{\mathcal I_\eps} .
$$
\end{prop}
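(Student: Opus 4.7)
The plan is to prove the pointwise inequality $G_\I\le g$ by transferring holomorphic witnesses for the competitors of $G_\I$ into competitors for $G_{\I_\eps}$, so that the hypothesis $G_{\I_\eps}\to g$ can be applied in the limit.

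First, I would extend $g$ to a plurisubharmonic function $\tilde g\in PSH_-(\Omega)$: since $g$ is the locally uniform limit on $\Omega\setminus\{0\}$ of the $PSH_-$ functions $G_{\I_\eps}$ and is bounded above by $0$, its upper semicontinuous regularization extends across the polar singleton $\{0\}$, producing $\tilde g$. It suffices to show $G_\I\le\tilde g$ on $\Omega$. The concrete test competitors I will use are the functions $u_{m,f}:=\tfrac{1}{m}\log|f|$ with $f\in\I^m$ and $\sup_\Omega|f|\le 1$: each lies in $PSH_-(\Omega)$, and at every $a\in V(\I)$ the inclusion $f\in\I^m$ gives the local bound $|f(z)|\le C(\max_i|\psi_{a,i}(z)|)^m$, whence $u_{m,f}\le\max_i\log|\psi_{a,i}|+O(1)$ near $a$. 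By a Demailly-type approximation theorem for Green functions with ideal singularities (in the spirit of \cite{Ra-Si}), $G_\I$ is the upper semicontinuous regularization of $\sup_{m,f}u_{m,f}$, so it is enough to establish $u_{m,f}\le\tilde g$ for each $(m,f)$.

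Fix such $f\in\I^m$ with $\sup_\Omega|f|\le 1$. The Douady convergence $\I_\eps\to\I$, which extends to $\I_\eps^m\to\I^m$ by the continuity of ideal multiplication, yields $f_\eps\in\I_\eps^m$ with $f_\eps\to f$ uniformly on compact subsets of $\Omega$; after a rescaling by $1+o(1)$ one may assume $\sup_\Omega|f_\eps|\le 1$. The same ideal-membership argument applied at each $a\in V(\I_\eps)$ shows that $\tfrac{1}{m}\log|f_\eps|$ is a competitor for $G_{\I_\eps}$, so $\tfrac{1}{m}\log|f_\eps|\le G_{\I_\eps}$ on $\Omega$. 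Letting $\eps\to 0$ uniformly on compact subsets of $\Omega\setminus\{0\}$ gives $u_{m,f}\le g$ there, which upper regularization propagates to $u_{m,f}\le\tilde g$ on $\Omega$. Taking the supremum over $(m,f)$ then yields $G_\I\le\tilde g$.

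The main obstacle is the approximation input: the identification of $G_\I$ with the upper envelope of the $u_{m,f}$ is non-trivial and relies on Demailly's $L^2$-approximation together with Brian\c con-Skoda to control the relevant multiplier ideals by powers of $\I$. An alternative strategy would be to modify a general competitor $v$ of $G_\I$ directly into a competitor for $G_{\I_\eps}$, but this is obstructed by the fact that $v$ carries no logarithmic pole at any $a_j^\eps\in V(\I_\eps)\setminus V(\I)$, whereas every competitor for $G_{\I_\eps}$ must: the naive modifications $v+\alpha G_{\I_\eps}$ or $\max(v,\phi_\eps)$ either yield only the trivial bound $v\le 0$ or lose the required pole. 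Passing through the $u_{m,f}$ sidesteps this difficulty by choosing competitors that automatically carry all the poles needed.
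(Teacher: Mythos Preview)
Your core strategy---approximate an element of the limit ideal by elements $f_\eps\in\I_\eps$, use $\log|f_\eps|$ (suitably normalized) as a competitor for $G_{\I_\eps}$, and pass to the limit---is exactly the paper's. The execution, however, is considerably heavier than necessary.

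The paper works only with $m=1$. It takes a single $h\in\I$ with $\sup_{\Omega'}|h|<1$ on a relatively compact $\Omega'\Subset\Omega$, approximates it by $h_{\eps_j}\in\I_{\eps_j}$, and obtains $\log|h_{\eps_j}|\le G^{\Omega'}_{\I_{\eps_j}}\le G^{\Omega}_{\I_{\eps_j}}+A(\Omega')$, where the additive constant $A(\Omega')$ comes from the boundary estimate of Lemma~\ref{glb} and tends to $0$ as $\Omega'\uparrow\Omega$. Passing to the limit gives $\log|h|\le g$ for every $h\in\I$ with $\sup_\Omega|h|\le 1$. Applied to a system of global generators $\psi_1,\dots,\psi_k$, this yields $\max_i\log|\psi_i|\le g$; combined with the asymptotic $G_\I\le\max_i\log|\psi_i|+O(1)$ near $0$ (from \cite[Theorem~2.5]{Ra-Si}), the maximality of $g$ on $\Omega\setminus\{0\}$ (immediate from uniform convergence), and the comparison principle \cite[Lemma~4.1]{Ra-Si}, one concludes $G_\I\le g$.

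Compared with this, your route introduces two avoidable complications. First, the passage to powers $\I^m$ forces you to argue that $\I_\eps^m\to\I^m$ in the Douady topology; this is plausible but not free, and unnecessary since $m=1$ suffices. Second, the identification $G_\I=(\sup_{m,f}u_{m,f})^*$ that you flag as the main obstacle is indeed delicate: it is a Demailly--Brian\c con--Skoda type statement, and you have not given it. The paper bypasses this entirely: it never claims the $u_{1,\psi_i}$ exhaust the competitor class, only that they control the singularity of $G_\I$ at $0$, which together with maximality and boundary behaviour of $g$ is enough to finish via the comparison lemma. Your discussion of the ``alternative strategy'' and why one must pass through holomorphic witnesses is, on the other hand, exactly right and matches the paper's reasoning.
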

This is proved in Section \ref{gineq}, after Lemma \ref{boundcv}.

The one-pole 
pluricomplex Green function can be seen as a fundamental solution
to the complex Monge-Amp\`ere operator $(dd^c)^n$, where $d=\partial +
\bar \partial$, $d^c:=\frac1{2\pi i}( \partial - \bar \partial)$. 
The choice of constant in $d^c$ ensures that $(dd^c \log \| \cdot \| )^n=\delta_0$, the Dirac mass at the origin.

When $f\in \mathcal C^2$,  $(dd^c f)^n$ 
is a multiple of the determinant of the complex Hessian matrix.
This operator can be extended to plurisubharmonic functions which are bounded outside of a compact subset of $\Omega$ \cite{Be-Ta}, \cite[Chap. III, \S 3]{De2}.  If $u$ is plurisubharmonic and $(dd^c u)^n=0$ on
an open set $\omega$, we say that $u$ is \emph{maximal plurisubharmonic} on $\omega$. This is equivalent to a form of the maximum principle:
if $\omega_0 \Subset \omega$, 
$v \in PSH(\omega_0)$ and $u\ge v$ on $\partial \omega_0$, then $u\ge v$ on
$\omega_0$.  

\begin{cor}
\label{coridineq}
Under the  hypotheses of Proposition \ref{glimunif},
$$
\left( dd^c \lim_{\eps\to0} G_{\mathcal I_\eps}\right)^n  
\le \left( dd^c G_{\lim\limits_{\eps\to0}  \mathcal I_\eps} \right)^n  .
$$
In particular, since $V(\lim\limits_{\eps\to0}  \mathcal I_\eps)=\{0\}$, then $\lim\limits_{\eps\to0} G_{\mathcal I_\eps}$
is maximal plurisubharmonic outside the origin. 
\end{cor}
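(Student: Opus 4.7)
The plan is as follows. Write $u := G_{\I}$ where $\I = \lim_{\eps \to 0} \I_\eps$, and $v := \lim_{\eps \to 0} G_{\I_\eps}$. By Proposition \ref{glimunif}, $u \le v$ on $\Omega$. Both are non-positive plurisubharmonic functions on $\Omega$; and since the convergence $G_{\I_\eps} \to v$ is uniform on compact subsets of $\Omega \setminus \{0\}$, the function $v$ is locally bounded away from $0$ and has zero boundary values, exactly like $u$. The goal is to show that their Monge-Amp\`ere measures are Dirac masses at the origin and to compare their total masses.

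The first step is to prove the maximality statement, which simultaneously shows that $(dd^c v)^n$ is carried by $\{0\}$. Fix an open set $\omega \Subset \Omega \setminus \{0\}$. Since $V(\I_\eps)$ is a finite set tending to $\{0\}$, we have $V(\I_\eps) \cap \overline\omega = \emptyset$ for all sufficiently small $\eps$; on such $\omega$ the function $G_{\I_\eps}$ is locally bounded and maximal, so $(dd^c G_{\I_\eps})^n = 0$ there. The sequence $(G_{\I_\eps})$ is uniformly bounded and converges uniformly to $v$ on $\omega$, so by the Bedford-Taylor continuity of the Monge-Amp\`ere operator under locally uniform limits of locally bounded plurisubharmonic functions, $(dd^c v)^n = 0$ on $\omega$. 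Since $\omega$ was arbitrary, $v$ is maximal on $\Omega \setminus \{0\}$, which proves the ``in particular'' statement, and $(dd^c v)^n = m_v\,\delta_0$ for some $m_v \ge 0$. The hypothesis $V(\I) = \{0\}$ together with the analogous (and standard) fact for $u = G_\I$ gives $(dd^c u)^n = m_u\,\delta_0$ as well.

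It remains to prove $m_v \le m_u$. Both $u$ and $v$ are non-positive plurisubharmonic on $\Omega$ with zero boundary values and finite total Monge-Amp\`ere mass, so they belong to Cegrell's class $\mathcal F(\Omega)$; since $u \le v$, the comparison principle in this class yields
\[
\int_\Omega (dd^c v)^n \le \int_\Omega (dd^c u)^n,
\]
i.e.\ $m_v \le m_u$, and hence $(dd^c v)^n \le (dd^c u)^n$ as measures. The main subtlety is the unboundedness of $u$ and $v$ at $0$, which prevents a direct appeal to the classical Bedford-Taylor comparison principle. One circumvents this either by invoking Cegrell's extension of the Monge-Amp\`ere operator on $\mathcal F(\Omega)$ or, more elementarily, by applying the bounded comparison principle to the truncations $\max(u,-k)$ and $\max(v,-k)$ for $k \to \infty$, using that outside a fixed small neighbourhood of $0$ the truncations eventually coincide with $u$ and $v$.
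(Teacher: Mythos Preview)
Your argument is correct, and it takes a genuinely different route from the paper's. Both proofs begin the same way: the locally uniform convergence of $G_{\I_\eps}$ on $\Omega\setminus\{0\}$, together with the maximality of each $G_{\I_\eps}$ away from its (shrinking) zero set, forces the limit $v$ to be maximal plurisubharmonic on $\Omega\setminus\{0\}$, so $(dd^cv)^n$ is a point mass at~$0$. The divergence is in how the two masses are compared. The paper does \emph{not} use the inequality $G_\I\le v$ from Proposition~\ref{glimunif}; instead it invokes Proposition~\ref{massdecr} (Cegrell's weak-$*$ semicontinuity) to get $(dd^cv)^n(\Omega)\le N$, and then the algebraic chain $N=\ell(\I)\le e(\I)=(dd^cG_\I)^n(\Omega)$ via Corollary~\ref{lenglim}, Proposition~\ref{leqe} and Proposition~\ref{massgreen}. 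You instead feed $G_\I\le v$ directly into the monotonicity of total Monge--Amp\`ere mass on $\mathcal F(\Omega)$ (or its truncated version), bypassing length and multiplicity entirely. Your approach is shorter and works verbatim for ideals $\I_\eps$ that are not point-based; the paper's approach, on the other hand, yields the explicit quantitative bounds $(dd^cv)^n(\Omega)\le N$ and $(dd^cG_\I)^n(\Omega)=e(\I)$, which are reused later in the proof of Theorem~\ref{thmiff}. One small point you pass over quickly: that $v$ extends to zero on $\partial\Omega$ does not follow from locally uniform convergence on compacta of $\Omega\setminus\{0\}$ alone; it needs the uniform boundary control of Lemma~\ref{roughest} (see the remark following it), which guarantees the convergence actually holds up to $\partial\Omega$.
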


This is proved at the end of Section
\ref{gineq}. 

\subsection{Another definition of convergence of ideals}

Since many analysts, including some authors of the present work, are not familiar with the Douady space, we have found it useful to work with a more concrete approach to the convergence of ideals. It also allows us to work with notions of limes superior and inferior when the family of ideals fails to converge. 

\begin{defn}
\label{convid}
We call \emph{lower limit} of $(\I_\eps)_{\eps\in A}$,
denoted by $\liminf\limits_{A\ni\eps \to 0}\I_\eps$,  the set
of all $f\in \O(\Omega)$ such that $f_\eps\to f$ locally
uniformly, i.e.~uniformly on every compact subset of $\Omega$, 
as $A\ni \eps\to 0$, where $f_\eps \in \I_\eps$.

We call \emph{upper limit} of $(\I_\eps)_{\eps\in A}$,
denoted by 
$\limsup\limits_{A\ni\eps \to 0}\I_\eps$,  the subspace
of $\O(\Omega)$ generated by all functions $f$ such that
$f_j\to f$ locally uniformly, as $j\to \infty$ for some
sequence $\eps_j\to 0$ in $A$ and $f_j\in \I_{\eps_j}$.

If they are equal then we say that the family 
$\I_\eps$ converges and write 
$\lim\limits_{A\ni\eps \to 0}\I_\eps$ for
the common value of the upper and lower limits. 
\end{defn}

Of course $\liminf\limits_{A\ni\eps \to 0}\I_\eps \subset
\limsup\limits_{A\ni\eps \to 0}\I_\eps$,
and it is easy to see that the lower and upper limits
of $(\I_\eps)_{\eps\in A}$ are ideals in $\O(\Omega)$.
 If it is clear from
the context which set $A$ we are referring to, then we drop the 
symbol $A\ni$ from the subscript.  Properties following from this
definition are given in Section \ref{conv.of.id}.

This is equivalent to the notion of convergence inherited from the Douady space, see Proposition \ref{conv.in.dou} and the Remark following it, in
Section \ref{doucy}.

The inequality always between the limit Green function and
the Green function of a limit ideal survives under much weaker 
hypotheses.  

\begin{prop}
\label{gilelim}
If $V(\liminf\limits_{\eps \to 0} \mathcal I_\eps)=\{0\}$, then
$$
G_{\liminf\limits_{\eps \to 0} \mathcal I_\eps} \le \liminf_{\eps \to 0}G_{\mathcal I_\eps }
+O(1).
$$
\end{prop}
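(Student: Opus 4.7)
The plan is to construct, for each small $\eps$, a single plurisubharmonic minorant of $G_{\I_\eps}$ whose limit dominates $G_{\I_0}$ up to a uniform additive constant. By Cartan's Theorem B as cited in the text, $\I_0:=\liminf_{\eps\to 0}\I_\eps$ admits finitely many global generators $\psi_1,\dots,\psi_k \in \O(\Omega)$, and by Definition~\ref{convid} each of these is a locally uniform limit $\psi_j=\lim\psi_j^\eps$ with $\psi_j^\eps \in \I_\eps$.

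The key competitor is
\[
u_\eps(z):=\max_j \log|\psi_j^\eps(z)|-M,
\]
with $M$ chosen uniformly in $\eps$ so that $u_\eps\le 0$. It is plurisubharmonic on $\Omega$, and since $\psi_j^\eps \in \I_\eps$, writing $\psi_j^\eps=\sum_i h_{ji}^\eps \psi_{a,i}^\eps$ against a local system of generators at each $a\in V(\I_\eps)$ yields $\max_j \log|\psi_j^\eps| \le \max_i \log|\psi_{a,i}^\eps|+O(1)$ near $a$. Hence $u_\eps$ is admissible in the supremum defining $G_{\I_\eps}$, so $u_\eps \le G_{\I_\eps}$.

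Passing to $\liminf_{\eps\to 0}$ and using the local uniform convergence $\psi_j^\eps\to\psi_j$ (so that $\liminf \max_j \log|\psi_j^\eps(z)| = \max_j \log|\psi_j(z)|$ wherever $\max_j|\psi_j(z)|>0$, the only exceptional point $z=0$ giving a trivial inequality), one obtains $\max_j \log|\psi_j|-M \le g$, where $g:=\liminf_{\eps\to 0} G_{\I_\eps}$. On the other hand, $(\psi_j)_j$ also generates $\I_0$ at its only pole $0$, so Definition~\ref{greenideal} gives $G_{\I_0}\le \max_j \log|\psi_j|+O(1)$ near $0$; on any compact subset of $\Omega$ avoiding $0$ the same inequality holds trivially, since $G_{\I_0}\le 0$ while $\max_j|\psi_j|$ is bounded below on such compacts. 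Chaining these bounds yields $G_{\I_0}\le g+O(1)$.

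The main technical obstacle is producing $M$ so that $u_\eps\le 0$ \emph{globally} on $\Omega$, since the $\psi_j^\eps\in\O(\Omega)$ need not be bounded there. The obvious workaround of restricting to a hyperconvex $\Omega'\Subset\Omega$ is not immediately safe, because the Green function only grows when the domain shrinks. The remedy is to exploit the maximum principle for the maximal plurisubharmonic function $G_{\I_\eps}$ on $\Omega\setminus V(\I_\eps)$: one establishes $u_\eps\le G_{\I_\eps}+O(1)$ on $\partial\Omega'$, where the $\psi_j^\eps$ are uniformly bounded and $G_{\I_\eps}$ is uniformly bounded below, and then propagates the inequality into $\Omega'$, absorbing the normalization into the final additive constant.
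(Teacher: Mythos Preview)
Your overall strategy---approximate generators of $\I_*:=\liminf_\eps\I_\eps$ by elements of $\I_\eps$, use $\max_j\log|\psi_j^\eps|$ as a competitor, restrict to $\Omega'\Subset\Omega$, and invoke maximality of $G_{\I_\eps}$---is exactly the paper's. But two of your steps are asserted without proof, and both are handled in the paper by a device you do not use: passing to a \emph{reduction} $\J=\langle f^1,\dots,f^n\rangle\subset\I_*$ generated by exactly $n$ functions.

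First, ``Definition~\ref{greenideal} gives $G_{\I_0}\le\max_j\log|\psi_j|+O(1)$'' is backwards. The definition exhibits $\max_j\log|\psi_j|-M$ as a competitor, which yields the \emph{lower} bound; the upper bound on the supremum is not automatic, because the $O(1)$ in the defining class depends on the individual competitor $u$. The paper obtains it by taking the reduction $\J$: then $G_{\I_*}=G_{\bar\J}=G_{\J}$, and \cite[Theorem~2.5]{Ra-Si} (which needs precisely $n$ generators so that $\log\|f\|$ is maximal off its zero set) gives $G_{\J}=\log\|f\|+O(1)$.

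Second, and more seriously, the assertion that ``$G_{\I_\eps}$ is uniformly bounded below on $\partial\Omega'$'' is the content of Lemma~\ref{glb}, and its proof again hinges on the $n$-generator reduction. One approximates the $f^j$ by $f^j_\eps\in\I_\eps$, so $\langle f^1_\eps,\dots,f^n_\eps\rangle\subset\I_\eps$ is a complete intersection; Rouch\'e's theorem then bounds the total multiplicity of the map $f_\eps$ by $e(\I_*)$, whence $G_{\I_\eps}\ge\sum_k m_{\eps,k}G_{a_{\eps,k}}\ge(e(\I_*)+\eta)G_0$ on $\Omega\setminus\omega$. With an arbitrary set of $k>n$ approximate generators $\psi_j^\eps$ there is no such multiplicity count, and nothing you have written rules out $G_{\I_\eps}\to-\infty$ somewhere on $\partial\Omega'$. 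Once you supply Lemma~\ref{glb} (or equivalently, work with the reduction throughout), your sketch becomes the paper's proof.
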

This Proposition will be proved in Section \ref{gineq}, after Lemma \ref{glb}.

Still without convergence of ideals, but under more stringent convergence
hypotheses for the Green functions, we have the following.

\begin{prop}
\label{uniflimsup}
If $G_{\mathcal I_\eps}$ converges uniformly
to $g$ on compact subsets of $\Omega \setminus \{0\}$, then
$G_{\limsup\limits_{\eps \to 0} \mathcal I_\eps} \le g$.
\end{prop}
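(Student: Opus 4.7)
The plan is to show that any $u$ in the defining family for $G_{\limsup \I_\eps}$ satisfies $u \le g$ on $\Omega \setminus \{0\}$, by approximating $u$ by candidates for $G_{\I_{\eps_j}}$ along a suitable sequence $\eps_j \to 0$ and then invoking the uniform convergence $G_{\I_{\eps_j}} \to g$.

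Fix $u \in PSH_-(\Omega)$ in this defining family. Since $V(\limsup \I_\eps) = \{0\}$, the only non-trivial singularity constraint reads $u \le \max_i \log|\psi_i| + C_0$ in a neighborhood of $0$, for some local generators $\psi_1, \dots, \psi_m$ of $\limsup \I_\eps$ at $0$. By Definition \ref{convid} together with the Noetherianity of $\mathcal O_{\Omega,0}$, after a diagonal extraction one may arrange that each $\psi_i$ is the locally uniform limit of $\psi_i^{(j)} \in \I_{\eps_j}$ along a single sequence $\eps_j \to 0$ in $A$. Setting $h_j := \max_i \log|\psi_i^{(j)}|$, one has $h_j \to h := \max_i \log|\psi_i|$ locally uniformly on $\Omega$ minus the common zero set $\{0\}$.

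The key observation is that $h_j$, up to a normalizing constant $\log D_j$, is itself a candidate for $G_{\I_{\eps_j}}$. Indeed, at each pole $a \in V(\I_{\eps_j})$, expressing each $\psi_i^{(j)} \in \I_{\eps_j}$ in a local system of generators $(\phi_{a,k}^{(j)})_k$ of $\I_{\eps_j}$ at $a$ yields the local estimate $h_j \le \max_k \log|\phi_{a,k}^{(j)}| + O(1)$, and hence $h_j - \log D_j \le G_{\I_{\eps_j}}$ globally. One then compares $u$ and $G_{\I_{\eps_j}}$ on $\Omega \setminus V$ via the maximum principle---$G_{\I_{\eps_j}}$ is maximal plurisubharmonic there, once $V$ is a sufficiently small neighborhood of $0$ containing $V(\I_{\eps_j})$ for large $j$---using the estimates $u \le h + C_0$ on $\partial V$ together with the uniform convergence $h_j \to h$ on $\partial V$, and using $u \le 0$ and $G_{\I_{\eps_j}} \to 0$ on $\partial \Omega$. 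Passing to the limit $j \to \infty$ and then shrinking $V$ to $\{0\}$ should deliver $u \le g$ on any compact of $\Omega \setminus \{0\}$.

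The main obstacle is ensuring that the additive constants from the boundary estimate on $\partial V$ (typically $C_0 + \log D_j$) yield a genuinely vanishing error rather than a positive residue. This is precisely where the uniform convergence hypothesis---stronger than the $\liminf$-convergence in Proposition \ref{gilelim}, which only delivers the inequality up to $O(1)$---is essential. Handling this cleanly likely requires choosing the normalizations carefully: working first on an exhaustion $\Omega' \Subset \Omega$ by hyperconvex subdomains on which the $\psi_i^{(j)}$ are uniformly bounded (since a priori they are global holomorphic functions that need not extend to $\partial \Omega$), applying the maximum-principle comparison there, and then letting $\Omega' \nearrow \Omega$ to conclude.
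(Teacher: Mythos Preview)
Your approach parallels the paper's closely: both pick generators $h$ of $\mathcal I^* = \limsup \mathcal I_\eps$, approximate by $h_{\eps_j} \in \mathcal I_{\eps_j}$, use that $\log|h_{\eps_j}|$ (suitably normalized) is dominated by $G_{\mathcal I_{\eps_j}}$, and pass to the limit. The paper does this one generator at a time, directly obtaining $\log|h| \le g + A$ on a relatively compact $\Omega'$ (the constant $A$ coming from comparing $G_{\mathcal I_\eps}^{\Omega'}$ with $G_{\mathcal I_\eps}^\Omega$, exactly as in the proof of Proposition~\ref{gilelim}), and then concludes $G_{\mathcal I^*}\le g$.

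Two issues with your version. First, a minor one: your claim that a diagonal extraction produces a \emph{single} sequence $(\eps_j)$ along which all the $\psi_i$ are simultaneously approximated is unjustified. By Definition~\ref{convid}, different generators of $\limsup$ may arise as limits along disjoint subsequences of $\eps$. This is harmless, however: treat each generator separately (as the paper does), since $\max_i \log|\psi_i| \le g + A$ follows from the individual inequalities $\log|\psi_i| \le g + A$, and the uniform convergence $G_{\mathcal I_\eps}\to g$ holds along every sequence.

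Second, and this is the real gap: your mechanism for disposing of the additive constant does not work. Exhausting by $\Omega' \nearrow \Omega$ cannot make $C_0 + \log D_j$ vanish; those constants are determined by the singularity data and the normalization of the $\psi_i$, independently of $\Omega'$. What actually kills the constant is the \emph{maximality of $g$} on $\Omega \setminus \{0\}$, and this is precisely what uniform convergence buys (a locally uniform limit of maximal plurisubharmonic functions is maximal). Once one knows $u \le g + C$ near $0$, $u \le 0$ on $\Omega$, $g \to 0$ at $\partial\Omega$, and $g$ maximal off $0$, the standard scaling trick applies: for small $\delta$ one has $u \le g + C \le (1-\eta)g$ on $\partial B(0,\delta)$ (since $g \to -\infty$ there), and $u \le 0 = (1-\eta)g$ at $\partial\Omega$; maximality of $(1-\eta)g$ on $\Omega\setminus \overline{B(0,\delta)}$ gives $u \le (1-\eta)g$ there, and letting $\delta \to 0$ then $\eta \to 0$ yields $u \le g$. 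The paper's write-up suppresses this last step, but it is the same device used in Lemma~\ref{boundcv} and, via \cite[Lemma~4.1]{Ra-Si}, throughout the paper.
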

This is proved in Section \ref{gineq}, after Lemma \ref{boundcv}.

\subsection{Main results}

In some privileged situations, equality will hold in Proposition
\ref{glimunif}. 

\begin{defn}
\label{defgci} 
The family of ideals $(\mathcal I_\eps)$ satisfies the \emph{Uniform Complete Intersection
Condition} if for any $\eps$, there exists a map $\Psi_0$ and maps $\Psi_\eps$ 
 from a neighborhood
of $\overline \Omega$ to $\C^n$ such that $\Psi_0$  is proper 
from $\Omega$ to $\Psi_0(\Omega)$, and
\begin{enumerate}
\item 
$\{a_j^\eps, 1 \le j \le N\} = \Psi_\eps^{-1}\{0\}$, for all $\eps$;
\item
For all $\eps \neq 0$, $1\le j \le N$ and $z$ in a neighborhood of $a_j^\eps$,
$$
\left| \log \|\Psi_\eps(z) \| - \log \|z - a_j^\eps \|
\right| \le C(\eps) < \infty ;
$$
\item
$\lim_{\eps\to 0}  \Psi_\eps = \Psi_0=( \Psi_0^1, \dots,  \Psi_0^n)$, uniformly on  $\overline \Omega$.
\end{enumerate}
\end{defn}
Notice that the first two conditions imply
$\mathcal I_\eps = \langle  \Psi_\eps^1, \dots,  \Psi_\eps^n \rangle.$

\begin{theorem}
\label{thmgci}
Let $(\I_\eps)$ be a family of ideals satifying the complete
intersection condition, set $S_\eps=V(\I_\eps)$ and
$\I_0=\langle \Psi^1_0,\dots,\Psi_0^n\rangle$.  Then 
\begin{enumerate}
\item
$\lim\limits_{\eps\to 0} \mathcal I_\eps = \mathcal I_0$,
\item
$\lim\limits_{\eps\to 0} G_\eps = G_{\mathcal I_0}$,
and the convergence is locally uniform on $\Omega \setminus \{0\}$.
\end{enumerate}
\end{theorem}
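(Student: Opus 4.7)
My plan is to establish the ideal convergence (1) first by a Weierstrass-style division argument, and then to derive the Green function convergence (2) via compactness plus a maximality and Monge-Amp\`ere-mass comparison. The inclusion $\mathcal I_0 \subset \liminf_{\eps\to 0}\mathcal I_\eps$ is elementary: for any $f = \sum_i h_i \Psi_0^i \in \mathcal I_0$ with $h_i \in \O(\Omega)$, set $f_\eps := \sum_i h_i \Psi_\eps^i \in \mathcal I_\eps$; then $f_\eps - f = \sum_i h_i(\Psi_\eps^i - \Psi_0^i) \to 0$ locally uniformly since $\Psi_\eps \to \Psi_0$ uniformly on $\overline\Omega$. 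The reverse inclusion $\limsup_{\eps\to 0}\mathcal I_\eps \subset \mathcal I_0$ is the main obstacle in (1): given $f_j \to f$ locally uniformly with $f_j \in \mathcal I_{\eps_j}$, one must produce $h_i \in \O(\Omega)$ with $f = \sum_i h_i \Psi_0^i$. Since $\Psi_0$ is proper and finite of degree $N$ (the common zero count preserved in the limit), the germ rings $\O_{\Omega,0}/\mathcal I_{\eps,0}$ all have dimension $N$ for $\eps$ small, and a Weierstrass-style local division applies with estimates uniform in $\eps$; patching via Cartan's Theorem~B on the Stein domain $\Omega$ gives representations $f_j = \sum_i h_{i,j}\Psi_{\eps_j}^i$ with locally uniformly bounded $h_{i,j}$, and a Montel extraction supplies the desired $h_i$.

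For (2), the candidate $u_\eps(z) := \log\|\Psi_\eps(z)\| - \sup_\Omega\log\|\Psi_\eps\|$ is plurisubharmonic and nonpositive on $\Omega$, with the correct simple-pole singularities at each $a_j^\eps$ by condition~(2) of Definition~\ref{defgci}, so $G_\eps \ge u_\eps$. Since $\Psi_\eps \to \Psi_0$ uniformly and $\|\Psi_0\|>0$ on compacts of $\Omega\setminus\{0\}$, the family $(u_\eps)$ is locally uniformly bounded below there; combined with $G_\eps \le 0$, this makes $(G_\eps)$ a normal family on $\Omega\setminus\{0\}$. Extract a locally uniform cluster point $g$. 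Part (1) together with Proposition~\ref{glimunif} gives $g \ge G_{\mathcal I_0}$, Corollary~\ref{coridineq} shows that $g$ is maximal plurisubharmonic on $\Omega\setminus\{0\}$, and hyperconvexity of $\Omega$ combined with uniform control of the poles forces $g \to 0$ at $\partial\Omega$.

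The reverse inequality $g \le G_{\mathcal I_0}$ is established by comparing Monge-Amp\`ere masses at the origin. Weak continuity of $(dd^c\,\cdot\,)^n$ under locally uniform convergence on compacts of $\Omega\setminus\{0\}$, applied to the measures $(dd^c G_{\eps_j})^n = (2\pi)^n\sum_i \delta_{a_i^{\eps_j}}$ of total mass $(2\pi)^n N$ concentrated at poles collapsing to $0$, yields $(dd^c g)^n = (2\pi)^n N\,\delta_0$; the same identity holds for $G_{\mathcal I_0}$ since $\mathcal I_0 = \langle \Psi_0^1,\dots,\Psi_0^n\rangle$ is itself a complete intersection with an isolated zero of multiplicity $N$ at the origin. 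Since both $g$ and $G_{\mathcal I_0}$ are nonpositive plurisubharmonic functions on the hyperconvex $\Omega$, vanishing at $\partial\Omega$, maximal on $\Omega\setminus\{0\}$, and carrying the same Monge-Amp\`ere mass concentrated at $0$, the Bedford--Taylor domination principle (Dirichlet-problem uniqueness) identifies them. All cluster points coinciding, the full family $(G_\eps)$ converges locally uniformly on $\Omega\setminus\{0\}$ to $G_{\mathcal I_0}$. The two principal technical hurdles are (i) the uniform Weierstrass division behind (1) and (ii) the rigorous justification of the Monge-Amp\`ere mass transfer at $0$ for the unbounded functions $G_\eps$; both rely essentially on the complete-intersection hypothesis to rule out any excess multiplicity in the limit, in line with the Brian\c con--Skoda theme advertised in the abstract.
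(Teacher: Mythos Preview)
Your argument for part~(2) has a genuine gap. You assert that ``weak continuity of $(dd^c\,\cdot\,)^n$ under locally uniform convergence on compacts of $\Omega\setminus\{0\}$'' yields $(dd^c g)^n = N\delta_0$. But the convergence $G_{\eps_j}\to g$ is only locally uniform \emph{away from the singularity}, and for such unbounded functions the Monge--Amp\`ere operator is only upper semicontinuous in total mass: Proposition~\ref{massdecr} (via Cegrell's lemma) gives $(dd^c g)^n(\Omega)\le N$, nothing more. Corollary~\ref{coridineq} likewise only yields the inequality $(dd^c g)^n \le (dd^c G_{\mathcal I_0})^n$. With merely $(dd^c g)^n = \mu\,\delta_0$ for some $\mu\le N$, the comparison principle applied to $G_{\mathcal I_0}$ and $g$ (equal boundary data, $(dd^c G_{\mathcal I_0})^n \ge (dd^c g)^n$) only reproduces $G_{\mathcal I_0}\le g$, which you already have; it does not give the reverse inequality you need. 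Dirichlet uniqueness requires \emph{equality} of the measures, which you have not established.

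The paper closes this gap by a completely different route: it proves a uniform \emph{upper} bound $G_\eps \le \log\|\Psi_\eps\| + O(1)$, exploiting the key fact that $\log\|\Psi_\eps\|$ is \emph{maximal} plurisubharmonic on $\Omega\setminus S_\eps$ (precisely because $\Psi_\eps$ has exactly $n$ components, so $(dd^c\log\|\Psi_\eps\|)^n=0$ off the zero set). Together with the easy lower bound $G_\eps \ge \log\|\Psi_\eps\| - C$ that you also note, this sandwiches $G_\eps$ by $\log\|\Psi_\eps\|+O(1)$ uniformly in $\eps$; Lemma~\ref{boundcv} then gives locally uniform convergence to some $g$ with $g\sim_0\log\|\Psi_0\|\sim_0 G_{\mathcal I_0}$, and \cite[Lemma~4.1]{Ra-Si} identifies $g=G_{\mathcal I_0}$. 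You never use this maximality of $\log\|\Psi_\eps\|$, and without it your mass argument cannot be completed.

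For part~(1), your inclusion $\mathcal I_0\subset\liminf\mathcal I_\eps$ matches the paper. For the reverse inclusion the paper takes a different and cleaner path: it invokes the continuity principle for multidimensional Grothendieck residues together with the local duality theorem (membership in a complete-intersection ideal is characterized by vanishing of all residues $\mathrm{res}_{0,\Psi_0}(fg)$), which immediately transfers $f_j\in\mathcal I_{\eps_j}$ to $f\in\mathcal I_0$. Your proposed ``Weierstrass division with estimates uniform in $\eps$, then Montel'' is in principle workable, but the uniform division bounds are a nontrivial assertion that you have only sketched; the residue approach avoids this entirely.
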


This theorem is proved in Section \ref{secproofs}.

We shall see that the above equalities only rarely hold. 
We need to recall the notions of length and multiplicity for an ideal.

\begin{defn}
\label{hsmult}
Let $\mathcal I$ be an ideal of $\mathcal O(\Omega)$ 
such that $V(\mathcal I)$ is a finite
set.
\begin{enumerate}
\item
The \emph{length} of $\I$ is 
$\ell (\mathcal I):= \dim \mathcal O/\mathcal I$. 
\item
The \emph{Hilbert-Samuel multiplicity} of $\mathcal I$ is 
$$ e(\mathcal I)=\lim_{k\to\infty}\frac{n!}{k^n}\,\ell(\mathcal I^k).$$
\item 
If $\mathcal I$ admits a set of $n$ generators, it is called
a parameter ideal, or \emph{complete
intersection} ideal.
\end{enumerate}
\end{defn}

Multiplicity is no smaller than length, and they match only in very
particular cases.

\begin{prop}{\cite[Ch. VIII, Theorem 23]{Za-Sa}}
\label{leqe}
If $V(\mathcal I) = \{a\}$, 
$e(\mathcal I) \ge \ell(\mathcal I)$, and 
$e(\mathcal I) = \ell(\mathcal I)$ if and 
only if $\mathcal I$ is a complete
intersection ideal.
\end{prop}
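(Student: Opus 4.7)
The plan is to reduce the proposition to a standard commutative-algebra statement about $\mathfrak m$-primary ideals in the regular local ring $R := \O_a$ of dimension $n$. Since $V(\I) = \{a\}$, one has $\O(\Omega)/\I \cong R/\I R$, so $\ell(\I) = \ell(R/\I R)$ and $e(\I) = e(\I R)$; moreover $\I R$ is $\mathfrak m$-primary, and the local/global question of generators is settled by Cartan's Theorems A and B on the Stein domain $\Omega$. It therefore suffices to prove the analogous statement for an $\mathfrak m$-primary ideal $\I R$ in $R$, which is Cohen-Macaulay with infinite residue field $\C$.

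The central input is the Northcott--Rees theory of reductions: since the residue field is infinite, $\I R$ admits a \emph{minimal reduction} $\J \subseteq \I R$ generated by exactly $n$ elements and satisfying $\J\cdot(\I R)^k = (\I R)^{k+1}$ for all $k$ large. Comparing Hilbert--Samuel polynomials term by term yields $e(\J) = e(\I R)$. The $n$ generators of $\J$ form a system of parameters (as $V(\J) \subseteq \{a\}$ and $\dim R = n$), and hence a regular sequence by the Cohen--Macaulay property of $R$. The Koszul complex on this regular sequence is a free resolution of $R/\J$; from it one computes, by induction on $n$ or by a direct reading, the classical identity $e(\J) = \ell(R/\J)$.

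The inclusion $\J \subseteq \I R$ induces a surjection $R/\J \twoheadrightarrow R/\I R$, so chaining the identities above gives
$$
e(\I) \;=\; e(\J) \;=\; \ell(R/\J) \;\ge\; \ell(R/\I R) \;=\; \ell(\I),
$$
which is the inequality claimed. For the equality case: if $e(\I) = \ell(\I)$ then $\ell(R/\J) = \ell(R/\I R)$, so the surjection $R/\J \twoheadrightarrow R/\I R$ between finite-length modules of equal length is an isomorphism; thus $\J = \I R$ and $\I$ is a complete intersection. Conversely, if $\I$ is already a complete intersection its $n$ generators form a system of parameters, and (by Cohen--Macaulayness) a regular sequence, so the Koszul computation delivers $e(\I) = \ell(\I)$ at once.

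The step I expect to be the main obstacle is the Northcott--Rees existence of a minimal reduction with exactly $n$ generators: this requires a generic-position argument in the fiber cone $\bigoplus_{k \ge 0} (\I R)^k / \mathfrak m (\I R)^k$, exploiting infinity of $R/\mathfrak m = \C$ to pick elements whose images avoid finitely many proper linear subspaces. Everything else --- reducing to the local ring, the multiplicativity $e(\J) = e(\I R)$ for reductions, the Koszul computation, and the length argument in the equality case --- is routine bookkeeping or textbook material.
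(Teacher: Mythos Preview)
Your argument is correct and is essentially the classical proof one finds in Zariski--Samuel. Note, however, that the paper does not give its own proof of this proposition: it is stated with a direct citation to \cite[Ch.~VIII, Theorem~23]{Za-Sa} and used as a black box. So there is no ``paper's proof'' to compare against beyond the reference itself; your sketch simply fills in the standard reduction-to-local-ring, Northcott--Rees minimal reduction, and Cohen--Macaulay/Koszul computation that underlies the cited theorem.
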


The next theorem is our main general result.
Its converse direction 
shows that the second conclusion of Theorem \ref{thmgci} 
can be deduced from hypotheses about the limit ideal only. 

\begin{theorem}
\label{thmiff}
Let  $\mathcal I_\eps= \mathcal I(S_\eps)$, 
where $S_\eps$ is a set of $N$ points all tending to $0$ and
assume that
$\lim_{\eps\to 0} \mathcal I_\eps = \mathcal I$.
Then $(G_{\mathcal I_\eps})$ converges to $G_{\mathcal I}$ locally
uniformly on $\Omega \setminus \{0\}$ if and only if 
$\mathcal I$ is a complete intersection ideal.
\end{theorem}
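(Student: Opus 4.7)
The plan is to prove the two implications separately: for the ``only if'' direction, match Monge--Amp\`ere masses at the origin; for the converse, construct maps satisfying the Uniform Complete Intersection Condition of Definition \ref{defgci} and invoke Theorem \ref{thmgci}.

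For ``only if'', assume $G_{\mathcal I_\eps}\to G_{\mathcal I}$ locally uniformly on $\Omega\setminus\{0\}$. The Monge--Amp\`ere measure $(dd^c G_{\mathcal I_\eps})^n$ is the sum of Dirac masses of weight $1$ at the $N$ simple poles, so its total mass is $N$. Passing to the limit, using continuity of $(dd^c\cdot)^n$ on families uniformly bounded away from the degenerating singular set $\{0\}$ plus the fact that $S_\eps$ concentrates at $0$, one gets that the residual Monge--Amp\`ere mass of $G_{\mathcal I}$ at $0$ equals $N$. By the known identification of this residual mass with the Hilbert--Samuel multiplicity of $\mathcal I$ (Demailly, Rashkovskii), this gives $e(\mathcal I)=N$. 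Since $\ell(\mathcal I_\eps)=N$ for every $\eps>0$ and the length is preserved under convergence of ideals (a basis of $\mathcal O/\mathcal I$ lifts to one of $\mathcal O/\mathcal I_\eps$ for small $\eps$, and conversely any dependence in the limit is a limit of dependences), we also have $\ell(\mathcal I)=N$. Proposition \ref{leqe} then forces $\mathcal I$ to be a complete intersection.

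For ``if'', assume $\mathcal I$ is a complete intersection and pick generators $\Psi_0^1,\dots,\Psi_0^n\in\mathcal O(\Omega)$; after possibly replacing $\Omega$ by a slightly smaller hyperconvex neighborhood of $0$, we may suppose the $\Psi_0^j$ extend holomorphically to a neighborhood of $\overline\Omega$ with $\Psi_0$ proper on $\Omega$. The hypothesis $\lim_\eps\mathcal I_\eps=\mathcal I$ supplies $\Psi_\eps^j\in\mathcal I_\eps$ converging locally uniformly to $\Psi_0^j$; since each element of $\mathcal I_\eps=\mathcal I(S_\eps)$ vanishes on $S_\eps$, we have $S_\eps\subset\Psi_\eps^{-1}(0)$. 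The local intersection multiplicity of $\Psi_0$ at $0$ equals $\ell(\mathcal I)=N$; a several-variable Rouch\'e argument on a small sphere $\partial B(0,r)$ where $\Psi_0$ does not vanish then shows that, for $\eps$ small, $\Psi_\eps$ has exactly $N$ zeros counted with multiplicity in $B(0,r)$ and none elsewhere in $\Omega$. Since $|S_\eps|=N$, those zeros are exactly $S_\eps$ and each is simple, so the Jacobian of $\Psi_\eps$ is invertible at every $a_j^\eps$ and the logarithmic comparison $\bigl|\log\|\Psi_\eps(z)\|-\log\|z-a_j^\eps\|\bigr|\le C(\eps)$ holds near each pole. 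Hence $(\mathcal I_\eps)$ satisfies the Uniform Complete Intersection Condition, and Theorem \ref{thmgci} delivers $G_{\mathcal I_\eps}\to G_{\mathcal I}$ locally uniformly on $\Omega\setminus\{0\}$.

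The main obstacle is the Monge--Amp\`ere mass computation in the forward direction: $G_{\mathcal I_\eps}$ is unbounded at each pole, so the standard continuity of $(dd^c\cdot)^n$ for locally bounded sequences does not apply directly, and one must carefully track how the total mass $N$ sitting on $S_\eps$ survives in the limit. The argument should proceed by working on $\Omega\setminus B(0,r)$, where everything is locally bounded away from the limit singular set, and using Stokes' theorem on $\partial B(0,r)$ to transfer mass across the boundary, then letting $r\to 0$. A secondary technicality, for the converse direction, is the extension/properness requirement on $\Psi_0$; replacing $\Omega$ by a suitable plurisubharmonic sub-level set on which $\Psi_0$ is proper should handle this without affecting the behavior of $G_{\mathcal I}$ near $0$.
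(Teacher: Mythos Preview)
Your overall strategy matches the paper's in both directions: compare Monge--Amp\`ere masses for ``only if'', and approximate generators plus Rouch\'e plus Theorem~\ref{thmgci} for ``if''. The converse direction is essentially identical to the paper's argument (the paper phrases the coincidence $\Psi_\eps^{-1}(0)=S_\eps$ as the equality of ideals $\mathcal I_{\Psi_\eps}=\mathcal I_\eps$ via the length count, while you argue that each zero is simple; both are fine).

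For the ``only if'' direction, however, you are working harder than necessary, and this is exactly the ``main obstacle'' you flag. You try to prove that the Monge--Amp\`ere mass of the limit \emph{equals} $N$, which would require a continuity statement for $(dd^c\cdot)^n$ across singularities. The paper avoids this entirely: it uses only the \emph{upper} semicontinuity inequality $(dd^c g)^n(\Omega)\le N$ (this is Proposition~\ref{massdecr}, resting on Cegrell's lemma for the class $\mathcal F(\Omega)$). Since by hypothesis $g=G_{\mathcal I}$ and $(dd^c G_{\mathcal I})^n(\Omega)=e(\mathcal I)$ (Proposition~\ref{massgreen}), this gives $e(\mathcal I)\le N$. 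Combined with $\ell(\mathcal I)=N$ (Corollary~\ref{lenglim}) and the general inequality $e(\mathcal I)\ge\ell(\mathcal I)$ (Proposition~\ref{leqe}), one gets $e(\mathcal I)=\ell(\mathcal I)$ for free, hence complete intersection. So your Stokes-on-$\partial B(0,r)$ program is unnecessary: only one inequality on the mass is needed, and that inequality is already available in the paper.
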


This theorem is proved in Section \ref{secproofs}.

\subsection{Examples}

Note that in the case where $\mathcal I$ requires more than $n$
generators, the sequence  $(G_{\mathcal I_\eps})$ may converge,
to a limit which is not the Green function of an ideal.  The
first case when this can occur is when $n=2$ and $N=3$.

We fix some notations. As usual, for $a \in \Omega$, 
$$\frak M_a := \mathcal I_{\{a\}} = \langle z_1-a_1, \dots, z_n-a_n \rangle. $$ 
Recall that 
$$\ell (\frak M_a) =1, \quad
\ell (\frak M_a^p ) = \left( 
\begin{array}{c} 
n+p-1\\ n 
\end{array} \right), p\ge 1.
$$
If $\mathcal I$ is the ideal of all functions vanishing at $N$ distinct
points $a_1, \dots, a_N$, then $\ell(\mathcal I) =N$. 

For $\eps \in \C$, let $a_i^\eps\in \Om \subset \C^2$,  $1\le i \le 3$, be three
distinct points,  
$S_\eps := \{ a_1^\eps, a_2^\eps, a_3^\eps \}$, with 
$\lim_{\eps\to0} a_i^\eps=0$,  $1\le i \le 3$;
$G_\eps:= G_{S_\eps}$, $\I_\eps:= \I(S_\eps)$.

Let $v_{ i}^\eps := [a_j^\eps-a_k^\eps] \in \P^1$, where $\{ i,j,k\}=\{1,2,3\}$, 
$j,k \neq i$.

\begin{theorem}
\label{gen3pt}
Assume that there exists $A\subset \C\setminus\{0\}$ 
with $0\in \bar A$ such that 
$v_{ i} = \lim_{A\ni\eps\to0} v_{ i}^\eps$. 
Let $\tilde v_{ i} \in \C^2$, $1\le i \le 3$,
be such that $\|\tilde v_{ i}\|=1$ and $[\tilde v_{ i}]= v_{ i}$.
\begin{enumerate}
\item[(i)]
If there exist $i, j$ such that 
 $v_{i} \neq v_{ j}$,  then $\lim_{\eps\to 0} \mathcal I_\eps = 
\frak M_0^2$, 
and 
$ G_{\eps}\to  g$ locally uniformly on $\Om\setminus \{0\}$
as $A\ni\eps\to 0$, where
$g \in PSH(\Om)$ is  extremal on $\Om\setminus \{0\}$, 
$g(z) \le \frac32 \log \|z\| + O(1)$ as $z\to 0$, 
and for $\zeta \in \C$ we have
$$
g(\zeta v) 
=\begin{cases} 2 \log |\zeta| + O(1), &v=\tilde v_{ i}, \ 1\le i \le 3,\\
\frac32 \log |\zeta| + O(1),  & v \in \C^2
\setminus \cup_{1\le i \le 3} \C \tilde v_{ i} .
\end{cases}
$$
\item[(ii)]
There exists an infinite set $\frak A$ of families 
$a:=\left\{\{a_1^\eps,a_2^\eps,a_3^\eps\}, \eps \in \C\right\}$, such that for
any $a \in \frak A$, 
$v_{1} =v_{ 2}=v_{ 3}=[1:0]$,  
$\lim_{\eps\to 0} \mathcal I_\eps = \I^a$ 
and $\lim_{\eps\to 0} G_\eps = g^a$,
where the ideals $\I^a$ (respectively the functions
$g^a$) are distinct for distinct values of $a$.
\end{enumerate}
\end{theorem}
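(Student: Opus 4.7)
\textbf{Proof plan for Theorem \ref{gen3pt}.}

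\emph{Convergence of ideals in (i).} I first prove $\lim\limits_{A\ni\eps\to 0}\I_\eps = \frak M_0^2$ by verifying both inclusions of Definition \ref{convid}. For $\frak M_0^2 \subset \liminf\I_\eps$, given $f\in\frak M_0^2$, Lagrange interpolation in the space of affine polynomials on the three points $a_i^\eps$ produces a unique affine $p_\eps$ with $p_\eps(a_i^\eps)=f(a_i^\eps)$; the $3\times 3$ determinant controlling the system is (up to a constant) the signed area of the triangle $a_1^\eps a_2^\eps a_3^\eps$, which is asymptotically nondegenerate because at least two edge directions $\tilde v_i$ are linearly independent. Writing $a_i^\eps = \lambda_\eps b_i^\eps$ with $\lambda_\eps\to 0$ and $b_i^\eps$ bounded in general position, and using $|f(a_i^\eps)| = O(\lambda_\eps^2)$, Cramer's rule gives $p_\eps(z)=O(\lambda_\eps)(\lambda_\eps+\|z\|)$, so $f_\eps := f-p_\eps\in\I_\eps$ converges locally uniformly to $f$. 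For $\limsup\I_\eps\subset\frak M_0^2$, a locally uniform limit $f$ of $f_j\in\I_{\eps_j}$ satisfies $f(0)=0$; passing the identities $(f_j(a_i^{\eps_j})-f_j(a_k^{\eps_j}))/\|a_i^{\eps_j}-a_k^{\eps_j}\|=0$ to the limit gives $df(0)\cdot\tilde v_\ell=0$ for each $\ell$, and since two of the $\tilde v_\ell$ span $\C^2$, $df(0)=0$, hence $f\in\frak M_0^2$.

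\emph{Lower bound on $g$ in (i).} Proposition \ref{glimunif} combined with $G_{\frak M_0^2}(z)=2\log\|z\|+O(1)$ already yields $g\ge 2\log\|z\|+O(1)$, which is the lower bound along the exceptional rays $\C\tilde v_i$. For generic directions we sharpen this via explicit PSH competitors. Let $\ell_i^\eps$ be the affine form whose zero set is the complex line through $a_j^\eps,a_k^\eps$ (with $\{i,j,k\}=\{1,2,3\}$), normalized so that $\sup_{\overline\Om}|\ell_i^\eps|=1$. Each $a_m^\eps$ lies on exactly two of the three lines, so
\[
u_\eps:=\tfrac{1}{2}\log|\ell_1^\eps\ell_2^\eps\ell_3^\eps|
\]
is nonpositive PSH with logarithmic weight $1$ at each pole, hence $u_\eps\le G_\eps+O(1)$. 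As $\eps\to 0$, $\ell_i^\eps$ converges to a linear form $\ell_i^0$ with kernel $\C\tilde v_i$, giving $u_\eps(\zeta v)\to\tfrac{3}{2}\log|\zeta|+O(1)$ for every $v\notin\bigcup_i\C\tilde v_i$.

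\emph{Upper bound on $g$ -- the main obstacle.} We need the global estimate $g(z)\le\tfrac{3}{2}\log\|z\|+O(1)$ together with the sharper $g(\zeta\tilde v_i)\le 2\log|\zeta|+O(1)$. Upper bounds on $G_\eps$ come from the Poletsky--L\'arusson--Sigurdsson disk formula \cite{La-Si}: for any $\phi\in\O(\overline\D,\Om)$ with $\phi(0)=z$, one has $G_\eps(z)\le\sum_{\xi\in\phi^{-1}(S_\eps)}\log|\xi|$ (counted with multiplicity). For a generic direction $v$ and $z=\zeta v$, construct a quadratic disk $\phi(\xi)=z+\xi w_1+\xi^2 w_2$ whose image is a parabolic arc through all three $a_i^\eps$; solving for $w_1,w_2$ in terms of $z$ and the pole positions gives three preimages $\xi_1,\xi_2,\xi_3$ of modulus $\asymp|\zeta|^{1/2}$, and hence $\sum_i\log|\xi_i|=\tfrac{3}{2}\log|\zeta|+O(1)$. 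Along $v=\tilde v_i$, an affine disk in direction $\tilde v_i$ does strictly better: in the limit it lies on a line through all three coalescing poles, catching two of them at parameters of modulus $\asymp|\zeta|$ and giving the contribution $2\log|\zeta|+O(1)$. The delicate technical point, where the real work lies, is to keep these disks inside $\Om$ and to obtain constants independent of $\eps$ as the triangle $a_1^\eps a_2^\eps a_3^\eps$ shrinks to $0$; uniform modulus bounds on the constructed disks, together with maximality of $g$ away from $0$ (Corollary \ref{coridineq}), then promote the directional estimates to the claimed global upper bound.

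\emph{Part (ii).} The family $\frak A$ is produced by prescribing different orders of coalescence transverse to the common direction $[1:0]$. A representative member, one for each integer $k\ge 2$, is
\[
a_1^\eps=(-\eps,\,0),\qquad a_2^\eps=(\eps,\,0),\qquad a_3^\eps=(0,\,\eps^k),
\]
for which each $v_i^\eps\to[1:0]$. A Taylor-coefficient analysis of the constraints $f(a_i^\eps)=0$ reveals that the limit ideal $\I^{(k)}$ always contains $z_2^2$ and $z_1z_2$, but that its third independent generator couples $z_1^2$ with $z_2$ via a relation whose weight depends explicitly on $k$; thus the $\I^{(k)}$ are pairwise distinct, all of length $3$ with Hilbert--Samuel multiplicity $4$. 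None is a complete intersection, so Theorem \ref{thmiff} already guarantees $\lim G_\eps^{(k)}\ne G_{\I^{(k)}}$ whenever the limit exists (existence follows from the uniform PSH bounds of part (i), possibly after extracting a subsequence). Distinctness of the $g^{(k)}$ is witnessed by restricting to the test disk $\xi\mapsto(\xi,\xi^k)$, which passes through $a_3^\eps$ precisely at $\xi=\eps$ and through the line $\{z_2=0\}$ at $\xi=0$: pulling back $G_\eps^{(k)}$ gives a subharmonic function of $\xi$ whose leading asymptotics depend on $k$, so the same holds for $g^{(k)}$.
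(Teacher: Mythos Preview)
Your part (ii) is incorrect. For $a_1^\eps=(-\eps,0)$, $a_2^\eps=(\eps,0)$, $a_3^\eps=(0,\eps^k)$ with $k\ge 3$, the function $\eps^{k-2}(z_1^2-\eps^2)+z_2$ lies in $\I_\eps$ and tends to $z_2$; together with $z_1(z_1^2-\eps^2)\to z_1^3$ and the length constraints of Lemmas \ref{lenglimsup}--\ref{lengliminf}, this forces $\lim\I_\eps=\langle z_2,z_1^3\rangle$ for \emph{every} $k\ge 3$. This is a complete intersection of length $3$, so Theorem \ref{thmgci} gives the \emph{same} limiting Green function for all such $k$. Your claims that the $\I^{(k)}$ are pairwise distinct, have multiplicity $4$, and are not complete intersections all fail. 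The paper instead takes $S_\eps=\{(0,0),(\eps,\alpha\eps^2),(-\eps,\alpha\eps^2)\}$, $\alpha\in\C$: here $\I_\eps=\langle z_1(z_1^2-\eps^2),\,z_2-\alpha z_1^2\rangle$ is a complete intersection from the start, the limit $\langle z_1^3,\,z_2-\alpha z_1^2\rangle$ depends on $\alpha$, and Theorem \ref{thmgci} yields convergence to a Green function equivalent to $\max(3\log|z_1|,\log|z_2-\alpha z_1^2|)$, which varies with $\alpha$.

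In part (i) your treatment of the ideal limit and of the lower bound $u_\eps=\tfrac12\log|\ell_1^\eps\ell_2^\eps\ell_3^\eps|$ is correct and close to the paper's. The upper-bound sketch, however, has a circularity: you invoke ``maximality of $g$ away from $0$ (Corollary \ref{coridineq})'' to promote directional estimates to a global one, but that corollary \emph{assumes} the locally uniform convergence you are trying to prove. What is missing is the mechanism that turns two-sided $O(1)$ bounds into actual convergence. The paper supplies this via Lemma \ref{boundcv}, which requires $|G_\eps(z)-G(z)|\le C$ with a single $C$ on full spheres $\|z\|=\delta$, not merely along rays. To obtain such uniform two-sided control, the paper first reduces (by linear maps and $o(1)$ perturbations, Lemmas \ref{lincv} and \ref{perturb}) to two model configurations on $\D^2$, then matches $G_\eps$ above and below to an explicit piecewise function $H$ or $F$; the upper estimate uses your quadratic disks in the generic region, separate disks near the axes, and a three-circle interpolation along linear disks $\zeta\mapsto(\zeta,\tfrac{z_2}{z_1}\zeta)$ to glue the two regimes. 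Your affine-disk idea along $\tilde v_i$ is also not right at finite $\eps$: a line through $\zeta\tilde v_i$ in direction $\tilde v_i$ need not hit any of the three poles.
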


This theorem is proved in Section \ref{ex}.

{\bf Remark 1.}
 In Section \ref{ex}, we also 
 give a general picture of the convergence (or not) of Green
functions for two poles tending to the origin, and
in the case of three poles,
after Proposition \ref{model},  
more detailed descriptions of the possible functions $g$, which
depend on whether the $v_i$ are all distinct or not.

{\bf Remark 2.}
Since $\P^1$ is compact, any family $\{v_i^\eps\}_{\eps \in E}$
admits a convergent subfamily. For a $E'\subset E$ such that each
$\{v_i^\eps\}_{\eps \in E'}$ is convergent, one of the above cases
will apply.  If different subfamilies lead to distinct limits, 
the original family $\{G_\eps, \eps \in E\}$ is not convergent.  In particular, if $\mathcal V$
denotes the cluster set of the sets $\{v_i^\eps, 1\le i \le 3\}$, 
and if we have $\# \mathcal V \ge 4$, the
family $\{G_\eps, \eps \in E\}$ is not convergent.

\subsection{Open questions}

When dealing with plurisubharmonic functions, it is natural to consider
``weak" convergence, in the $L^1_{\rm{loc}}$ sense. This is in fact enough to obtain first estimates of the mass of a limit of Green functions.

\begin{prop}
\label{massdecr}
Let $S_\eps$ be a set of $N$ points all tending to $0$.
Suppose that 
$g=\lim_{\eps\to0} G_{S_\eps}$ exists in $L^1_{loc}$. 
Then
$$
(dd^c g)^n (\Omega) \le \liminf_{\eps\to0} (dd^c G_{S_\eps})^n (\Omega) = N.
$$
\end{prop}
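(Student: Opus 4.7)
The equality on the right is standard: for the Green function with $N$ distinct simple logarithmic poles, $(dd^c G_{S_\eps})^n = \sum_{j=1}^N \delta_{a_j^\eps}$, so the total mass on $\Omega$ is $N$, independent of $\eps$.

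For the inequality on the left, my plan is to reduce to bounded PSH functions by truncation and then apply Bedford--Taylor continuity. Fix $M>0$ and set
\[
g_M := \max(g,-M),\qquad G_{\eps,M} := \max(G_{S_\eps},-M),
\]
both bounded PSH with values in $[-M,0]$. Continuity of $\max(\cdot,-M)$ together with the $L^1_{\rm loc}$ convergence $G_{S_\eps}\to g$ gives $G_{\eps,M}\to g_M$ in $L^1_{\rm loc}$, and since the family $\{G_{\eps,M}\}_\eps$ is uniformly bounded, Bedford--Taylor's continuity theorem yields weak convergence of Monge--Amp\`ere measures $(dd^c G_{\eps,M})^n \to (dd^c g_M)^n$ on $\Omega$ as $\eps\to 0$.

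Next, I would observe that $(dd^c G_{\eps,M})^n(\Omega)=N$ for every small $\eps$ and every $M>0$: this is the usual mass-conservation fact for a Monge--Amp\`ere truncation of a PSH function vanishing on the boundary of a hyperconvex domain, because $G_{\eps,M}$ agrees with $G_{S_\eps}$ outside the compact set $\{G_{S_\eps}\le -M\}\Subset \Omega$, and its Monge--Amp\`ere measure sits on the separating level set $\{G_{S_\eps}=-M\}$, enclosing all $N$ unit masses of $G_{S_\eps}$. Combined with the lower semicontinuity of mass on the open set $\Omega$ under weak convergence of positive Radon measures (the Portmanteau direction), this gives $(dd^c g_M)^n(\Omega)\le N$ for every $M$.

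To finish, I would let $M\to\infty$. Since $g_M\downarrow g$, Bedford--Taylor's monotone convergence theorem produces $(dd^c g_M)^n\to (dd^c g)^n$ weakly on $\Omega$, and one more invocation of lower semicontinuity yields $(dd^c g)^n(\Omega)\le\liminf_{M\to\infty} (dd^c g_M)^n(\Omega)\le N$. The main technical difficulty is the mass bookkeeping in the two vague limits $\eps\to 0$ (with $M$ fixed) and $M\to\infty$: in each case one must rule out that Monge--Amp\`ere mass escapes to $\partial\Omega$. This is controlled by the hyperconvexity of $\Omega$, which forces all the functions $G_{\eps,M}$ and $g_M$ to vanish at the boundary, together with the fact that $S_\eps$ stays in a fixed compact subset of $\Omega$, so that the relevant Monge--Amp\`ere masses actually remain concentrated in a small neighborhood of $0$.
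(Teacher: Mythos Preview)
Your outline is natural, but Step 2 contains a genuine gap. Bedford--Taylor continuity of $(dd^c\,\cdot\,)^n$ holds under monotone convergence, locally uniform convergence, or convergence in capacity, but \emph{not} under $L^1_{\rm loc}$ convergence of uniformly bounded plurisubharmonic functions: Cegrell constructed explicit counterexamples (uniformly bounded $u_j\to u$ in $L^1_{\rm loc}$ with $(dd^c u_j)^n\not\to(dd^c u)^n$). So from $G_{\eps,M}\to g_M$ in $L^1_{\rm loc}$ alone you cannot conclude $(dd^c G_{\eps,M})^n\to(dd^c g_M)^n$ weakly, and the hypothesis of the proposition gives you nothing stronger than $L^1_{\rm loc}$. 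A second, smaller issue is Step~5: Bedford--Taylor's monotone convergence theorem requires the limit to be locally bounded, which $g$ is not (it blows down at~$0$); making sense of $(dd^c g)^n$ and of the limit $(dd^c g_M)^n\to(dd^c g)^n$ already requires the Cegrell class $\mathcal F(\Omega)$.

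The paper sidesteps both difficulties by invoking directly a lemma of Cegrell \cite[Lemma~2.1]{Ce3}: if $u_j,u\in\mathcal F(\Omega)$ and $u_j\to u$ weakly, then $\limsup_j\int_\Omega w\,(dd^c u_j)^n\le\int_\Omega w\,(dd^c u)^n$ for every $w\in PSH_-(\Omega)$. Testing against a negative plurisubharmonic $w$ with $-1\le w$ and $w\equiv-1$ on a given compact $K\Subset\Omega$ yields $(dd^c g)^n(K)\le N$, hence the claim. The only verification left is that $g$ and $G_{S_\eps}$ lie in $\mathcal F(\Omega)$, and this is where the uniform lower bound $G_{S_\eps}\ge(N+\delta)G_0$ of Lemma~\ref{roughest} is used. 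Your truncation idea is essentially how one \emph{defines} $(dd^c u)^n$ for $u\in\mathcal F$, but the passage $\eps\to0$ at fixed $M$ genuinely needs the semicontinuity result above rather than a continuity theorem.
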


The proof of this useful fact, which relies mostly on a result in \cite{Ce3}, will be given in Section \ref{gineq}, before Lemma \ref{glb}.

However, to obtain a better behavior we would like to make sure, in particular, that the Monge-Amp\`ere mass of the limit function is concentrated at the origin.  This happens when convergence
is locally uniform on $\Omega \setminus \{0\}$ (Corollary
\ref{coridineq}).
Given the rigid nature of Green functions, is it possible to find any
situation where this better kind of convergence is {\it not} realized? 

\vskip.3cm

Theorem \ref{gen3pt} and the following remarks
 show that the same ideal $\frak M_0^2$
can be obtained as limit of many distinct families $\mathcal I_\eps$
of ideals based on three points, which give rise to distinct families of Green functions with different limits $g$, all of which, however, share the property that 
$(dd^c g)^2 = 3 \delta_0= \ell (\frak M_0^2) \delta_{V(\frak M_0^2)}$.

Is there any natural way to associate to an ideal $\mathcal I$
such that $V(\mathcal I)=\{0\}$ a plurisubharmonic function
$h_{\mathcal I}$ such that $(dd^c h_{\I})^n = \ell(\mathcal I)
\delta_0$ ? 

Any such correspondence, however, is not likely to be one-to-one ; 
for instance, if we take any two independent linear forms 
$\psi_1, \psi_2$ on $\C^2$, then in general $\langle \psi_1^2, \psi_2^2 \rangle
\neq \langle z_1^2, z_2^2 \rangle$; however they have the same Green functions,
which by Theorem \ref{thmgci} are the limits of Green functions of 
certain families of 4-point sets. 

\subsection{Acknowledgements \& origins of the question}

It is known that the Green function is smaller than another holomorphic invariant, the Lempert function \cite{Co}, \cite{CarlWieg}. 
When looking for cases where this inequality is strict,
for single poles, we had to consider situations where
$S=S_\eps$ depended on a parameter, and 
the poles of $S$ were tending to a same point as $\eps \to 0$ \cite{Th-Tr}.
This led us to consider $\lim_{\eps\to0} G_{S_\eps}$. 

The work on this paper spanned many years, from the first time 
that Nguyen Van Trao asked the last-named author about the limit
of a Green function with three poles, to the present. 
Over that period, we have benefitted from many conversations with kind
and patient colleagues about one aspect or another of this work. Although 
writing down a list runs the risk of omission, we'd like to thank (from A to Z) Eric Amar, Mats Andersson, Eric Bedford, Jean-Paul Calvi, 
Urban Cegrell, Jean-Pierre Demailly, Philippe Eyssidieux, Vincent Guedj, 
Nguyen Quang Dieu, Mikael Passare, Evgeny Poletsky, Mark Spivakovsky, Elizabeth Wulcan, Alain Yger, and Ahmed Zeriahi. 

\section{Convergence of ideals}
\label{conv.of.id}

\begin{lemma}
\label{lenglimsup} 
If $\ell(\mathcal I_{\eps})\le N$ for all $\eps \in A$, then 
$\ell (\limsup\limits_{A\ni\eps\to 0} \mathcal I_\eps) \le N$.
\end{lemma}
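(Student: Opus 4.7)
The plan is to argue by contradiction, using a compactness argument on normalized coefficients of linear combinations.

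Suppose, for contradiction, that $\ell(\mathcal{J}) \ge N+1$, where $\mathcal{J} := \limsup_{A \ni \varepsilon \to 0} \mathcal{I}_\varepsilon$. Then we can pick $g_0, g_1, \dots, g_N \in \mathcal{O}(\Omega)$ whose classes in $\mathcal{O}/\mathcal{J}$ are linearly independent over $\mathbb{C}$. In particular, no nontrivial linear combination $\sum_{i=0}^N c_i g_i$ lies in $\mathcal{J}$.

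On the other hand, for every $\varepsilon \in A$, since $\dim \mathcal{O}/\mathcal{I}_\varepsilon \le N$, the $N+1$ classes $[g_0], \dots, [g_N]$ in $\mathcal{O}/\mathcal{I}_\varepsilon$ must be linearly dependent. Hence there exist $c_0^\varepsilon, \dots, c_N^\varepsilon \in \mathbb{C}$, not all zero, with
$$f_\varepsilon := \sum_{i=0}^N c_i^\varepsilon g_i \in \mathcal{I}_\varepsilon.$$
After renormalization we may assume $\max_i |c_i^\varepsilon| = 1$.

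By compactness of the unit sphere in $\mathbb{C}^{N+1}$, we can extract a sequence $\varepsilon_j \to 0$ in $A$ such that $c_i^{\varepsilon_j} \to c_i$ for every $i$, with $\max_i |c_i| = 1$. The corresponding $f_{\varepsilon_j} \in \mathcal{I}_{\varepsilon_j}$ then converge locally uniformly on $\Omega$ to $f := \sum_i c_i g_i$, because each $g_i$ is fixed and the coefficients converge. By Definition \ref{convid}, $f \in \mathcal{J}$. This contradicts the linear independence of the classes $[g_i]$ in $\mathcal{O}/\mathcal{J}$, since $(c_0, \dots, c_N)$ is a nonzero vector.

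The only subtle point is the normalization step, which requires that $f_{\varepsilon_j} \to f$ locally uniformly, not just that the coefficients converge; since the $g_i$ are fixed holomorphic functions on $\Omega$ and the coefficients are bounded, this is automatic. The rest is a straightforward pigeonhole/compactness argument, so I do not anticipate any real obstacle.
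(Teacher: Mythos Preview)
Your proof is correct and follows essentially the same approach as the paper: use that $\dim \mathcal{O}/\mathcal{I}_\eps \le N$ forces a nontrivial dependence among any $N+1$ functions, normalize the coefficient vector to have sup norm $1$, pass to a convergent subsequence by compactness, and conclude that the limit combination lies in $\limsup_{\eps\to 0}\mathcal{I}_\eps$. The paper phrases this as showing directly that any $k>N$ functions are dependent modulo the limsup ideal rather than as a contradiction, but the argument is the same.
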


\begin{proof}
Suppose $k>N$ and $f^1, \dots, f^k \in \mathcal O (\Omega)$.
By the hypothesis, for any $\eps \in A$, there exist
$\lambda_\eps^m \in \C, 1 \le m \le k$, not all zero, such that
$$
\sum_{m=1}^k \lambda_\eps^m f^m = g_\eps \in \mathcal I_\eps.
$$
We can normalize the coefficients $\lambda_\eps^m $ such that
$\max_m |\lambda_\eps^m |=1$, and by passing to a subsequence,
and renumbering the $f^m$ if needed, 
we can assume $\lambda_\eps^1 =1$, $|\lambda_\eps^m |\le 1$ for
$2\le m \le k$.  Then by using the compactness of $\overline \D$,
we can pass to another subsequence $\eps_j$ such that
$\lim_{j\to\infty} \lambda_{\eps_j}^m =  \lambda^m \in \overline \D$,
$2\le m \le k$.  This implies that $\lim g_{\eps_j}=:g$ exists 
(locally uniformly) and
$$
f^1+\sum_{m=2}^k \lambda^m f^m = g \in \limsup\limits_{A\ni\eps\to 0} 
\mathcal I_\eps.
$$
So the system $[f^1], \dots, [f^k]$ is not linearly independent in
$\mathcal O (\Omega) / \limsup\limits_{A\ni\eps\to 0} \mathcal I_\eps $.
\end{proof}

\begin{lemma}
\label{lengliminf}
If for each $\eps \in A$, $\mathcal I_{\eps}$ is an ideal based on $N$ distinct points, all tending to $0$ as $\eps$ tends to $0$, and if 
$\ell(\mathcal I_{\eps})\ge N$ for all $\eps \in A$, then 
$\ell (\liminf\limits_{A\ni\eps\to 0} \mathcal I_\eps) \ge N$.
\end{lemma}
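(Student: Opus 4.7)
The plan is to reduce to showing $\ell(\limsup \mathcal I_\eps) \ge N$: since $\liminf \mathcal I_\eps \subset \limsup \mathcal I_\eps$ as ideals, one has $\ell(\liminf \mathcal I_\eps) \ge \ell(\limsup \mathcal I_\eps)$, so this suffices. I would then realize $\mathcal O(\Omega)/\limsup \mathcal I_\eps$ as containing an $N$-dimensional image of a fixed polynomial subspace.

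Let $P = P_{N-1}$ denote the space of polynomials of degree at most $N-1$ on $\C^n$, of dimension $D = \binom{n+N-1}{n}$. Since the $a_i^\eps$ are distinct, the restriction map $R_\eps : P \to \C^N$, $p \mapsto (p(a_i^\eps))_i$, is surjective (via Lagrange interpolation applied to a linear form separating the poles), hence $W_\eps := P \cap \mathcal I_\eps = \ker R_\eps$ has codimension $N$ in $P$. By compactness of $\mathrm{Gr}(D-N, P)$, I would pass to a subsequence $\eps_k \to 0$ along which $W_{\eps_k} \to W^*$, with $\dim W^* = D - N$.

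The crucial step is to show $W^* = P \cap \limsup \mathcal I_\eps$. The inclusion $W^* \subset P \cap \limsup \mathcal I_\eps$ is immediate from Grassmannian convergence: every $p \in W^*$ is a limit of elements $p_k \in W_{\eps_k} \subset \mathcal I_{\eps_k}$. For the reverse, given $p \in P$ with $p = \lim f_{\eps_k}$, $f_{\eps_k} \in \mathcal I_{\eps_k}$, I would set $p_k := T_0^{N-1} f_{\eps_k} \in P$; then $p_k \to T_0^{N-1} p = p$, and since $f_{\eps_k} - p_k$ vanishes to order $\ge N$ at $0$, the residuals $\delta_i^k := p_k(a_i^{\eps_k}) = -(f_{\eps_k} - p_k)(a_i^{\eps_k})$ satisfy $|\delta_i^k| = O(\|a_i^{\eps_k}\|^N)$. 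Subtracting a polynomial correction $h_k$ with $h_k(a_i^{\eps_k}) = \delta_i^k$ gives $\tilde p_k := p_k - h_k \in W_{\eps_k}$, and provided $h_k \to 0$ uniformly, the convergence $\tilde p_k \to p$ places $p$ in $W^*$.

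Once $W^* = P \cap \limsup \mathcal I_\eps$, we have $\dim P/(P \cap \limsup \mathcal I_\eps) = N$, and the injection $P/(P \cap \limsup \mathcal I_\eps) \hookrightarrow \mathcal O(\Omega)/\limsup \mathcal I_\eps$ yields $\ell(\limsup \mathcal I_\eps) \ge N$, hence the lemma. The main obstacle is the convergence $h_k \to 0$: a degree $N-1$ interpolation with data of size $\|a^\eps\|^N$ at the clustering poles need not be small if the poles cluster at multiple scales, since the Lagrange-type coefficients can blow up like $\|a^\eps\|^{-(N-1)}$ or worse. The intended resolution is to enlarge to $P_M$ with $M$ chosen so that $R_\eps$ admits a right inverse on $P_M$ with uniformly bounded norm (yielding $\|h_k\| = O(\|a^\eps\|^N) \to 0$); this does not change the length computation since $V(\limsup \mathcal I_\eps) = \{0\}$ (constructing, for any $z_0 \ne 0$, products $\prod_i \ell_i(z - a_i^\eps) \in \mathcal I_\eps$ with limit nonvanishing at $z_0$), so $\limsup \mathcal I_\eps$ is $\mathfrak M_0$-primary and its length is captured faithfully by any sufficiently high-degree polynomial subspace.
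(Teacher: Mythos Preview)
Your approach is genuinely different from the paper's, but the interpolation step has a real gap that your proposed fix does not close.

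The paper's proof embeds $S_\eps$ in the product set $P_\eps=\pi_1(S_\eps)\times\cdots\times\pi_n(S_\eps)$ and introduces the auxiliary complete intersection ideal $\mathcal J_\eps=\mathcal I(P_\eps)\subset\mathcal I_\eps$, whose limit $\mathcal J$ it computes explicitly via one-variable contour integrals. One then writes $\ell(\mathcal I)=\ell(\mathcal J)-\dim(\mathcal I/\mathcal J)$ with $\mathcal I=\liminf\mathcal I_\eps$, and a scalar-coefficient compactness argument (identical in spirit to the proof of Lemma~\ref{lenglimsup}) bounds $\dim(\mathcal I/\mathcal J)$ by $\dim(\mathcal I_\eps/\mathcal J_\eps)\le\prod_j N_j-N$. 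No interpolation estimate is needed.

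In your argument there is first a minor issue: an element of $P\cap\limsup\mathcal I_\eps$ is a finite \emph{linear combination} of limits taken along possibly different subsequences, not a single limit along your chosen $(\eps_k)$. This is easily repaired by replacing $\limsup\mathcal I_\eps$ throughout by $\mathcal L:=\{\lim_k f_k:f_k\in\mathcal I_{\eps_k}\}$, which still contains $\liminf\mathcal I_\eps$. The substantive gap is the claim that the correction $h_k$ can be chosen with $h_k\to 0$. Your fix---passing to $P_M$ so that $R_\eps$ admits a right inverse of uniformly bounded norm---is false for every $M$: already in $\C$, any $p\in P_M$ with $p(0)=0$ and $p(\eps)=1$ satisfies $\|p\|_{\overline\D}\ge(M|\eps|)^{-1}$ by Cauchy estimates on its coefficients, so the right-inverse norm necessarily blows up as the points cluster. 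What would actually be needed is that data of the special form $\delta_i^k=g_k(a_i^{\eps_k})$, with $g_k$ uniformly bounded and vanishing to order $N$ at $0$, can be interpolated by polynomials tending to $0$; this exploits not merely the size $|\delta_i^k|\le C\|a_i\|^N$ but the differential relations among the $\delta_i^k$ (for instance, $\delta_i^k-\delta_j^k$ is controlled by $\|a_i-a_j\|$ rather than by $\max\|a_i\|^N$). In dimension one this follows from the contour-integral representation of divided differences, but in $\C^n$ with $n\ge 2$ and points clustering at incommensurable scales and directions, carrying it out is essentially as hard as the original problem, and the natural way to do it is precisely the product-set reduction the paper employs.
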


\begin{proof}
Let $\pi_j$ denote the projection to the $j$-th coordinate axis.
Let $V(\mathcal I_\eps)= \{a_1^\eps, \dots, a_N^\eps\}$.

We can split $A$ into a finite union of sets $A_k$ such that 
for each $k$, $j \in \{1,\dots,n\}$, 
$\#\pi_j(\{a_1^\eps, \dots, a_N^\eps\})=N_{k,j}$ is independent of $\eps$.
The sets $A_k$ which don't have the origin in their closure do not
concern us; let us now consider one that does, which we will denote again
by $A$, and write $N_j=N_{k,j}$.  Then let 
$$
P_\eps := \pi_1(\{a_1^\eps, \dots, a_N^\eps\}) \times \cdots
\times \pi_n(\{a_1^\eps, \dots, a_N^\eps\})  \mbox{ (cartesian product)}
$$
and $\mathcal J_\eps$ the set of all $f\in \O(\Omega)$ vanishing
on $P_\eps$.  It is easy to see that $\ell (\mathcal J_\eps)
= \#P_\eps = \prod_{j=1}^n N_j \le N^n$.

\begin{lemma}
\label{prodlem}
Under the hypotheses of Lemma \ref{lengliminf},
$$
\lim_{\eps\to0} \mathcal J_\eps = \mathcal J
:= \left\{ f \in \O(\Omega) : \frac{\partial^{k_1+\dots+k_n} f}
{\partial z_1^{k_1} \dots \partial z_n^{k_n}}(0)=0,
1 \le j \le n, 0\le k_j \le N_j-1
\right\}. 
$$
\end{lemma}

Lemma \ref{prodlem} will be proved below. Note that it is a special
case of the first statement in Theorem \ref{thmgci}.

Let $\mathcal I := \liminf\limits_{A\ni\eps\to 0} \mathcal I_\eps$. 
Since $\mathcal J_\eps \subset \mathcal I_\eps$, 
it is easy to see that $\mathcal J\subset \mathcal I$.

Then
$\ell (\mathcal I) = \ell (\mathcal J) - \dim (\mathcal I/\mathcal J)
= \prod_{j=1}^n N_j - \dim (\mathcal I/\mathcal J)$, and
$$N\le \ell (\mathcal I_\eps) = \ell (\mathcal J_\eps) - \dim (\mathcal I_\eps/\mathcal J_\eps) = \prod_{j=1}^n N_j - \dim (\mathcal I_\eps/\mathcal J_\eps). 
$$  
So to prove Lemma \ref{lengliminf}, it will be enough to show that 
$\dim (\mathcal I/\mathcal J) \le \prod_{j=1}^n N_j -N$,
by using the fact that
$\dim (\mathcal I_\eps/\mathcal J_\eps) \le \prod_{j=1}^n N_j -N$.

Let $k > \prod_{j=1}^n N_j -N$ and $f^1, \dots, f^k \in \mathcal I$.
By the definition of $\liminf$, for any $\eps \in A$, there exist
$f^1_\eps, \dots, f^k_\eps \in \mathcal I_\eps$ such that
$\lim_{\eps\to0} f^m_\eps = f^m$, $1\le m \le k$. By the bound on
the dimension of $\mathcal I_\eps/\mathcal J_\eps$, there exists
$g_\eps \in \mathcal J_\eps$ and coefficients $\lambda_\eps^m \in \C, 1 \le m \le k$, not all zero, such that
$$
\sum_{m=1}^k \lambda_\eps^m f^m_\eps = g_\eps .
$$
Now we pass to a subsequence exactly as in the proof of Lemma 
\ref{lenglimsup}, using in addition the convergence of the $f^m_\eps $,
to find 
$$
f^1 + \sum_{m=2}^k \lambda^m f^m = g ,
$$
with $g = \lim g_{\eps_j} \in \limsup\limits_{A\ni\eps\to 0} \mathcal J_\eps = \mathcal J$, since the family $(\mathcal J_\eps )$ converges. This
proves that the system $[f^1], \dots, [f^k]$ is not linearly independent in
$\mathcal I/\mathcal J$.
\end{proof}

\begin{cor}
\label{lenglim}
If $(\mathcal I_\eps)$ is a convergent family of
point-based ideals,
$\ell (\lim_{\eps \to 0}\mathcal I_\eps) =
\lim_{\eps \to 0} \ell (\mathcal I_\eps)$.
\end{cor}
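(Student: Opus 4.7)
The plan is to combine Lemmas \ref{lenglimsup} and \ref{lengliminf} via a stratification of $A$ by the length function $\eps \mapsto \ell(\mathcal I_\eps)$. Write $\mathcal I := \lim_{\eps \to 0} \mathcal I_\eps$ and $L := \ell(\mathcal I)$, which is finite since $V(\mathcal I)$ is finite. For point-based ideals $\mathcal I_\eps = \mathcal I(S_\eps)$ the length equals $\#S_\eps$, so the goal will be to show $\#S_\eps = L$ for all $\eps \in A$ sufficiently close to $0$.

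The first thing to establish will be that $\#S_\eps$ is bounded near $0$; this is the main obstacle, since a priori nothing prevents the pole count from blowing up along sequences approaching $0$. The plan is to argue by contradiction: if there were a sequence $\eps_j \to 0$ in $A$ with $N_j := \#S_{\eps_j} \to \infty$, I would fix any $N \in \N$ and, for $j$ large, pick an arbitrary $N$-point subset $\tilde S_j \subset S_{\eps_j}$, setting $\tilde{\mathcal I}_j := \mathcal I(\tilde S_j)$. The family $(\tilde{\mathcal I}_j)$ satisfies the hypotheses of Lemma \ref{lengliminf} and so yields $\ell(\liminf_j \tilde{\mathcal I}_j) \ge N$. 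On the other hand $\mathcal I_{\eps_j} \subset \tilde{\mathcal I}_j$ and $\mathcal I \subset \liminf_j \mathcal I_{\eps_j}$ (passing to a subsequence can only enlarge the lower limit), so
\[
\mathcal I \;\subset\; \liminf_j \mathcal I_{\eps_j} \;\subset\; \liminf_j \tilde{\mathcal I}_j ,
\]
and the anti-monotonicity of $\ell$ under ideal inclusion will give $L \ge N$ for arbitrarily large $N$, the desired contradiction. The key device here is replacing $\mathcal I_{\eps_j}$ by the larger ideal $\tilde{\mathcal I}_j$ of a fixed-size subset of its poles, which is what allows Lemma \ref{lengliminf} to be applied with a uniform pole count.

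Once boundedness is in hand, say $\#S_\eps \le M$ for $\eps \in A$ close to $0$, the plan is to decompose the relevant part of $A$ into finitely many strata $A_N := \{\eps : \#S_\eps = N\}$, $N = 1,\dots,M$, and set $\mathcal N := \{N : 0 \in \overline{A_N}\}$. For each $N \in \mathcal N$ the subfamily $(\mathcal I_\eps)_{\eps \in A_N}$ consists of ideals based on exactly $N$ distinct points tending to $0$, and by the sandwich $\liminf_A \subset \liminf_{A_N} \subset \limsup_{A_N} \subset \limsup_A$ it still converges to $\mathcal I$. Lemma \ref{lenglimsup} then forces $\ell(\mathcal I) \le N$ and Lemma \ref{lengliminf} forces $\ell(\mathcal I) \ge N$, pinning $N$ down to $L$. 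Hence $\mathcal N = \{L\}$, and since each $A_N$ with $N \ne L$ is bounded away from $0$ and there are only finitely many of them, their union is bounded away from $0$ as well. Therefore $\#S_\eps = L$ for $\eps \in A$ sufficiently close to $0$, giving $\lim_{\eps \to 0} \ell(\mathcal I_\eps) = L = \ell(\mathcal I)$.
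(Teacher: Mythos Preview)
Your argument is correct, but it proves more than the paper needs. In the paper's setup (see the Introduction and the hypotheses of Lemma~\ref{lengliminf}), the family $(S_\eps)$ consists of a \emph{fixed} number $N$ of distinct points, all tending to $0$. Under that standing assumption $\ell(\mathcal I_\eps)=N$ for every $\eps$, so the right-hand side is identically $N$; Lemma~\ref{lenglimsup} gives $\ell(\lim_{\eps\to 0}\mathcal I_\eps)\le N$ and Lemma~\ref{lengliminf} gives $\ell(\lim_{\eps\to 0}\mathcal I_\eps)\ge N$. That is the whole proof the paper has in mind, which is why the corollary carries no explicit argument.

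You instead allow $\#S_\eps$ to vary with $\eps$, which forces two extra steps: the boundedness argument (replace $S_{\eps_j}$ by an $N$-point subset $\tilde S_j$ so that Lemma~\ref{lengliminf} applies with a uniform pole count, then push the resulting inequality back through $\mathcal I\subset\liminf_j\tilde{\mathcal I}_j$), and the stratification by pole count. Both are sound; the subset trick in Step~1 is a genuinely nice device. The one spot deserving a word is the assertion $L=\ell(\mathcal I)<\infty$: the implication ``$V(\mathcal I)$ finite $\Rightarrow$ $\ell(\mathcal I)<\infty$'' is not automatic for arbitrary ideals of $\mathcal O(\Omega)$; you are tacitly using that the ideals in play are finitely generated (the paper's standing convention, via Cartan's Theorem~B), so that the analytic Nullstellensatz yields $\mathfrak M_0^k\subset\mathcal I$ for some $k$.
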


\begin{proof}[Proof of Lemma \ref{prodlem}]

Denote the elements of $\pi_j(\{a_1^\eps, \dots, a_N^\eps\}) $
by $\alpha^{i,\eps}_j$, $1\le i \le N_j$, $1\le j \le n$. Let 
$\psi^\eps_j(\zeta):= (\zeta - \alpha^{1,\eps}_j) \cdots (\zeta - \alpha^{N_j,\eps}_j)$, for $\zeta \in \C$. 

Suppose that  $f\in \mathcal J$. Then there are holomorphic functions
$h_j$, $1\le j \le n$, such that 
$$f(z) = \sum_{j=1}^n z_j^{N_j} h_j(z).
$$
Setting 
$$
f_\eps (z) = \sum_{j=1}^n \psi^\eps_j(z_j) h_j(z),
$$
we have a family of $f_\eps \in \mathcal J_\eps$ such that
$f=\lim_{\eps \to 0} f_\eps$.

Conversely, let $\lim_{k\to \infty} f_{\eps_k} = f$, where 
$f_{\eps_k} \in \mathcal J_{\eps_k}$, ${\eps_k}\to 0$. 
By rescaling, we might assume that $\overline \D^n \subset \Omega$.
One can prove by induction on $n$ that 
if $f_\eps \in \mathcal J_\eps$, and $|\eps|$ is small
enough so that $|\alpha^{i,\eps}_j|<1$ for all $i, j$, then 
$$
\int_{(\partial \D)^n} \frac{f_\eps(z_1, \dots, z_n)}
{\prod_{j=1}^n\psi^\eps_j(z_j)} dz_1 \dots dz_n = 0.
$$
Applying this to $f_{\eps_k}$,
and passing to the limit as $k\to \infty$, we find 
$$
\frac{\partial^{N_1+\dots+N_n-n} f}{\partial z_1^{N_1-1} \dots 
\partial z_n^{N_n-1}}(0)= \frac1{(2\pi i)^n}
\int_{(\partial \D)^n} \frac{f(z_1, \dots, z_n)}{\prod_{j=1}^n
  z_j^{N_j} }dz_1 \dots dz_n = 0. 
$$
The same result will hold replacing each $N_j$ by any $k_j\le N_j$,
simply by taking appropriate subsets of $P^\eps$.
\end{proof}

\section{The Douady space and the cycle space of \ $\Omega$}
\label{doucy}

The aim of this section is to show that the convergence notion we introduced in the previous section is in fact  equivalent to convergence in the Douady space of \ $\Omega$ (see proposition \ref{conv.in.dou} below.)

\subsection{The Douady space of \ $\Omega$}

A \emph{flat and proper family of subspaces} of \ $\Omega$ \ is a pair of complex spaces \ $(S,Z)$ \ such that \ $Z$ \ is a subspace of \ $S\times \Omega$ \ and such that the natural projection \ $\pi : Z\rightarrow S$ \ is a flat and proper holomorphic map. The space \ $S$ \ is called the \emph{parameter space} and the  space \ $Z$ \ is called the \emph{graph} of the family.

\smallskip
Due to the natural identification \ $\{s\}\times \Omega\simeq \Omega$ \ we think of this as a family of compact subspaces \ $(Z_s)_{s\in S}$ \ in \ $\Omega$ \ parametrized by the space \ $S$ \ where \ $Z_s$ \ denotes the analytic fibre of \ $\pi$ \ over the point \ $s$. 

\smallskip
Since \ $\pi$ \ is a proper holomorphic map the spaces \ $Z_s$ \ are all {\em compact} complex subspaces of \ $\Omega$ \ and consequently finite.  Hence \ $\pi$ \ is a finite (and proper) holomorphic map and in that case one knows that \ $\pi$ \ is flat if and only if the direct image sheaf \ $\pi_*\mathcal{O}_Z$ \ is a locally free \ $\mathcal{O}_S-$module (of finite rank); see 
\cite{Dou2}. 
     This again is equivalent to say that, on each connected component of \ $S$, all the fibres of the sheaf \ $\pi_*\mathcal{O}_Z$ \ are complex vector spaces of the same dimension.  But the fibre of \ $\pi_*\mathcal{O}_Z$ \ at the point \ $s$ \ is naturally isomorphic to the quotient space \ $\Gamma(\Omega,\mathcal{O}_{\Omega})/I_s$ \ where \ $I_s$ \ denotes the ideal of   \ $Z_s$ \ in \ $\Gamma(\Omega,\mathcal{O}_{\Omega})$. In other words the map \ $\pi$ \ is flat if and only if the length of \ $I_s$ \ is a locally constant function on \ $S$.

\smallskip
From 
\cite{Dou1} 
     we know  that every complex space admits a \emph{universal flat and proper family}. In our setting this means that there exists a flat and proper family \ $(D,X)$ \ of subspaces in \ $\Omega$ \ having the following universal property:
\begin{itemize}
\item
If \ $(S,Z)$ \ is any flat and proper family of subspaces of \ $\Omega$ \  then there exists a {\em unique} holomorphic map \ $f : S\rightarrow D$ \ such that \ $Z$ \ is the pullback of \ $X$ \ by \ $f\times\id_{\Omega}$.
\end{itemize}
 
This implies in particular that the space \ $D$, which is by definition the {\em Douady space} of \ $\Omega$, parametrizes (in a one-to-one way) all the compact subspaces of \ $\Omega$; in other words every compact subspace of \ $\Omega$ \ appears exactly once in the family \ $(X_t)_{t\in D}$.  In the sequel we will denote \ $\mathcal{I}_t$ \ the \ $\mathcal{O}_{\Omega}$-ideal corresponding to \ $X_t$ \ and put \ $I_t := \Gamma(\Omega,\mathcal{I}_t)$.

\smallskip
We have a natural decomposition \ $D = \sqcup_{k\geq 1}D_k$, where \ $D_k$ denotes the open subspace of \ $D$ \ formed by those \ $t$ \ such that \ $I_t$ \ is of length \ $k$.

\subsection{The cycle space of \ $\Omega$}

Let \ $\Sym^k(\Omega)$ \ denote the \ $k$-th symmetric product of \ $\Omega$, i.e. the normal complex space obtained as a quotient of \ $\Omega^k$ \ by the natural action of the \ $k$-th symmetric group.   One can think of every element in \ $\Sym^k$ \ as a given set of points each with a multiplicity. These elements are usually called \ \emph{$0$-cycles} and  each one of them can be expressed in a unique way as \ $n_1x_1 + \cdots + n_lx_l$, where \ $x_1,\ldots,x_l$ \ are mutually distinct, $n_j$ \ is the multiplicity of \ $x_j$ \ and consequently \ $n_1+\cdots +n_l = k$.

Since every compact complex subspace of \ $\Omega$ \ is finite  the disjoint union 
$$
\mathcal{C} := \bigsqcup_{k\geq 1}\Sym^k(\Omega)
$$
is the \emph{cycle space} of \ $\Omega$.

\medskip
For every \ $k$ \ we have a natural holomorphic map from the reduction of \ $D_k$ \ to \ $\Sym^k(\Omega)$ \ defined in the following way (see for instance \cite{Ba} or \cite{Ma}): 

\smallskip
To each \ $t$ \ in \ $D_k$ \ we associate the \ $0$-cycle \ $n_1x_1 + \cdots + n_lx_l$ \ where \ $x_1,\ldots,x_l$ \ are the mutually distinct points of \ $X_t$ \ and \ $n_j := \dim_{\C}\mathcal{O}_{\Omega,x_j}/(\mathcal{I}_t)_{x_j}$. 

\smallskip
Moreover this map is proper  \cite{Na}. Hence  we obtain a proper holomorphic map 
$$
\mu : D_{red}\longrightarrow \mathcal{C}
$$
where \ $D_{red}$ \ denotes the reduction of \ $D$.

\subsection{Topology of the Douady-space of \ $\Omega$}

Since the topological space underlying \ $D$ \ is first countable the following proposition characterizes its topology.

\begin{prop}\label{conv.in.dou}
Let \ $(t_j)_{j\geq 1}$ \ be a sequence in \ $D$ \ and let  \ $a$ \ be a point in \ $D$. Then the sequence \ $(t_j)_{j\geq 1}$ \ converges to  \ $a$ \ in \ $D$ \ if and only if the following two conditions are satisfied:
\begin{enumerate}
\item
\ The sequence  \ $(\mu(t_j))_{j\geq 1}$ \ converges to  \ $\mu(a)$ \ in \ $\mathcal{C}$.
\item 
\ $\lim\limits_{j\to \infty}I_{t_j} = I_a$, where the limit is taken in the sense of Definition \ref{convid}.
\end{enumerate}
\end{prop}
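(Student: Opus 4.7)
The plan is to work locally around $a$ in $D$ using the universal ideal sheaf $\I\subset\O_{D\times\Omega}$, combining flatness of $\pi_*\O_X$ with Cartan's Theorem~B on Stein neighborhoods. \emph{Forward direction:} assume $t_j\to a$ in $D$. Condition (1) is immediate from continuity of $\mu$. For the inclusion $\liminf_j I_{t_j}\supset I_a$, fix a Stein open neighborhood $U$ of $a$ in $D$. Flatness of $\pi_*\O_X$ over $\O_D$ gives $\mathrm{Tor}^1_{\O_D}(\O_X,\O_D/\mathfrak{m}_a)=0$, so tensoring the exact sequence $0\to\I\to\O_{D\times\Omega}\to\O_X\to 0$ with $\O_D/\mathfrak{m}_a$ identifies $\I|_{\{a\}\times\Omega}$ with $\I_a$. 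The short exact sequence $0\to\mathfrak{m}_a\I\to\I\to\I_a\to 0$ together with Cartan's Theorem~B on the Stein space $U\times\Omega$ yields surjectivity of the restriction $\Gamma(U\times\Omega,\I)\to I_a$; any $f\in I_a$ then lifts to some $F\in\Gamma(U\times\Omega,\I)$, and the functions $f_j:=F(t_j,\cdot)\in I_{t_j}$ converge to $f$ locally uniformly on $\Omega$.

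For the reverse inclusion $\limsup_j I_{t_j}\subset I_a$, shrink $U$ inside the component $D_k$ containing $a$ and trivialize $\pi_*\O_X|_U$ by holomorphic representatives $e_1(t),\dots,e_k(t)\in\O(\Omega)$ whose classes form an $\O(\Omega)/I_t$-basis at every $t\in U$. Every $f\in\O(\Omega)$ then has unique coefficients $\lambda_i(f,t)\in\C$, linear in $f$ and holomorphic in $t$, with $f-\sum_i\lambda_i(f,t)\,e_i(t)\in I_t$. Since the supports $X_t$ stay in a fixed compact subset of $\Omega$ for $t\in U$, each functional $\lambda_i(\cdot,t)$ is continuous on $\O(\Omega)$ in the compact-open topology, uniformly in $t$, so $(f,t)\mapsto\lambda_i(f,t)$ is jointly continuous. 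Hence if $f_{j_k}\in I_{t_{j_k}}$ converges locally uniformly to $f$ with $t_{j_k}\to a$, then $\lambda_i(f,a)=\lim_k\lambda_i(f_{j_k},t_{j_k})=0$ for every $i$, i.e.\ $f\in I_a$.

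\emph{Reverse direction:} assume (1) and (2). Properness of $\mu$ confines $(t_j)$ to a compact subset of $D$; by first countability it suffices to verify that every subsequential limit $t_{j_k}\to b$ satisfies $b=a$. Applying the forward direction to such a subsequence gives $I_b=\lim_k I_{t_{j_k}}=I_a$, and since the Douady space parametrizes compact subspaces of $\Omega$ bijectively, $b=a$.

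The principal obstacle is the $\liminf$ inclusion, which requires coordinating flatness of the universal family (to identify $\I|_{\{a\}\times\Omega}$ with the honest ideal $\I_a$) with Cartan's Theorem~B (to globally lift $f\in I_a$ to a holomorphic family of elements of $I_t$). Once this lifting is in hand, the $\limsup$ inclusion reduces to the (routine) joint continuity of the coefficients in a holomorphic basis of $\pi_*\O_X|_U$, and the reverse direction follows formally from properness of $\mu$ and the bijectivity of the Douady parametrization.
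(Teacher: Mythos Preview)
Your proof is correct and follows essentially the same route as the paper's: continuity of $\mu$ for condition~(1); Stein-ness of a neighborhood $U\times\Omega$ together with flatness to lift $f\in I_a$ to a global section of the universal ideal sheaf for $I_a\subset\liminf_j I_{t_j}$; local freeness of $\pi_*\O_X$ plus continuity of evaluation for $\limsup_j I_{t_j}\subset I_a$; and properness of $\mu$ combined with the bijectivity of the Douady parametrization for the converse. Your version is somewhat more explicit than the paper's---you spell out the $\mathrm{Tor}^1$ vanishing and the short exact sequence for the lifting step, and you unpack the $\limsup$ argument in terms of coordinate functionals $\lambda_i(f,t)$ rather than simply asserting that the induced sections of $\pi_*\O_X$ converge---but the underlying mechanism is identical.
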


\begin{proof} 
Suppose that the sequence \ $(t_j)_{j\geq 1}$ \ converges to \ $a$ \ in \ $D$. Then condition $(1)$ is satisfied because the map \ $\mu$ \ is continuous. \\
To prove that condition $(2)$ is fulfilled it is sufficient to show that \ $\lim_{t\to a}I_t = I_a$. Without loss of generality we may replace \ $D$ \ by a Stein open (connected) neighbourhood \ $T$ \ of \ $a$ \ in \ $D$ \ and we may assume \ $T$ \ is reduced since flatness is preserved by base change.  We still denote \ $X$ \ the restriction of the graph \ $X$ \ to \ $T$ \ and let \ $\mathcal{I}$ \ be the corresponding \ $\mathcal{O}_{T\times\Omega}$-ideal.\\
Let us first prove that \ $I_{a}\subseteq\liminf\limits_{t\to a}I_t$.  To do so take any function \ $f$ \ in \ $I_{a}$. Then since \ $T\times\Omega$ \ is Stein there exists a function \ $g$ \ in  \ $\Gamma(T\times\Omega,\mathcal{I})$ \ such that \ $g(a,z) = f(z)$ \ for all \ $z$ \ in \ $X_{a}$. Now for every \ $t$ \ in \ $T$ \ define a function \ $f_t$ \ on \ $\Omega$ \ by setting 
$$
f_t(z) := g(t,z).
$$
Then it is clear that \ $f_t\in I_t$ \ and we obviously have \ $f_t\to f$ \ uniformly on every compact set in \ $\Omega$.\\
To prove that \ $\limsup\limits_{t\to a}I_t\subseteq I_{a}$ \ suppose we have a sequence of points \ $(x_{\nu})_{\nu}$ \ in \ $T$ \ converging to \ $a$ \ and  for each \ $\nu$ \ a function \ $f_{\nu}$ \ in \ $I_{x_{\nu}}$ \ such that  the sequence  \ $(f_{\nu})_{\nu}$ \ converges locally uniformly to a function \ $f$ \  in \ $\Gamma(\Omega,\mathcal{O}_{\Omega})$. Now each \ $f_{\nu}$ \ defines a global holomorphic section \ $\sigma_{\nu}$ \ of the locally free sheaf \ $\pi_*\mathcal{O}_X$ \ on \ $T$ \ and they converge locally uniformly to the holomorphic global section \ $\sigma$ \ defined by \ $f$ \ as \ $\nu\to \infty$. Now if \ $\tau$ \ is a global section of \ $\pi_*\mathcal{O}_X$ \ defined by a global holomorphic function \ $g$ \ on \ $\Omega$ \ then the \lq\lq value\rq\rq,  \ $\tau(t)$,   of  \ $\tau$ \ at a point \ $t$ \ in \ $T$ \ is the image of \ $g$ \ in  the \ $\C-$vector space
$$
(\pi_*\mathcal{O}_X)_t\otimes_{\mathcal{O}_{T,t}}\C \ \simeq \ \Gamma(X_t,\mathcal{O}_{X_t}) \ \simeq \ \Gamma(\Omega,\mathcal{O}_{\Omega})/I_t
$$
In particular  \ $\tau(t) = 0$ \ is equivalent to \ $g\in I_t$. Since \ $\sigma_{\nu}(t_{\nu}) = 0$ \ it follows that \ $\sigma(a) = 0$ \ and consequently \ $f\in I_{a}$. 

\medskip
Conversely, suppose that the sequence \ $(t_j)_{j\geq 1}$ \  satisfies the two conditions and assume that it does not tend to \ $a$.  Then the point \ $a$ \ admits an open neighbourhood \ $V$ \ in \ $D$ \ outside of which there exists a subsequence  \ $(t_{j_l})_{l\geq 1}$ \ of  \ $(t_j)_{j\geq 1}$.

Since the map \ $\mu : D_{red}\longrightarrow \mathcal{C}$ \ is proper we may, without loss of generality, assume that the subsequence  \ $(t_{j_l})_{l\geq 1}$ \  converges to a point \ $b$ \  in the fibre \ $\mu^{-1}(\mu(a))$. From what we proved above it then follows that
$$
I_a = \lim_{j\to\infty}I_{t_j} = \lim_{l\to\infty}I_{t_{j_l}} = I_b
$$
in contradiction to the fact that \ $t_{j_l}\notin V$ \ for all \ $l$.
\end{proof}

{\bf Remark.} Assuming the hypothesis of Section \ref{conv.of.id} let us denote \ $\iota : A\rightarrow D$ \ the canonical mapping that associates to each \ $\epsilon$ \ in \ $A$ \ the compact complex subspace of \ $\Omega$ \ defined by \ $I_{\epsilon}$. If \ $I_0 := \lim_{\epsilon\to 0}I_{\epsilon}$ \ exists then \ $I_0$ \ defines a point in \ $D$ \ and we get
$$
\lim_{\epsilon\to 0}\iota(\epsilon) = \iota(0).
$$

\section{General inequalities}
\label{gineq}

Suppose throughout this section that we are given a family of ideals 
$\mathcal I_\eps \subset \mathcal O(\Omega)$. 

\subsection{First estimates}
\label{firstest}
\begin{lemma}
\label{roughest}
Let $\Omega$ be a bounded hyperconvex domain such that 
$0\in \Omega$. Let $S_\eps = \{a_j^\eps, 1\le j \le N\}$,
with $\lim_{\eps\to0} a_j^\eps= 0, 1\le j \le N$. 

Then for any $\delta>0$, there exists $\eps_0=\eps_0(\delta)>0$
such that for any $z \in \Omega \setminus \bar B (0,\delta)$,
for any $\eps$ such that $ |\eps| \le \eps_0$,
$$
(N+\delta) G_{0}(z) \le G_{S_\eps} (z) \le (1-\delta) G_{0}(z).
$$
\end{lemma}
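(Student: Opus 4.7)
The plan is to sandwich $G_{S_\eps}$ between scalar multiples of $G_0$ in two stages: first establish the bounds on the compact inner sphere $\partial B(0,\delta)$, using the monotonicity $\I\subset\J\Rightarrow G_\I\le G_\J$ together with the classical continuity of a single-pole Green function under pole variation on a hyperconvex domain (Lelong \cite{Lel}); then propagate them to all of $\Om\setminus\bar B(0,\delta)$ via the maximum principle for maximal plurisubharmonic functions recalled in the introduction, exploiting that $G_{S_\eps}$ is itself maximal plurisubharmonic there as soon as $\eps_0$ is small enough that all poles $a_j^\eps$ lie inside $B(0,\delta)$.

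First I would record two standard inputs. Hyperconvexity of $\Om$ makes $G_0$ continuous on $\bar\Om\setminus\{0\}$ with boundary value $0$ on $\partial\Om$ and strictly negative in the interior, so there is $c_\delta>0$ with $G_0\le-c_\delta$ on the compact set $\partial B(0,\delta)$; moreover each single-pole Green function $G_{\{a_j^\eps\}}$ converges uniformly to $G_0$ on $\partial B(0,\delta)$ as $\eps\to 0$. The upper bound on $\partial B(0,\delta)$ then follows from $\I(S_\eps)\subset\I(\{a_1^\eps\})$, which gives $G_{S_\eps}\le G_{\{a_1^\eps\}}$, combined with
\[
G_{S_\eps}(z)-(1-\delta)G_0(z)\le\bigl(G_{\{a_1^\eps\}}(z)-G_0(z)\bigr)+\delta G_0(z)\le o_\eps(1)-\delta c_\delta,
\]
which is $\le 0$ once $|\eps|$ is small. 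For the lower bound I would use the explicit admissible candidate $u_\eps:=\sum_{j=1}^N G_{\{a_j^\eps\}}$: it is plurisubharmonic, non-positive, and at each pole $a_k^\eps$ only the $k$-th summand blows up, contributing $\log|z-a_k^\eps|+O(1)$ while the rest stay non-positive, so $u_\eps$ enters the supremum defining $G_{S_\eps}$. Hence $G_{S_\eps}\ge u_\eps=NG_0+o_\eps(1)$ uniformly on $\partial B(0,\delta)$, and $(N+\delta)G_0\le u_\eps$ there follows by the same arithmetic.

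To transport both bounds from $\partial B(0,\delta)$ to $\omega:=\Om\setminus\bar B(0,\delta)$ I would apply the maximum principle: both $(1-\delta)G_0$ and $(N+\delta)G_0$ are maximal plurisubharmonic on $\Om\setminus\{0\}\supset\omega$, and $G_{S_\eps}$ itself is maximal plurisubharmonic on $\omega$ for $|\eps|\le\eps_0$. For the upper bound, I would apply the principle with $u=(1-\delta)G_0$ and $v=G_{S_\eps}$; for the lower bound, with $u=G_{S_\eps}$ and $v=(N+\delta)G_0$. In each case the required inequality $u\ge v$ on $\partial B(0,\delta)$ has just been established. The one technical nuance is that $\omega$ is not relatively compact in $\Om\setminus\{0\}$, so the maximum principle as stated does not apply directly; I would handle this by exhausting $\omega$ through $\omega_\rho:=\Om_\rho\setminus\bar B(0,\delta)$ with $\Om_\rho\Subset\Om$, replacing $v$ by $v-\eta$ ($\eta>0$) to force strict inequality on the moving outer boundary $\partial\Om_\rho$ via the uniform vanishing of all three functions at $\partial\Om$, and letting $\rho,\eta\to 0$. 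This outer-boundary bookkeeping is the only mildly nontrivial step in an otherwise direct sandwich argument.
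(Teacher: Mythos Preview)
Your argument is correct and follows essentially the same route as the paper: sandwich $G_{S_\eps}$ between $\sum_j G_{\{a_j^\eps\}}$ and $\min_j G_{\{a_j^\eps\}}$, establish the two inequalities on an inner sphere, and propagate by maximality. The only noteworthy difference is that you invoke Lelong's continuity of the single-pole Green function to get $G_{\{a_j^\eps\}}\to G_0$ on $\partial B(0,\delta)$ directly, whereas the paper avoids this black box by using the crude ball inclusions $\bar B(0,r_1)\subset\Omega\subset B(0,r_2)$ and working instead on a smaller sphere $\|z\|=\delta_1\le\delta$ chosen so that the logarithmic terms dominate; your handling of the outer boundary via the $-\eta$ exhaustion is also more explicit than the paper's, which simply uses that all functions vanish on $\partial\Omega$.
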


{\bf  Remark.} This implies that the family $G_{S_\eps}$
is equicontinuous near $\partial \Omega$. As a consequence, 
a subsequence $G_{S_{\eps_j}}$ converges uniformly on compacta
of $\overline \Omega \setminus \{0\}$ if and only if it 
converges uniformly on compacta
of $ \Omega \setminus \{0\}$.

\begin{proof}
It is well known that 
$$
\sum_{j=1}^N G_{a_j^\eps}(z) \le G_{S_\eps} (z) \le \min_{1\le j \le N} G_{a_j^\eps}(z).
$$
We will compare each of the $G_{a_j^\eps}(z) $ to $ G_{0}(z) $. 
There are all equal to $0$ on $\partial \Omega$. 

There are numbers $0<r_1<r_2$, $r_2 \ge 1$, such that $\bar B(0,r_1) \subset \Omega \subset B(0,r_2)$,
so 
$$
\log \frac{\|z\|}{r_2} \le G_{0}(z) \le \log \frac{\|z\|}{r_1} .
$$

Now we take $z$ such that $\|z\|=\delta_1 \le \delta$, $\delta_1$ to be chosen
below. Then for $|a_j^\eps| < \delta$, 
$$
G_{a_j^\eps}(z) \le \log\frac{\|z-a_j^\eps\|}{r_1 - \|a_j^\eps\|} 
\le \log \frac{\delta_1}{r_1} + 1,
$$
for $|\eps|$ small enough (depending on $\delta_1$).
We can choose $\delta_1$ so small that 
$$
 \log {\delta_1} - \log {r_1} + 1 \le
(1-\delta) \log {\delta_1} - \log {r_2}  \le (1-\delta)\log \frac{\delta_1}{r_2},
$$
so that $G_{a_j^\eps}(z) \le (1-\delta) G_{0}(z)$ when $\|z\|=\delta_1$.
Since $ G_{0}$ is maximal plurisubharmonic on $\Omega \setminus \bar B(0,\delta_1)$,
and $G_{a_j^\eps}(z)= (1-\delta) G_{0}(z)=0$ when $z\in \partial \Omega$,
the inequality holds on the whole of $\Omega \setminus \bar B(0,\delta_1)$.

In a similar way, for$\|z\|=\delta_1 \le \delta$, and $|a_j^\eps| < \delta$, 
$$
G_{a_j^\eps}(z) \ge \log\frac{\|z-a_j^\eps\|}{r_2 +\|a_j^\eps\|} 
\ge \log \frac{\delta_1}{r_2} - 1,
$$
for $|\eps|$ small enough.
We can choose $\delta_1$ so small that 
$$
 \log {\delta_1} - \log {r_2} -1 \ge
(1+\delta/N)\log \frac{\delta_1}{r_1}, 
$$
so that $G_{a_j^\eps}(z) \ge (1+\delta/N) G_{0}(z)$ when $\|z\|=\delta_1$,
and the inequality on the whole of $\Omega \setminus \bar B(0,\delta_1)$ follows
by maximality of $G_{a_j^\eps}$. 
\end{proof}

\begin{proof*}{\it Proof of Proposition \ref{massdecr}.}
The first claim in \cite[Lemma 2.1]{Ce3} states that if 
$u$, $u_j \in \mathcal F(\Omega)$ and $u_j$ converges weakly to $u$,
then for any $w\in PSH_-(\Omega)$,
$$
\limsup_{j\to\infty} \int_\Omega w (dd^c u_j)^n \le \int_\Omega w (dd^c u)^n.
$$
Since $\Omega$ is hyperconvex, for any compact $K\subset \Omega$ we can find a function
$w\in PSH_-(\Omega)$ such that $-1\le w$ on $\Omega$ and $w\equiv -1$ on $K$. 
If we can apply the above inequality to $u_j=G_{S_{\eps_j}}$ and $u=g$, we find the
desired inequality.

It remains to see that all those functions belong to Cegrell's class $\mathcal F(\Omega)$,
as defined in \cite[Definition 4.6]{Ce2}, or in \cite{Ce3}. Recall that $\mathcal E_0$
is the class of all bounded functions $v\in PSH(\Omega)$ which tend to $0$ at the
boundary and such that $\int (dd^c v)^n <\infty$. Then $u\in\mathcal F(\Omega)$
if and only if there exists a sequence $u_j$ decreasing to $u$ with $u_j\in \mathcal E_0$
and $\sup_j \int (dd^c u_j)^n <\infty$. 

By Lemma \ref{roughest}, for $j$ large enough, $G_{S_{\eps_j}}
\ge (N+\delta) G_{0}(z)$, and therefore their weak limit $g$ must satisfy the
same inequality (almost everywhere). Hence, for any natural number $m$,
$\max (-m ,g)$ and $\max (-m ,G_{S_{\eps_j}})$ 
have bounded Monge Amp\`ere mass, with the same bound, and vanish on the boundary of $\Omega$.
Since $g$ (resp. $G_{S_{\eps_j}}$) is the decreasing limit of $\max (-m ,g)$
(resp. $\max (-m ,G_{S_{\eps_j}})$), those functions do belong to the class $\mathcal F(\Omega)$.
\end{proof*}

\subsection{Inequalities involving limit ideals}

Let ${\mathcal I_*}:= \liminf_{\eps \to 0} \mathcal I_\eps$.
We make the additional assumption that $V({\mathcal I_*}) = \{0\}$. A general lower bound can then be expressed in terms of 
${\mathcal I_*}$. 

\begin{lemma}
\label{glb}
For any $\eta >0$ and any neighborhood $\omega$ of $0$, there exists
$\eps_0 >0$ such that for $|\eps|<\eps_0$,
$$
G_{\mathcal I_\eps } (z) \ge (e({\mathcal I_*})+\eta) G_{0} (z),
\mbox{ for } z\in \Omega \setminus \omega.
$$
In particular, $\liminf_{\eps \to 0}G_{\mathcal I_\eps } \ge e({\mathcal I_*}) G_{0}$.
\end{lemma}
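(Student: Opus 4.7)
My strategy is to mimic the proof of Lemma~\ref{roughest}, but tune the per-pole slack so that the final coefficient is $e(\mathcal I_*)+\eta$ rather than $N+\delta$. The essential numerical input is that $e(\mathcal I_*)\ge\ell(\mathcal I_*)\ge N$ (combining Proposition~\ref{leqe} with Lemma~\ref{lengliminf} in the point-based setting $\mathcal I_\eps=\mathcal I(S_\eps)$ that the paper focuses on); equivalently, the per-pole factor $(e(\mathcal I_*)+\eta)/N$ exceeds $1$ by at least $\eta/N$, and this surplus will win a logarithmic race against additive constants.

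Given $\omega$, I would first pick a closed ball $\bar B(0,\delta)\subset\omega$ with $\delta$ to be fixed small in a moment; since $\Omega\setminus\omega\subset\Omega\setminus\bar B(0,\delta)$, it suffices to prove the inequality on $\Omega\setminus\bar B(0,\delta)$. A direct $\liminf$ argument from $V(\mathcal I_*)=\{0\}$ forces Hausdorff convergence $V(\mathcal I_\eps)\to\{0\}$, so for $|\eps|<\eps_0$ one has $V(\mathcal I_\eps)\subset B(0,\delta/2)$ and $G_{\mathcal I_\eps}$ is maximal plurisubharmonic on $\Omega\setminus\bar B(0,\delta)$. On $\partial B(0,\delta)$, the candidate $w\mapsto\log(|w-a_j^\eps|/(2r_2))$ in $PSH_-(\Omega)$ yields $G_{a_j^\eps}(z)\ge\log\delta-\log(4r_2)$, while from $\bar B(0,r_1)\subset\Omega$ one has $G_0(z)\le\log\delta-\log r_1$. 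Subtracting,
\[
G_{a_j^\eps}(z)-\tfrac{e(\mathcal I_*)+\eta}{N}\,G_0(z)\ge\Bigl(\tfrac{e(\mathcal I_*)+\eta}{N}-1\Bigr)\log\tfrac{1}{\delta}+C,
\]
with $C$ a constant depending only on $e(\mathcal I_*),\eta,N,r_1,r_2$. For $\delta$ below a threshold depending only on these parameters, the right-hand side is non-negative, and summing the standard lower bound $G_{\mathcal I_\eps}\ge\sum_{j=1}^N G_{a_j^\eps}$ yields $G_{\mathcal I_\eps}\ge(e(\mathcal I_*)+\eta)G_0$ on $\partial B(0,\delta)$.

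Since the inequality is trivial on $\partial\Omega$ (both sides vanish by hyperconvexity), the comparison principle for maximal plurisubharmonic functions---applied to $u=G_{\mathcal I_\eps}$ and $v=(e(\mathcal I_*)+\eta)G_0$ on $\Omega\setminus\bar B(0,\delta)$---propagates the inequality throughout that set, and hence to $\Omega\setminus\omega$. For the ``in particular'' conclusion, take $\liminf_{\eps\to 0}$ for each fixed $\eta>0$, then let $\eta\downarrow 0$ and $\omega$ shrink to $\{0\}$: because $G_0\le 0$, the family $\eta\mapsto(e(\mathcal I_*)+\eta)G_0$ is pointwise non-increasing, so $\sup_{\eta>0}(e(\mathcal I_*)+\eta)G_0=e(\mathcal I_*)G_0$, giving $\liminf G_{\mathcal I_\eps}\ge e(\mathcal I_*)G_0$ on $\Omega\setminus\{0\}$. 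The delicate point I anticipate is the bookkeeping of the two parameters $\delta$ and $\eps_0$: $\delta$ must be small (in $\eta$) to absorb the additive constant $C$, and then $\eps_0$ must be small (in $\delta$) to secure the Hausdorff containment $V(\mathcal I_\eps)\subset B(0,\delta/2)$.
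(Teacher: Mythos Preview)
Your argument is correct in the point-based setting $\mathcal I_\eps=\mathcal I(S_\eps)$, and it follows a genuinely different route from the paper's. The paper does not use the algebraic inequality $e(\mathcal I_*)\ge\ell(\mathcal I_*)\ge N$ at all. Instead it picks a reduction $\mathcal J=\langle f^1,\dots,f^n\rangle$ of $\mathcal I_*$, approximates each $f^j$ by $f_\eps^j\in\mathcal I_\eps$ (from the definition of $\liminf$), and observes that the resulting complete-intersection ideal $\mathcal J_\eps=\langle f_\eps^1,\dots,f_\eps^n\rangle\subset\mathcal I_\eps$ satisfies $\mathcal J_\eps\supset\bigcap_k\mathfrak M_{a_{\eps,k}}^{m_{\eps,k}}$, where the $a_{\eps,k}$ are the zeros of $f_\eps$ with multiplicities $m_{\eps,k}$. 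Rouch\'e's theorem bounds $\sum_k m_{\eps,k}\le e(\mathcal I_*)$, and then $G_{\mathcal I_\eps}\ge G_{\mathcal J_\eps}\ge\sum_k m_{\eps,k}G_{a_{\eps,k}}\ge(e(\mathcal I_*)+\eta)G_0$ follows from the per-pole convergence $G_{a_{\eps,k}}\to G_0$.

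The trade-off: your proof is more elementary (no reductions, no Rouch\'e, no Brian\c con--Skoda input) and transparently parallels Lemma~\ref{roughest}, but it is tied to the point-based case because it needs both $G_{\mathcal I_\eps}\ge\sum_{j=1}^N G_{a_j^\eps}$ and the numerical input $e(\mathcal I_*)\ge N$ via Lemma~\ref{lengliminf}. The paper's argument works for an arbitrary family of ideals $\mathcal I_\eps$ with $V(\mathcal I_*)=\{0\}$, which is the generality in which the lemma is stated and later used (e.g.\ in the proof of Proposition~\ref{gilelim}). If you only intend the point-based case, you should say so explicitly; otherwise the reduction/Rouch\'e mechanism is needed.
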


In order to prove this Lemma, we need to use the notion
of integral closure of an ideal. 
\begin{defn}
\label{intclos}
The \emph{integral closure} $\bar{\mathcal I}$ 
of an ideal $\mathcal I$ of a ring $\mathcal A$
is the set of all $f \in \mathcal A$ such that there exist $m \in \N^*$
and $a_i \in \mathcal I^{m-i}$, $0 \le i \le m-1$ such that
$$
f^m + \sum_{i=0}^{m-1} a_i f^i = 0.
$$
We say that an ideal $\mathcal J\subset \mathcal I$ 
is a \emph{reduction} of
$\mathcal I$ if and only if $\bar{\mathcal J}=\bar{\mathcal I}$.
\end{defn}

It follows from the Brian\c con-Skoda Theorem \cite{Br-Sk} that 
$$
\bar{\mathcal I} = \left\{ u \in \mathcal O(\Omega) : |u| \le C 
 \max_i  |\psi_{a,i}|, \mbox{ near each } a \in \Omega
\right\},
$$
see e.g. \cite[Corollary 10.5]{De2}.

Thus 
$G_{\bar{\mathcal I}}^\Omega = G_{\mathcal I}^\Omega$. This provides
many examples of distinct ideals with the same Green function, and
is at the root of the phenomena of non-convergence.

\begin{proof*}{\it Proof of Lemma \ref{glb}.}

There exists a reduction $\mathcal J$ of ${\mathcal I_*}$ generated
by exactly $n$ functions, say $f^1, \dots, f^n\in \mathcal O(\Omega)$
(see e.g. \cite[Chapter VIII, Lemma 10.3, p. 394]{De2}). 
Let $f:= (f^1, \dots, f^n)$.
Since $G_{\mathcal J}= G_{\bar {\mathcal J}}= G_{{\mathcal I_*}}$, 
\cite[Theorem 2.5]{Ra-Si} implies $G_{{\mathcal I_*}}=
\log \|f\| +O(1)$, and the multiplicity of the mapping $f$ at $0$
equals $e({\mathcal I_*})$. 

By definition of $\liminf_{\eps \to 0} \mathcal I_\eps$, there are 
functions $f_\eps^j$ tending to $f^j$ locally uniformly on 
$\Omega$, for $1\le j \le n$. Then let
$$
\mathcal J_\eps := \langle f_\eps^1, \dots , f_\eps^n \rangle 
\subset \mathcal I_\eps.
$$
Let $m_{\eps,k}$ be the multiplicity at the point 
$a_{\eps,k}\in V(\mathcal J_\eps)$ of the mapping $f_\eps$. Therefore,
using for instance \cite[Chap. 2, Theorem 1, p. 60]{DA} (statement 8),
$$
\mathcal J_\eps \supset \cap_k \frak M_{a_{\eps,k}}^{m_{\eps,k}}
=: \mathcal K_\eps,
$$
which implies 
$$
G_{\mathcal I_\eps } \ge G_{\mathcal J_\eps } \ge
G_{\mathcal K_\eps } \ge \sum_k m_{\eps,k} G_{a_{\eps,k}}.
$$
By 
Rouch\'e's theorem (see e.g. \cite[\S 5.2]{Ts}), when $|\eps|<\eps_0$, 
$\sum_k m_{\eps,k} \le e({\mathcal I_*})$. Since each
$G_{a_{\eps,k}}\to G_{0}$ uniformly on $\Omega \setminus \omega$,
we have 
$$
G_{a_{\eps,k}} \ge \left( 1 + \frac\eta{e({\mathcal I_*})}
\right) G_{0}
$$
on  $\Omega \setminus \omega$ for $|\eps|<\eps_0$ and so,
$$
G_{\mathcal I_\eps } \ge
\sum_k m_{\eps,k} \left( 1 + \frac\eta{e({\mathcal I_*})}
\right) G_{0} \ge (e({\mathcal I_*}) +\eta) G_{0}.
$$
\end{proof*}

\begin{proof*}{\it Proof of Proposition \ref{gilelim}.}

As in the proof of Lemma \ref{glb} above, if 
$\mathcal J$ is a reduction of
${\mathcal I_*}$ and $\mathcal J =\langle f^1, \dots, f^n\rangle$,
\begin{equation}
\label{glogf}
G_{{\mathcal I_*}}=\log \|f\| +O(1). 
\end{equation}
Given some 
$\Omega'$ relatively compact in $\Omega$, we can assume 
$\sup_{\Omega'}\|f\| <1$. Then for $|\eps|<\eps_0$, 
$\sup_{\Omega'}\|f_\eps\| < 1$, where $f_\eps = (f_\eps^1, \dots ,
f_\eps^n)$, again as above. Thus for $z \in \Omega'$,  
\begin{equation}
\label{logfgp}
\log\|f_\eps(z)\| \le G_{\mathcal I_\eps}^{\Omega'} (z).
\end{equation}
By Lemma \ref{glb}, for all $z \in \partial \Omega'$, for $|\eps|<\eps_0$, 
$$
G_{\mathcal I_\eps}(z) \ge (e({\mathcal I_*}) +1) 
\inf_{\Omega \setminus \Omega'} G_{0}(z)=: A.
$$
Since $G_{\mathcal I_\eps}-A$ is maximal on $\Omega \setminus \{0\}$,
nonnegative on $\partial \Omega'$ and $G_{\mathcal I_\eps}-A= 
G_{\mathcal I_\eps}^{\Omega'}+O(1)$ near $0$, we have 
$$
G_{\mathcal I_\eps}-A \ge
G_{\mathcal I_\eps}^{\Omega'}
$$
on $\Omega'$, so \eqref{logfgp} gives, for $z \in \Omega'$, 
$$
\log\|f_\eps(z)\| +A \le G_{\mathcal I_\eps}(z).
$$
Since $f_\eps$ converges to $f$
uniformly on $\Omega'$, $\log\|f(z)\| +A \le 
\liminf_{\eps \to 0} G_{\mathcal I_\eps}(z)$ on $\Omega'$, so 
\eqref{glogf} implies $G_{\mathcal I_*} \le 
\liminf_{\eps \to 0} G_{\mathcal I_\eps}(z) +O(1)$.
\end{proof*}

{\bf  Remark.}  If $G_{\mathcal I_\eps}$ converges uniformly
to $g$ on compact subsets of $\Omega \setminus \{0\}$, then
Proposition \ref{gilelim} implies that $G_{{\mathcal I_*}} \le g$.

\subsection{Uniform convergence}

We start with a sufficient condition for uniform convergence
that will be useful in particular in Section \ref{ex}.
We need a bit of shorthand from \cite{Ce-Po}.

\begin{defn}
\label{defequiv}
Let $u_1, u_2 \in PSH(\Omega)$, such that $u_1^{-1}\{-\infty\} = u_2^{-1}\{-\infty\} = \{0\}$.
We say that $u_1$ and $u_2$ are \emph{equivalent} near $0$, and we write $u_1 \sim_0 u_2$, if
and only if there exists a neighborhood $U$ of $0$ such that $u_1-u_2|_U \in L^\infty(U)$.
\end{defn}

This implies that $(dd^c u_1)^n(\{0\}) = (dd^c u_2)^n(\{0\}) $.

\begin{lemma}
\label{boundcv}
Suppose that there exists a function $G$ from $\Omega$ to $[-\infty,0]$
and a constant $C>0$ such that for any $\delta \in (0,\delta_0]$, there exists $\eps(\delta)>0$
such that for any $\eps$ with $|\eps|<\eps(\delta)$, for any $z$ such that
$\|z\|=\delta$, 
$$
\left| G_{S_{\eps}} (z) - G(z)\right| \le C.
$$
Then $\lim_{\eps\to0} G_{S_{\eps}} (z)  = g(z)$, uniformly on compacta of
$\Omega \setminus \{0\}$, and clearly $g \sim_0 G$.
\end{lemma}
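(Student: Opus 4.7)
The approach is to propagate the uniform sphere estimate $|G_{S_\eps}-G|\le C$ to the whole region $\Omega_\delta:=\Omega\setminus\overline{B(0,\delta)}$ using the maximum principle for maximal plurisubharmonic functions. The key point is that once the poles $S_\eps$ are inside $B(0,\delta)$, the Green function $G_{S_\eps}$ is maximal plurisubharmonic on $\Omega_\delta$, so its values there are governed by its boundary data on $\partial\Omega\cup\{\|z\|=\delta\}$.

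First I would fix $\delta\in(0,\delta_0]$ and choose $\eps(\delta)$ as in the hypothesis, shrinking it further if necessary to guarantee $S_\eps\subset B(0,\delta)$. The function $G_{S_\eps}$ then satisfies $(dd^c G_{S_\eps})^n=0$ on $\Omega_\delta$, vanishes on $\partial\Omega$, and lies within $C$ of $G$ on $\{\|z\|=\delta\}$. Combined with Lemma \ref{roughest}, which bounds $G_{S_\eps}$ from above and below by multiples of $G_0$ away from the origin, this gives a uniform-in-$\eps$ $L^\infty$ bound on the family $\{G_{S_\eps}\}$ on every compact $K\subset\Omega\setminus\{0\}$.

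Next, I would compare two small parameters $\eps_1,\eps_2$. On $\partial\Omega_\delta$ one has $|G_{S_{\eps_1}}-G_{S_{\eps_2}}|\le 2C$ on $\{\|z\|=\delta\}$ (by triangle inequality) and $=0$ on $\partial\Omega$. Applying the maximum principle (the plurisubharmonic function $G_{S_{\eps_1}}-(G_{S_{\eps_2}}+2C)$ is dominated by its boundary values against the maximal function $G_{S_{\eps_1}}$, and symmetrically) yields $|G_{S_{\eps_1}}-G_{S_{\eps_2}}|\le 2C$ throughout $\Omega_\delta$. By standard $L^1_{\mathrm{loc}}$-compactness of plurisubharmonic functions, any sequence $\eps_j\to 0$ admits a subsequence along which $G_{S_{\eps_j}}$ converges; using continuity of maximal plurisubharmonic functions and their stability under monotone/decreasing-regularized limits, the convergence can be upgraded to locally uniform on $\Omega\setminus\{0\}$. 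Passing to the limit in the sphere estimate gives $|g-G|\le C$ on each $\{\|z\|=\delta\}$, so $g\sim_0 G$, and maximality of $g$ on $\Omega\setminus\{0\}$ is inherited.

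The main obstacle is to upgrade subsequential convergence to the genuine full limit stated in the conclusion. The max-principle comparison only shows that any two subsequential limits $g_1,g_2$ differ by at most $2C$ on $\Omega\setminus\{0\}$, not that they coincide; the hypothesis in the present form allows the boundary values of $G_{S_\eps}$ on $\{\|z\|=\delta\}$ to oscillate inside the $C$-tube around $G$. To close this gap one must invoke the additional structure present in the applications (for instance, that the boundary values on spheres actually stabilize as $\eps\to 0$, or that the Monge--Amp\`ere mass at $0$ pins down the limit among maximal plurisubharmonic competitors), which forces uniqueness of the subsequential limit and hence existence of the full uniform limit $g$.
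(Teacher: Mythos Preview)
Your diagnosis of the obstacle is exactly right, but your proposed resolution is wrong: no ``additional structure from the applications'' is needed. The lemma is true as stated, and the paper closes the gap with a simple trick you missed.

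The paper uses a \emph{multiplicative} comparison instead of your additive one. The point is that $G$ blows down to $-\infty$ at the origin: by the hypothesis combined with Lemma~\ref{roughest}, $G(z)\le \tfrac12 G_0(z)+C$ on small spheres. Hence, given $\eta>0$, one can choose $\delta_1\in(0,\delta_0)$ so small that on $\{\|z\|=\delta\}$ with $\delta<\delta_1$ one has $\eta|G(z)|>2C$, which forces
\[
(1+\eta)G_{S_{\eps_1}}(z)\le G_{S_{\eps_2}}(z)\le (1-\eta)G_{S_{\eps_1}}(z)
\]
whenever $|\eps_1|,|\eps_2|<\eps(\delta)$. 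Now $(1+\eta)G_{S_{\eps_1}}$ is plurisubharmonic and negative, $G_{S_{\eps_2}}$ is maximal on $\Omega\setminus\overline{B(0,\delta)}$, and both vanish on $\partial\Omega$; so the inequality propagates to all of $\Omega\setminus B(0,\delta)\supset\Omega\setminus B(0,\delta_0)$. Since $\eta>0$ is arbitrary, this is a genuine Cauchy condition and yields uniform convergence on compacta of $\Omega\setminus\{0\}$.

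Your additive route $|G_{S_{\eps_1}}-G_{S_{\eps_2}}|\le 2C$ cannot be sharpened because the constant $2C$ is fixed; the multiplicative route converts the fixed additive error $C$ into a multiplicative factor $1\pm\eta$ with $\eta\to 0$ by pushing the comparison sphere toward the singularity, where $|G|\to\infty$. (Incidentally, your justification of the comparison principle via ``the plurisubharmonic function $G_{S_{\eps_1}}-(G_{S_{\eps_2}}+2C)$'' is misstated: that difference is not plurisubharmonic. The correct argument, here and in the paper, is to compare the psh function on one side against the maximal psh function on the other.)
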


\begin{proof}
It is enough to see that for any $\delta_0>0$, for any $\eta >0$, 
there exists  $\eps_0>0$
such that for any $\eps_1, \eps_2$ with $|\eps_1|, |\eps_2|<\eps_0$, for 
any $z \in \Omega \setminus B(0,\delta_0)$, 
\begin{equation}
\label{cauchyseq}
(1+\eta) G_{S_{\eps_1}} (z) \le G_{S_{\eps_2}} (z) \le (1-\eta) G_{S_{\eps_1}} (z).
\end{equation}
By the hypothesis, and by Lemma \ref{roughest}, $G(z) \le \frac12 G_{0}(z)+C $. 
Therefore there exists $\delta_1\in (0, \delta_0)$ such that for $\|z\|=\delta<\delta_1$,
$$
(1+\eta) G(z) \le  G(z) - C < G(z) + C  \le (1-\eta) G(z).
$$
Using the hypothesis once again, there exists $\eps(\delta)>0$
such that for any $\eps$ with $|\eps|<\eps(\delta)$, for any $z$ such that
$\|z\|=\delta$, \eqref{cauchyseq} holds.
Both functions $G_{S_{\eps_1}}$ and $G_{S_{\eps_2}}$ are maximal
plurisubharmonic, and equal to $0$ on $\partial \Omega$, so those
inequalities extend to $\Omega \setminus B(0,\delta) \supset \Omega \setminus B(0,\delta_0)$.
\end{proof}

\begin{proof*}{\it Proof of Propositions \ref{glimunif} and \ref{uniflimsup}.}

Denote 
$\mathcal I^*:= \limsup_{\eps \to 0} \mathcal I_\eps$.
Let $h\in \mathcal I^*$, $\sup_{\Omega'}|h| <1$; let $(h_{\eps_j})_j$
be a sequence of holomorphic functions converging uniformly to $h$
 such that $h_{\eps_j} \in \mathcal I_{\eps_j}$. Then 
 $\log |h_{\eps_j}| \le G_{\mathcal I_{\eps_j}}^{\Omega'}$ on
 $\Omega'$ and so, as in the proof of Proposition \ref{gilelim},
 $$
 \log |h_{\eps_j}| \le G_{\mathcal I_{\eps_j}}^{\Omega} +A.
 $$
 Therefore $\log|h| \le g+A$ and thus, applying this to any generator
 of $\mathcal I^*$, $G_{\mathcal I^*} \le g$. 
 
 Proposition \ref{glimunif} then follows as a special case. 
\end{proof*}

\begin{proof*}{\it Proof of Corollary \ref{coridineq}.}
Let
$ g:= \lim_{\eps\to0} G_{\mathcal I_\eps}$,
$ \mathcal I:=\lim_{\eps\to0}  \mathcal I_\eps$.
Since we have uniform convergence on any compact subset of
$\Omega \setminus \omega$, it is easy to show that $g$ also
is maximal plurisubharmonic on any such compactum, and therefore
on the whole of $\Omega \setminus \{0\}$. So $(dd^cg)^n = \mu \delta_0$,
and by Proposition \ref{massdecr}, 
$$
\mu = (dd^c g)^n (\Omega) \le \liminf_{\eps\to0} (dd^c G_{\mathcal
  I_\eps})^n (\Omega) = N= \ell (\mathcal I) \le e(\mathcal I)=(dd^c
G_{\mathcal I})^n (\Omega), 
$$
which implies the claimed inequality between the Monge-Amp\`ere measures.
\end{proof*}

\section{Convergence and Non-Convergence}
\label{secproofs}

\subsection{Proof of Theorem \ref{thmgci}}
\begin{proof*}

Since the family $(\|\Psi_\eps\|)_\eps$ is bounded on $\overline\Omega$,
there is a constant $C$ such that $\log \|\Psi_\eps\|-C\in PSH_-(\Omega)$, and it admits logarithmic singularities at the points $a_j^\eps$, thus
$\log \|\Psi_\eps\|-C\le G_\eps$. 

Since $\Psi_\eps$ has $n$ components, it is well-known that
$\log \|\Psi_\eps\|$ is a 
maximal plurisubharmonic function on $\Omega \setminus S_\eps$ (by composition
with the holomorphic map, it is enough to check it for $u(z):=\log\|z\|^2$,
an elementary computation).  If $a\in \Psi_0^{-1}\{0\}$, then for any small
enough $\delta>0$, $0< \min_{\partial B(a,\delta)} \| \Psi_0\|$, so if 
$\Psi_\eps^{-1}\{0\}\cap B(a,\delta)$ was empty, uniform convergence and 
maximality of $\log \|\Psi_\eps\|$ would imply that 
$$
\min_{\overline B(a,\delta)} \log \| \Psi_\eps\|> -1+\log (\min_{\partial B(a,\delta)} \| \Psi_0\|) 
>-\infty
$$
for $|\eps|$ small enough, which contradicts $\lim  \|\Psi_\eps(a)\|=0$. 

Since all the points of $ \Psi_\eps^{-1}\{0\}$ tend to $0$, we have 
$ \Psi_0^{-1}\{0\}=\{0\}$, and for any compact $K\subset \overline \Omega \setminus \{0\}$,
any $\eps$ close enough to $0$, $\min_K \|\Psi_\eps\|\ge c_K >0$, 
thus $G_\eps \le \log \| \Psi_\eps\| - \log c_K$. 
Since $G_S^\Omega (z) =-\infty$
iff $z \in S$, and $G_S(z) \le \log\|z-s\| +O(1)$ in a
neighborhood of each $s \in S$ \cite{De1}, \cite{Lel},
using \cite[Lemma 4.1]{Ra-Si}, we see that 
\begin{equation}
\label{geps}
G_\eps \le \log \|\Psi_\eps\|+O(1),
 \end{equation}
where the $O(1)$ is independent of $\eps$.

On the other hand, by the very definition of $\mathcal I_0$ and of
its Green function, $G_{\mathcal I_0}\ge \log \|\Psi_0\|+O(1)$. By
\cite[Theorem 2.5]{Ra-Si}, $G_{\mathcal I_0}\le \log \|\Psi_0\|+O(1)$
in a neighborhood of $0$, and again \cite[Lemma 4.1]{Ra-Si}
shows that 
\begin{equation}
\label{gzero}
G_{\mathcal I_0} \le \log \|\Psi_0\|+O(1) \mbox{ on } \Omega.
 \end{equation}
 
The hypothesis of uniform convergence of the $\Psi_\eps^j$ shows
that we can apply Lemma \ref{boundcv} with $G=\log \|\Psi_0\|$, therefore
$\lim_{\eps\to0}  G_\eps = g = \log \|\Psi_0\| +O(1)$,
with uniform convergence on compacta of $\overline \Omega \setminus \{0\}$. Furthermore, $g|_{\partial \Omega}=0$, and the uniform convergence
implies that $g$ is maximal plurisubharmonic on $\Omega \setminus \{0\}$,
so we can apply \cite[Lemma 4.1]{Ra-Si} in both directions to conclude
that $g= G_{\mathcal I_0}$, which proves the second statement in
Theorem \ref{thmgci}.

Now we prove the statement about ideals. For any $f \in \mathcal I_0$,
$$
f = \sum_{i=1}^n h_i \Psi_0^i = 
\lim_{\eps\to0} \sum_{i=1}^n h_i \Psi_\eps^i ,
$$
with uniform convergence on compacta, so that 
$\mathcal I_0 \subset \liminf_{\eps\to 0} \mathcal I_\eps$.

To prove the reverse inclusion, we need to use the characterization 
of an ideal $\mathcal I_\Psi :=\langle  \Psi^1, \dots,  \Psi^n \rangle$ by 
multidimensional residues. For simplicity, we assume that
$\Psi^{-1}(0)\cap \Omega = \{0\}$, so that a holomorphic function
belongs to $\mathcal I_\Psi$ if and only if its germ at $0$ is in the
ideal of germs with the same generators, $\mathcal I_{\Psi,0}$ (recall that
$\Omega$ is contractible and bounded). 
We take the next definition from \cite[\S 5.1, p. 14]{Ts}.

\begin{defn}
\label{residue}
Let $\Psi$ be a holomorphic mapping $\overline\omega \longrightarrow \C^n$
where $\omega$ is a bounded neighborhood of $0$ in $\C^n$, with 
$\Psi^{-1}(0)\cap  \overline\omega = \{0\}$. We choose a real
$n$-dimensional chain  
$$
\Gamma=\Gamma^\delta (\Psi):= 
\left\{ z \in \omega : |\Psi^j|=\delta_j, 1 \le j \le n \right\},
$$
where $\delta_j >0$ are small enough so that $\Gamma^\delta (\Psi)$
be relatively compact in $\omega$, with its orientation determined 
by the condition $d(\arg \Psi_1) \wedge \cdots \wedge d(\arg \Psi_n) \ge 0$
(by Sard's lemma, we can choose values of $\delta$ such that $\Gamma$
is smooth). 
Let $h$ be holomorphic
on $\omega$.
The \emph{local residue} of $h$ at 
the point $0$ is 
$$
\mbox{res}_{0,\Psi} (h) := \frac1{(2\pi i)^n} \int_\Gamma
\frac{h dz_1 \wedge \cdots \wedge dz_n}{\Psi^1 \cdots  \Psi^n}.
$$
\end{defn}

This residue is well-defined in the sense that it does not depend 
on the choice of a particular $\Gamma$ \cite[p. 15]{Ts}. In fact it
can be computed by integration on $\partial \omega$. We omit the
formula, but a consequence is the "continuity principle"
\cite[\S 5.4, Proposition, p. 20]{Ts}. We apply it to the 
situation of Theorem \ref{thmgci}.
By the hypothesis on $\Psi_\eps$, for 
$ |\eps| \le \eps_0$, $S_\eps=\Psi_\eps^{-1}(0)\subset\omega$
and is made up of isolated zeroes. 

\begin{prop}
\label{contpple}
Let $h$ be holomorphic in a neighborhood of $\overline \omega$, then
$$
\lim_{\eps\to0} \sum_{p \in S_\eps} \mbox{res}_{p, \Psi_\eps} h
= \mbox{res}_{0, \Psi_0} h.
$$
\end{prop}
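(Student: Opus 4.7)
The strategy is to invoke the integral representation of the sum of residues on $\partial\omega$ promised just after Definition \ref{residue}, so that the proposition reduces to uniform convergence of an integrand on a fixed contour. First, shrinking $\omega$ if necessary, I may assume $\Psi_0^{-1}(0)\cap\overline\omega=\{0\}$, so that $m:=\min_{\partial\omega}\|\Psi_0\|>0$. The uniform convergence $\Psi_\eps\to\Psi_0$ on $\overline\omega$ then gives $\min_{\partial\omega}\|\Psi_\eps\|\ge m/2$ for all sufficiently small $|\eps|$, and since $a_j^\eps\to 0$ we also have $S_\eps\subset\omega$ for small $\eps$. Holomorphicity together with uniform $C^0$ convergence on a neighborhood of $\overline\omega$ yields, via Cauchy estimates, uniform $C^k$ convergence of $\Psi_\eps$ to $\Psi_0$ on $\overline\omega$ for every $k$.

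Next I would apply the global residue formula: whenever $\Psi$ is holomorphic in a neighborhood of $\overline\omega$, has isolated zeros in $\omega$, and does not vanish on $\partial\omega$,
$$
\sum_{p\in\Psi^{-1}(0)\cap\omega}\mathrm{res}_{p,\Psi}(h)=\int_{\partial\omega} h\,K(\Psi),
$$
where $K(\Psi)$ is a Bochner--Martinelli-type $(2n-1)$-form built polynomially from the $\Psi^j$, $\overline{\Psi^j}$ and their first-order differentials, divided by $\|\Psi\|^{2n}$ (this is the formula the excerpt omits, cf.\ \cite{Ts}). Applied to $\Psi_\eps$ this recovers the left-hand side of Proposition \ref{contpple}; applied to $\Psi_0$, whose only zero in $\omega$ is $0$, it recovers $\mathrm{res}_{0,\Psi_0}(h)$. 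Since $\|\Psi_\eps\|\ge m/2$ on $\partial\omega$ and $\Psi_\eps\to\Psi_0$ in $C^1(\partial\omega)$, the integrand $h\,K(\Psi_\eps)$ converges uniformly to $h\,K(\Psi_0)$ on the compact set $\partial\omega$; exchanging limit and integral concludes the proof.

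\textbf{Main obstacle.} The only non-routine ingredient is the global residue representation on $\partial\omega$; once it is at hand, the remainder is just continuity of a smooth integrand on a fixed compact cycle with a denominator bounded away from $0$. No subsequence extraction is needed, because the claimed limit is a single well-defined number independent of any choice of $\delta$ or of a particular contour. A slightly more hands-on alternative, if one preferred to avoid Bochner--Martinelli, would be to fix $\delta>0$ small so that $\Gamma^\delta(\Psi_0)$ is a smooth compact cycle in $\omega$ with $|\Psi_0^j|=\delta_j$, observe that for small $|\eps|$ the map $\Psi_\eps$ is zero-free on $\Gamma^\delta(\Psi_0)$ and Stokes-cobordant to $\Gamma^\delta(\Psi_\eps)$ in $\omega\setminus S_\eps$, and then apply the same uniform-convergence argument on this single $\eps$-independent cycle.
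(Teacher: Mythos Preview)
Your proposal is correct and follows exactly the approach the paper indicates: the paper does not actually prove this proposition but simply cites Tsikh \cite[\S 5.4, Proposition, p.~20]{Ts}, noting just before the statement that the residue ``can be computed by integration on $\partial\omega$'' and that the continuity principle is a consequence of this (omitted) formula. Your argument supplies precisely those omitted details---the Bochner--Martinelli-type global residue representation on the fixed boundary $\partial\omega$, the lower bound on $\|\Psi_\eps\|$ there, and uniform convergence of the integrand---so your proof and the paper's (cited) proof are the same.
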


The characterization of the ideal $\mathcal I_\Psi$ follows
immediately from the Local Duality Theorem \cite[\S 5.6, p. 23]{Ts}.

\begin{theorem}
\label{caracresid}
A germ $h$ of holomorphic function belongs to the ideal $\mathcal I_{\Psi,0}$ of germs of functions at $0$ generated by the components of $\Psi$ 
if and only if for any holomorphic germ $g$ at $0$, 
$\mbox{res}_{0,\Psi} (hg) =0$.
\end{theorem}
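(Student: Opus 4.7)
The plan is to prove the two directions separately; the forward direction is essentially formal, while the converse is the substantive content and rests on local duality for complete intersections.

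For the forward direction, assume $h \in \mathcal{I}_{\Psi,0}$, so $h = \sum_{i=1}^n h_i \Psi^i$ with $h_i$ holomorphic near $0$. By linearity of the residue it suffices to show $\mbox{res}_{0,\Psi}(h_i g \Psi^i) = 0$ for each $i$ and every holomorphic germ $g$. The integrand of Definition \ref{residue} then becomes
$$\frac{h_i g\, dz_1 \wedge \cdots \wedge dz_n}{\prod_{j \ne i} \Psi^j},$$
which is holomorphic in a neighborhood of $\{\Psi^i = 0\}$. Since the residue is independent of the radii $\delta_j$ by Stokes's theorem (as long as the integrand stays holomorphic across the swept-out region), letting $\delta_i$ grow while the other $\delta_j$ are held fixed shows that the integral vanishes, by the one-variable Cauchy theorem applied in the $\Psi^i$-direction.

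For the converse, set $Q := \mathcal{O}_{\C^n,0}/\mathcal{I}_{\Psi,0}$. Since $\Psi^{-1}(0) \cap \omega = \{0\}$, the sequence $(\Psi^1,\ldots,\Psi^n)$ is a regular sequence in the regular local ring $\mathcal{O}_{\C^n,0}$, so $Q$ is a finite-dimensional local $\C$-algebra, and in fact Gorenstein with one-dimensional socle (a standard property of complete intersections in regular local rings). The forward direction shows that the residue descends to a well-defined $\C$-linear functional $\tau : Q \to \C$, and hence the bilinear form $B([g],[h]) := \tau([gh])$ on $Q \times Q$ is well-defined. The converse direction is exactly the statement that $B$ is non-degenerate: if $\tau([gh]) = 0$ for all $[g]$ then $[h]=0$.

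The main obstacle is this non-degeneracy, which is the content of local Grothendieck duality for complete intersections. The approach I would follow is to exhibit a socle generator $\sigma \in Q$ with $\tau(\sigma) \ne 0$. The natural candidate is the class of the Jacobian $J_\Psi := \det(\partial \Psi^i/\partial z_j)$: a theorem of Scheja--Storch asserts that $[J_\Psi]$ generates the socle of $Q$ for any complete intersection, and the transformation law of residues, applied to generic regular values of $\Psi$ near $0$ (which exist by Sard's theorem), gives $\mbox{res}_{0,\Psi}(J_\Psi) = \deg_0(\Psi) \ne 0$, since $\Psi$ is finite at the origin. Non-degeneracy of $B$ then follows by a standard Artinian-algebra argument: for any nonzero $[h] \in Q$, the principal ideal $[h]\,Q$ is nonzero and therefore meets the one-dimensional socle, so some $[g]$ yields $[gh] = c\sigma$ with $c\ne 0$, whence $\tau([gh]) = c\,\tau(\sigma) \ne 0$, contradicting the hypothesis.
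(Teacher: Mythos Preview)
Your argument is correct in substance. Note, however, that the paper does not actually prove Theorem~\ref{caracresid}: it is quoted as the Local Duality Theorem from Tsikh \cite[\S 5.6, p.~23]{Ts} and used as a black box. So there is no ``paper's own proof'' to compare against; you have supplied one where the authors chose to cite.

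What you have written is the standard route to local duality for zero-dimensional complete intersections: the forward direction by cancellation and Cauchy's theorem, the converse via the Gorenstein property of $Q=\mathcal O_{\C^n,0}/\mathcal I_{\Psi,0}$, the Scheja--Storch identification of the socle with $\C\cdot[J_\Psi]$, and the computation $\operatorname{res}_{0,\Psi}(J_\Psi)=\operatorname{mult}_0(\Psi)>0$. One small wording issue in your forward direction: the usual argument lets $\delta_i\to 0$ (not ``grow''), using that the integrand is holomorphic across $\{\Psi^i=0\}$ so the cycle can be collapsed; equivalently, one writes the integral in local $\Psi$-coordinates via the Jacobian and applies the one-variable Cauchy theorem in the $\Psi^i$-slot. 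Either way the point is the same, and your sketch is sound.
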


We return to the proof. Suppose that $f\in \mathcal O(\Omega)$
and there exists sequences $(f_j)\subset \mathcal O(\Omega)$
and $\eps_j\to 0$
such that $f_j\in \mathcal I_{\eps_j}$, $f_j \to f$
as $j \to\infty$, uniformly on compacta of $\Omega$. 
Take any holomorphic germ $g$ at $0$, and
 $\omega$  a neighborhood of $0$ small enough so that $g$ is 
 holomorphic in a neighborhood of $\overline \omega$ and we can apply
 Proposition \ref{contpple}. Then for any $p \in S_{\eps_j}$,
$\mbox{res}_{p, \Psi_{\eps_j}} f_j g =0$ by the "only if" part of
Theorem \ref{caracresid}, therefore 
$\mbox{res}_{p, \Psi_{0}} f g = \lim_{j\to\infty}0$ 
by Proposition \ref{contpple}, and the "if" part
of Theorem \ref{caracresid} shows that $f\in \mathcal I_0$. 
\end{proof*}

\subsection{Proof of Theorem \ref{thmiff}}
\begin{proof*}

In the ''only if" direction,
we prove a slightly stronger statement than in the Theorem: we only
assume that
$g=\lim_{\eps\to0} G_{\mathcal I_\eps}$ in $L^1_{loc}$, and
we don't
assume that $(\mathcal I_{\eps})_\eps$ converges. Write $\mathcal I=
\liminf_{\eps\to 0} \mathcal I_\eps$.  Then, by Lemma \ref{lengliminf}, 
$N\le \ell (\mathcal I)$. On the other hand,
by Proposition \ref{massdecr},
$
(dd^c g)^n (\Omega) \le  \liminf_{\eps\to0} (dd^c G_{\eps})^n(\Omega) =
N$. 

On the other hand, the Hilbert-Samuel multiplicity of an ideal,
which is invariant under integral closure
\cite[Theorem 10.4 (b)]{De2}, like its Green function, is
indeed related to it.

\begin{prop}
\label{massgreen}
If $V(\mathcal I) = \{a\}$, 
$(dd^c G_{\mathcal I})^n= e(\mathcal I) \delta_a$.
\end{prop}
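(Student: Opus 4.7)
The plan is to reduce to the case of a complete intersection ideal, where the Monge-Ampère mass of $G_{\mathcal I}$ can be computed explicitly from the transformation formula for the complex Monge-Ampère operator, and then transfer the result to a general $\mathcal I$ via an invariance-under-bounded-perturbation argument.

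First I would take $a = 0$ and, mirroring the opening of the proof of Lemma \ref{glb}, choose a reduction $\mathcal J = \langle f^1, \ldots, f^n \rangle$ of $\mathcal I$: an $n$-generated (complete intersection) ideal with $\bar{\mathcal J} = \bar{\mathcal I}$, whose existence is provided by \cite[Chapter VIII, Lemma 10.3, p.~394]{De2}. The Briançon-Skoda identity $G_{\bar{\mathcal J}} = G_{\mathcal J}$ recorded after Definition \ref{intclos} then gives $G_{\mathcal I} = G_{\bar{\mathcal I}} = G_{\bar{\mathcal J}} = G_{\mathcal J}$, while the invariance of the Hilbert-Samuel multiplicity under integral closure \cite[Theorem 10.4 (b)]{De2} yields $e(\mathcal J) = e(\mathcal I)$. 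So it suffices to establish $(dd^c G_{\mathcal J})^n = e(\mathcal J)\,\delta_0$.

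Next I would exploit the two ingredients already used in the proof of Lemma \ref{glb}: \cite[Theorem 2.5]{Ra-Si} gives $G_{\mathcal J}(z) = \log\|f(z)\| + O(1)$ in a neighborhood of $0$, where $f = (f^1, \ldots, f^n)$, and simultaneously identifies the local multiplicity $m_0(f)$ of the map $f$ with $e(\mathcal J)$. Applying the standard transformation formula for the complex Monge-Ampère operator under a holomorphic map with isolated zero (the pullback of $(dd^c\log\|w\|)^n = \delta_0$ by $f$), one obtains
$$
(dd^c \log\|f\|)^n = e(\mathcal J)\,\delta_0
$$
on a neighborhood of $0$.

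Finally, since $V(\mathcal J) = \{0\}$, $G_{\mathcal J}$ is maximal plurisubharmonic on $\Omega \setminus \{0\}$, so $(dd^c G_{\mathcal J})^n = c\,\delta_0$ for some $c \ge 0$. To identify $c = e(\mathcal J)$ I would invoke the invariance of the residual Monge-Ampère mass at an isolated singularity under bounded perturbations: $G_{\mathcal J}$ and $\log\|f\|$ are equivalent in the sense of Definition \ref{defequiv}, so their residual masses at $0$ must coincide. This last step is the main obstacle, since the Monge-Ampère operator is not linear; a concrete way to carry it out is to sandwich $G_{\mathcal J}$ between $(1-\eta)\log\|f\| + C_\eta$ and $(1+\eta)\log\|f\| - C_\eta$ on a small ball $B$ around $0$ (using that both $\log\|f\|$ and $G_{\mathcal J} - \log\|f\|$ are bounded on $\partial B$), then apply the Bedford-Taylor comparison principle (see \cite[Chap. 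III]{De2}) to trap $\int_B (dd^c G_{\mathcal J})^n$ between $(1\pm\eta)^n \int_B (dd^c\log\|f\|)^n$, and finally let $\eta \to 0$.
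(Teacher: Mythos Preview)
The paper does not give its own proof of this proposition: it simply records that ``A proof can be found for instance in \cite[Lemma 2.1, p.~4]{De3}.'' Your proposal therefore goes beyond what the paper does, supplying an argument built entirely from tools already assembled in the paper (the reduction to a parameter ideal as in Lemma~\ref{glb}, the Brian\c con--Skoda identification $G_{\mathcal I}=G_{\bar{\mathcal I}}$, invariance of $e(\cdot)$ under integral closure, the Rashkovskii--Sigurdsson estimate $G_{\mathcal J}=\log\|f\|+O(1)$, and the transformation rule $(dd^c\log\|f\|)^n=m_0(f)\,\delta_0$). The reduction and the computation of the mass of $\log\|f\|$ are exactly the right ideas, and the chain $G_{\mathcal I}=G_{\mathcal J}\sim_0\log\|f\|$, $e(\mathcal I)=e(\mathcal J)=m_0(f)$ is correct.

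Two remarks on the final step. First, the fact you need---that $u_1\sim_0 u_2$ forces $(dd^c u_1)^n(\{0\})=(dd^c u_2)^n(\{0\})$---is already asserted in the paper immediately after Definition~\ref{defequiv}, so you could simply quote that line and dispense with the sandwich argument. Second, if you do want to spell it out, your sandwich $(1+\eta)\log\|f\|-M\le G_{\mathcal J}\le(1-\eta)\log\|f\|+M$ is correct on a small ball, but the inequality between point masses that you then need (namely, $v\le u$ near $0$ implies $(dd^c v)^n(\{0\})\ge(dd^c u)^n(\{0\})$ for psh functions with an isolated $-\infty$ point) is not literally the Bedford--Taylor comparison principle for bounded psh functions; it is Demailly's comparison theorem for residual Monge--Amp\`ere masses, which is indeed in \cite[Chap.~III]{De2}. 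With that citation made precise, your argument goes through and gives a clean alternative to the bare reference the paper provides.
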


A proof can be found for instance in \cite[Lemma 2.1, p. 4]{De3}.

Now suppose that 
$\ell (\mathcal I) < e(\mathcal I)$.
Then 
$$
(dd^c g)^n (\Omega) \le N\le \ell (\mathcal I)< e(\mathcal I)=(dd^c
G_{\mathcal I})^n(\Omega), 
$$
thus $g \neq G_{\mathcal I}$. 

Note that the same proof can be done when $\eps$ tends to $0$
along a subset. We omit the details. 

Conversely,
let $\psi_1,\ldots,\psi_n$ be generators of $\mathcal I$; we may
assume them to be defined on a neighborhood of
$\overline\Omega$. Since $\mathcal I_\eps\to\mathcal I$, for any
$\eps\in(0,\eps_0)$ there exist functions $\psi_{k,\eps}\in\mathcal
I_\eps$ converging to $\psi_k$ uniformly on compacts of $\Omega$,
$1\le k\le n$. Then by Rouch\'e's theorem (see e.g. \cite[\S 5.2,
Proposition 3, p. 16]{Ts}), each mapping
$\Psi_\eps=(\psi_{1,\eps},\ldots,\psi_{n,\eps})$ has isolated zeros,
say at points $a_j(\eps)$, $1\le j\le N(\eps)$, and their total
number, counted with the corresponding multiplicities $m_j(\eps)$,
equals the multiplicity of the mapping $\Psi=(\psi_1,\ldots,\psi_n)$
at $0$. The latter equals $N$ because having $n$ generators is
equivalent to the condition $e(\mathcal I)=\ell(\mathcal I)$, and
$\ell(\mathcal I)=\lim \ell(\mathcal I_\eps)=N$ by Lemmas
\ref{lenglimsup}, \ref{lengliminf}.  

By construction, $\mathcal I_{\Psi_\eps}\subseteq {\mathcal I}_\eps$, and  
$$
\ell({\mathcal I}_{\Psi_\eps})\le \sum_j \ell
\left( ({\mathcal I}_{\Psi_\eps})_{a_j(\eps)} \right) \le N = \ell(\mathcal I_\eps),
$$
so both ideals coincide. Therefore $G_\eps=G_{\mathcal I_\eps}= G_{\mathcal I_{\Psi_\eps}}$
 and the family $\mathcal I_{\Psi_\eps}$ satisfies the Uniform Complete Intersection condition, so an application of Theorem~\ref{thmgci} completes the proof.
\end{proof*}

\section{Example : two and three points systems}
\label{ex}

\subsection{The case of two points}

We begin by sketching what happens in the case where 
$S_\eps = \{p_1(\eps), p_2(\eps)\}$ (two distinct points), with 
$\lim_{\eps\to0} p_1(\eps) = \lim_{\eps\to0} p_2(\eps) =0\in \Omega$. 
By compactness, there is a subsequence $\eps_j$ such that 
$[p_1(\eps_j)- p_2(\eps_j)]\to \nu \in \mathbb P^{n-1} \C$,
where $[z]$ denotes the class of $z$ in $\mathbb P^{n-1} \C$, for $z\in \C^n\setminus \{0\}$.  From
now on we assume that $\eps \in \{\eps_j\}$ and drop the subscript.
By applying translations tending to $0$, we may assume 
that $p_1(\eps)=0$.

Pick orthonormal
bases $\mathcal B:= (v_1, \dots,v_n)$ such that $[v_1]=\nu$, and 
$\mathcal B^\eps:=(v_1^\eps, \dots,v_n^\eps)$ tending to $(v_1,
\dots,v_n)$ such that  
$[p_2(\eps)]= [v_1^\eps]$, so that $p_2(\eps)=\rho_\eps v_1^\eps$,
$\rho_\eps\to0$. 
Let $(\xi_1,\dots,\xi_n)$, resp.
$(\xi_1^\eps,\dots,\xi_n^\eps)$ stand for the coordinates in $\mathcal B$,
resp. $\mathcal B^\eps$. Finally, for $R>0$,
 $D_R := \{ z: \max|\xi_j| <R\}$, 
$D_R^\eps := \{ z: \max|\xi_j^\eps| <R\}$.

\begin{lemma}\cite{Ed}
\label{lemprod}
If $S_i \subset \Omega_i \subset \C^{n_i}$, $i= 1, 2$, then
$$
G_{S_1\times S_2}^{\Omega_1\times \Omega_2} (z_1,z_2) = 
\max \left( G_{S_1}^{\Omega_1} (z_1) , G_{S_2}^{\Omega_2} (z_2)\right) .
$$
\end{lemma}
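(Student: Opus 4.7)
The plan is to prove both inequalities separately, with the maximality of the candidate
$$
M(z_1,z_2):=\max\bigl(G_{S_1}^{\Omega_1}(z_1),\,G_{S_2}^{\Omega_2}(z_2)\bigr)
$$
(viewed on $\Omega_1\times\Omega_2$ via the coordinate projections) serving as the engine for the harder direction.

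For the easy inequality $M\le G_{S_1\times S_2}^{\Omega_1\times\Omega_2}$, I would verify that $M$ is admissible in the Perron family defining the right-hand side. It is plurisubharmonic and non-positive since each $G_{S_i}^{\Omega_i}$ is, and at each pole $(a,b)\in S_1\times S_2$ the standard upper estimate $G_{S_i}^{\Omega_i}(w)\le\log|w-c|+O(1)$ for $w$ near $c\in S_i$ gives
$$
M(z_1,z_2)\le \log\max(|z_1-a|,|z_2-b|)+O(1)\le \log\|(z_1-a,z_2-b)\|+O(1),
$$
so $M$ lies in the Perron family and the inequality follows.

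For the reverse inequality, the key observation is that $M$ is maximal plurisubharmonic on the open set $D:=(\Omega_1\times\Omega_2)\setminus(S_1\times S_2)$. A function depending only on the $z_1$-variables has $(dd^c\,\cdot\,)^{n_1+n_2}\equiv 0$ on $\Omega_1\times\Omega_2$ for rank reasons ($dd^c$ has rank at most $n_1$), so the lifts $\tilde G_1,\tilde G_2$ of the factor Green functions are maximal off $S_1\times\Omega_2$ and $\Omega_1\times S_2$ respectively; at every point of $D$ at least one of the two is finite in a neighborhood, and maximality of $M=\max(\tilde G_1,\tilde G_2)$ on $D$ then follows from the standard fact that the maximum of two maximal plurisubharmonic functions is maximal. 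Now for any competitor $u$ in the Perron family defining $G_{S_1\times S_2}^{\Omega_1\times\Omega_2}$ and any $\eps\in(0,1)$, I would compare $u$ with $(1-\eps)M$ on $D$ via the maximum principle for maximal plurisubharmonic functions. Near $\partial(\Omega_1\times\Omega_2)$ the hyperconvexity of each $\Omega_i$ forces $G_{S_i}^{\Omega_i}\to 0$, hence $\limsup(u-(1-\eps)M)\le 0$. Near each pole $p=(a,b)\in S_1\times S_2$, admissibility of $u$ gives $u(z)\le\log\|z-p\|+O(1)$, while the lower bound $M(z)\ge\log\|z-p\|-O(1)$ follows from $G_{S_i}^{\Omega_i}\ge\sum_{a'\in S_i}G_{\{a'\}}^{\Omega_i}$ (a sum of admissible functions is admissible), so
$$
u(z)-(1-\eps)M(z)\le \eps\log\|z-p\|+O(1)\longrightarrow -\infty.
$$
The maximum principle then gives $u\le(1-\eps)M$ on $D$, extending trivially across the discrete pole set, and letting $\eps\to 0$ followed by taking supremum over $u$ yields $G_{S_1\times S_2}^{\Omega_1\times\Omega_2}\le M$.

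The main obstacle is the pole-comparison step: the natural admissibility estimates give $u-M$ merely \emph{bounded} near each pole, which is not enough to run the maximum principle directly. The $\eps$-regularization $(1-\eps)M$ is the key technical device, converting boundedness into honest $-\infty$-decay at a controlled cost that vanishes in the limit. A secondary subtlety worth checking is the maximality of $M$ on the coincidence locus $\{\tilde G_1=\tilde G_2\}$, but this reduces to the standard stability of maximality under the operation $\max$.
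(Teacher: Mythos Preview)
The paper does not supply its own proof of this lemma; it is quoted from Edigarian \cite{Ed}, so there is no in-paper argument to compare against. Your overall strategy (show $M$ is a competitor for the easy inequality; show $M$ is maximal on $D$ and run a comparison argument with an $\eps$-regularisation for the hard one) is the natural one, and the $\eps$-trick for the pole behaviour is handled correctly.

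There is, however, a genuine gap in the hard direction: the ``standard fact that the maximum of two maximal plurisubharmonic functions is maximal'' is \emph{false}, even for functions of the exact shape you use. On $\C^2$ the lifts $u_1(z)=|z_1|^2$ and $u_2(z)=|z_2|^2$ are both maximal by your rank argument, yet $w=\max(|z_1|^2,|z_2|^2)$ is not maximal on $\D^2$: the function $\phi(z)=\tfrac25(|z_1|^2+|z_2|^2)+\tfrac1{10}$ is strictly psh, satisfies $\phi\le w$ on $\partial(\D^2)$ (check on $|z_1|=1$: $\phi=\tfrac12+\tfrac25|z_2|^2\le\tfrac9{10}<1=w$), but $\phi(0)=\tfrac1{10}>0=w(0)$. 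So the appeal to stability of maximality under $\max$ cannot stand as written.

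What rescues the lemma is the extra information that each $G_{S_i}^{\Omega_i}$ is maximal \emph{in its own $n_i$ variables} on $\Omega_i\setminus S_i$, not merely that its lift has low-rank Hessian on the product. One way to exploit this: approximate $M$ from above by $w_k=\tfrac1k\log\bigl(e^{kG_1}+e^{kG_2}\bigr)$ and compute the complex Hessian of $w_k$ as a rank-one perturbation of the block-diagonal matrix $\mathrm{diag}\bigl(p\,\mathrm{Hess}_{z_1}G_1,\;q\,\mathrm{Hess}_{z_2}G_2\bigr)$; every term in the resulting $(n_1+n_2)\times(n_1+n_2)$ determinant then carries a factor $\det(\mathrm{Hess}_{z_1}G_1)$ or $\det(\mathrm{Hess}_{z_2}G_2)$, and these vanish precisely because $(dd^c G_i)^{n_i}=0$. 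In the counterexample above, $|z_i|^2$ is \emph{not} maximal in its own single variable, so the corresponding determinant survives. Edigarian's original argument bypasses maximality via a direct envelope construction, but either route requires genuinely using $(dd^c G_i)^{n_i}=0$ rather than a nonexistent general principle about $\max$.
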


Thus
$$
G_{S_\eps}^{D_R^\eps}(z) =
\max \left( \log\left|\frac{\xi_1^\eps(\rho_\eps-\xi_1^\eps)}{R^2-\xi_1^\eps \bar \rho_\eps}\right|,
\log\left|\frac{\xi_j^\eps}{R}\right|, 2\le j\le n
\right).
$$
There are $0<R_1<R_2$ such that for $|\eps|$ small enough, 
$D_{R_1}^\eps \subset \Omega \subset D_{R_2}^\eps$, so
$$
G_{S_\eps}^{D_{R_1}^\eps}(z) \ge 
G_{S_\eps}^{\Omega}(z)\ge G_{S_\eps}^{D_{R_2}^\eps}(z).
$$
The above inequality and an elementary computation show that, setting
$g(z):= \max (\log|\xi_1|^2, \log|\xi_j|,  2\le j\le n)$, there are constants $C_1,C_2\in \R$ such that if $z\in \Omega \setminus B(0,r_0)$
and $|\eps| < \eps_0(r_0)$, 
$$
g(z) +C_1 \le G_{S_\eps}^{\Omega}(z) \le g(z) +C_2.
$$
By Lemma \ref{boundcv}, $G_{S_\eps}^{\Omega}$ converges locally
uniformly on $\Omega \setminus \{0\}$ to $G=g+O(1)$.  This means in particular that $G(\zeta v_1)= 2\log|\zeta| + O(1)$, while for any
vector $v$ not colinear to $v_1$, $G(\zeta v)= \log|\zeta| + O(1)$. As a
consequence, if there are two subsequences $\{\eps_j\}$, $\{\tilde\eps_j\}$
such that $v_1 \neq \tilde v_1$ (with the obvious notation), then
$G_{S_\eps}^{\Omega}$ doesn't admit any limit, while if $\lim_{\eps\to0} [p_2(\eps)]$ exists (i.e. any convergent subsequence converges to the same
limit), then $\lim_{\eps\to0} G_{S_\eps}^{\Omega}=G=g+O(1)$, with local
uniform convergence. This necessary and sufficient condition for convergence
can be expressed by saying that $p_2^\eps$ converges in the blow-up at $0$
of $\C^n$.  It is the same as for convergence of two-point Lempert
functions \cite[Theorem 3.3]{Th-Tr2}. 
\vskip.5cm

\subsection{When all directions coincide}

\begin{proof*}{\it Proof of Theorem \ref{gen3pt}, part (2).}

Consider the following family of examples, where $\alpha \in \C$:
$$
S_\eps := \left\{ (0,0); (\eps, \alpha \eps^2) ;  (-\eps, \alpha \eps^2)\right\}.
$$
The ideal of functions vanishing on $S_\eps$ is clearly generated by 
$\{ z_1 (z_1^2-\eps^2) ,  z_2-\alpha z_1^2 \}$, so its limiting ideal 
is $< z_1^3,  z_2-\alpha z_1^2 >$ which depends on $\alpha$. 
This is a case where the Uniform Complete Intersection condition holds, and the 
limiting Green functions will be equivalent to $\max \left( 3 \log |z_1|, \log |z_2-\alpha z_1^2 |\right) $,
so they are distinct for distinct values of $\alpha$.
\end{proof*}

\subsection{Three points, two directions : limit ideal}

To prove the first part of Theorem \ref{gen3pt}, we first deal with the statement about convergence of ideals.  

Consider a function holomorphic in a neighborhood of the origin in $\C^n$ vanishing in two points $a, b$, close enough
to $0$.  Renormalizing if needed, we assume that $f$ is
holomorphic on the ball $B(a,1)$.  Let $d:=\|a-b\|$,
$v:=\frac1d (b-a)$, $f_{a,b}(\zeta):= f(a+\zeta v) \in \mathcal O(\overline \D)$. Then
$$
\int_{\partial \D} \frac{f_{a,b}(\zeta)}{\zeta(\zeta-d)}d\zeta=0,
$$
so if we have a sequence $f_\eps$ vanishing on $a_\eps,b_\eps\to 0$,
and converging uniformly on compacta to $f$, and if $v_\eps \to v$, then
$f(0,0)=0$ and
$$
\int_{\partial \D} \frac{f(\zeta v)}{\zeta^2}d\zeta=0,
$$
thus $\partial_v f(0,0)=0$. This is another special case of
the first statement of Theorem \ref{thmgci}.
In case (i) of Theorem \ref{gen3pt}, we may assume $i=3$, $j=2$
and 
perform this reasoning with $(a_\eps,b_\eps)= (a_1^\eps, a_2^\eps)$
and $(a_\eps,b_\eps)= (a_1^\eps, a_3^\eps)$;
the relevant vectors converge to 
two independent directions, $v_3$ and $v_2$ respectively,
we are in dimension  $2$, therefore all first
partial derivatives of $f$ must vanish at the origin.  This means that 
$\limsup_{\eps\to0} \mathcal I_\eps \subset \frak M_0^2$.  

To get the converse inclusion, it will be enough to approximate the generators of $\frak M_0^2$.  Since this ideal (as well as the 
notion of convergence) is invariant under invertible linear maps,
we may reduce ourselves to the case where $v_3=[1:0]$
and $v_2=[0:1]$.  Then $a_1^\eps=(x_1(\eps),x_2(\eps))$,
where $x_j(\eps)=o(1)$, $j=1,2$; and
$a_2^\eps-a_1^\eps= (\rho_2(\eps), \delta_2(\eps))$,
$a_3^\eps-a_1^\eps= (\delta_3(\eps), \rho_3(\eps))$, where
$\delta_j(\eps) =o(\rho_j(\eps))$, $j=2,3$. Then
$$
z_1^2 = \lim_{\eps\to0} (z_1-x_1(\eps)-\rho_2(\eps))
\left( z_1- x_1(\eps)- \frac{\delta_3(\eps)}{\rho_3(\eps)}(z_2- x_2(\eps))\right)
\in \liminf_{\eps\to0} \mathcal I_\eps,
$$
\begin{multline*}
z_1 z_2 = \lim_{\eps\to0} 
\left( z_1- x_1(\eps)- \frac{\delta_3(\eps)}{\rho_3(\eps)}(z_2- x_2(\eps))\right)
\left( z_2- x_2(\eps)- \frac{\delta_2(\eps)}{\rho_2(\eps)}(z_1- x_1(\eps))\right)\\
\in \liminf_{\eps\to0} \mathcal I_\eps,
\end{multline*}
and a similar computation takes care of $z_2^2$.

\subsection{First reduction : changing the axes}

The rest of this section is devoted to the more intricate proof
of the convergence of Green functions, which will be carried out by reduction to two model cases, given in the proposition below.
We use the notation from Definition \ref{defequiv}.

\subsubsection{Model Cases}
\begin{prop}
\label{model}
With the hypotheses and notations of Theorem \ref{gen3pt},
suppose furthermore that $\Om=\D^2$ and that there are functions
$\delta_i(\eps)$ with $\delta_i(\eps)/\eps \to 0$, $1\le i \le 4$
and $\omega(\eps) \to 0$ such that 
$$
a_1^\eps = (0,0), \quad a_2^\eps = (\eps, \delta_1(\eps)),
$$
and either
\begin{enumerate}
\item
$a_3^\eps = (\delta_2(\eps),\eps+\delta_3(\eps))$
\item
or $a_3^\eps = (\delta_4(\eps)\omega(\eps),\delta_4(\eps))$,
\end{enumerate}
then there exists plurisubharmonic functions $g_1$, $g_2$ such that 
in case (i), $i =1, 2$, 
$\lim_{\eps\to0} G_{\eps}=g_i$, uniformly on compacta of $\D^2  \setminus \{0\}$,
and $g_1 \sim_0 H$, $g_2\sim_0 F$, where $H$ and $F$ are described below. Consequently, $(dd^c g_i)^2 = 3 \delta_0$, $i=1,2$.
\end{prop}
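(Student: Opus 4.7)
The plan is to apply Lemma \ref{boundcv} with the candidate model function $G = H$ in case (i) and $G = F$ in case (ii). All that is required is then to produce a constant $C$ (independent of $\eps$) such that
$$
|G_\eps(z) - G(z)| \le C \quad\text{whenever }\|z\|=\delta,\ |\eps|<\eps(\delta),
$$
for every sufficiently small $\delta>0$. Once this two-sided bound is established, Lemma \ref{boundcv} delivers the locally uniform convergence on $\D^2\setminus\{0\}$ to a plurisubharmonic limit $g_i$ with $g_i \sim_0 G$.

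For the lower bound $G_\eps \ge G - C_1$, the strategy is to construct an explicit candidate $u_\eps \in PSH_-(\D^2)$, equivalent to $G$ away from $0$, whose singularities satisfy $u_\eps(z) \le \log\|z - a_j^\eps\| + O(1)$ uniformly in $\eps$ near each pole; then $u_\eps$ competes in the extremal problem defining $G_\eps$. In case (i) the three poles sit at the common scale $|\eps|$, and a natural candidate is a maximum of logarithms of products of linear forms adapted to the three asymptotic directions $\tilde v_i$, weighted so that each $G_{a_j^\eps}$-type singularity is matched; in case (ii) the hierarchy $\delta_4(\eps)\ll \eps$ forces a two-scale construction in which the pair $(a_1^\eps,a_3^\eps)$ is first resolved on the scale $\delta_4(\eps)$ before being matched to $a_2^\eps$ on the scale $\eps$.

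For the upper bound $G_\eps \le G + C_2$, I would dominate $G_\eps$ by a plurisubharmonic function whose restriction to $\|z\|=\delta$ is comparable to $G$, and then invoke maximality of $G_\eps$ on $\D^2 \setminus S_\eps$ together with the boundary condition $G_\eps|_{\partial\D^2}=0$. A practical route is to enclose $S_\eps$ in a product configuration to which Lemma \ref{lemprod} applies, compute the Green function of the product explicitly, and transfer the resulting bound back through a biholomorphism close to the identity, tracking constants independently of $\eps$.

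With the two bounds in place, Lemma \ref{boundcv} supplies the locally uniform convergence and the equivalence $g_i \sim_0 H$ (resp.\ $F$). Therefore $g_i$ is maximal plurisubharmonic on $\D^2 \setminus \{0\}$, so $(dd^c g_i)^2 = \mu_i\,\delta_0$, and Proposition \ref{massdecr} yields $\mu_i \le 3$. The matching lower bound $\mu_i = 3$ can be obtained either by direct computation of $(dd^c H)^2(\{0\})$ and $(dd^c F)^2(\{0\})$ from their explicit forms, or by upgrading the $L^1_{\rm loc}$ convergence $G_\eps \to g_i$ to weak convergence of Monge--Amp\`ere measures $(dd^c G_\eps)^2 = \sum_{j=1}^3 \delta_{a_j^\eps} \rightharpoonup (dd^c g_i)^2$, which is permitted by the uniform membership of the family in Cegrell's class $\mathcal F(\D^2)$ already exploited in the proof of Proposition \ref{massdecr}.

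The main obstacle I anticipate is the sharpness of the upper bound: naive comparisons using products or sums of single-pole Green functions give the correct $\tfrac{3}{2}\log|\zeta|$ rate along generic directions but overshoot the $2\log|\zeta|$ rate along the three special directions $\tilde v_i$. Achieving the sharp constants requires a careful choice of model functions that correctly partition the mass $3$ among the three special lines, and the bifurcation into (i) and (ii) precisely reflects whether all three poles decay at the same rate or whether one pair is asymptotically much closer than the third point—hence the need to treat the two model situations separately.
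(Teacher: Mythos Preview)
Your overall framework---invoking Lemma \ref{boundcv} once a two-sided bound $|G_\eps - G| \le C$ on spheres $\|z\| = \delta$ is available---matches the paper's strategy, as does your lower-bound plan: the paper's Lemma \ref{lower} does exactly what you describe, taking $u_\eps$ to be a maximum of $\frac12\log|z_1 z_2 \psi_\eps(z)|$ (a product of three linear forms) and the Green function of the four-point product set $S_\eps \cup \{(\rho(\eps),\eps)\}$ computed via Lemma \ref{lemprod}. The paper also first applies Corollary \ref{corpert} with explicit near-identity linear maps to replace the perturbed configurations by the exact model systems $\{(0,0),(\eps,0),(0,\eps)\}$ and $\{(0,0),(\rho(\eps),0),(0,\eps)\}$, a reduction you do not mention but which is routine.

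The genuine gap is your upper bound. Enclosing $S_\eps$ in a product set $S'_\eps \supset S_\eps$ yields $G_{S'_\eps} \le G_\eps$, not the reverse, since more poles make the Green function more negative; so Lemma \ref{lemprod} cannot produce an upper bound for $G_\eps$ this way---indeed, in the paper this is precisely part of the \emph{lower} bound. Your appeal to ``maximality of $G_\eps$ on $\D^2 \setminus S_\eps$'' is likewise inverted: maximality of $G_\eps$ lets you push competitors \emph{below} $G_\eps$, not bound $G_\eps$ from above. The paper's method for the upper bound is entirely different: it constructs explicit analytic discs $\varphi:\D\to\D^2$ passing through all three poles and through the target point $z$ (Lemmas \ref{above3sym}, \ref{above3sing}), so that $G_\eps\circ\varphi$ is a negative subharmonic function on $\D$ with logarithmic poles at the preimages, whence $G_\eps(z)$ is dominated by the one-variable Green function with those poles. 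This gives the rough estimate $G_\eps \le \frac32\log\max(|z_1|,|z_2|) + C$ on the generic region; the sharp $2\log$ rate along the special directions is obtained by a separate disc construction near the axes (Lemma \ref{aboveD0}, and in the degenerate case a Neil-parabola-type disc $\zeta\mapsto(\lambda\zeta^3+\cdots,\zeta^2+\cdots)$ hitting one pole twice), and the two regimes are then glued by the three-circle theorem along radial discs $\zeta\mapsto(\zeta,\tfrac{z_2}{z_1}\zeta)$. You correctly flag the upper bound as the main obstacle in your last paragraph, but the analytic-disc mechanism that actually resolves it is missing from your proposal.
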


We now  construct the auxiliary functions $H(z)$ and $F(z)$.
The following gives a partition of the extended complex plane : 
$$
\mathcal D_3 := \bar D(-1, \frac12), \quad \mathcal D_1 := \bar \D \setminus \mathcal D_3,
\quad \mathcal D_2 := \C \cup \{\infty\} \setminus (\mathcal D_1 \cup \mathcal D_3).
$$
We now define a partition of $\D^2  \setminus \{0\}$.
\begin{multline*}
\label{partition}
D_0 := \left\lbrace z \in \D^2  \setminus \{0\} : |z_2| \le |z_1|^2, \mbox{ or } |z_1| \le  |z_2|^2,
 \mbox{ or } |z_1+z_2| \le  |z_1|^2 
\right\rbrace  \\
D_j := \left\lbrace z \in \D^2  \setminus (D_0 \cup \{0\}) : \frac{z_2}{z_1} \in \mathcal D_j 
\right\rbrace, \quad j= 1, 2 , 3.
\end{multline*}
Here is the piecewise definition of our auxiliary function.
\begin{equation}
\label{Hformula}
H(z):= 
\begin{cases}
\max(2\log|z_1|, 2\log|z_2|) & \text{ if } z \in D_0,\\
\log|z_1| + \frac12 \log|z_2| & \text{ if } z \in D_1,\\
\frac12 \log|z_1| +  \log|z_2| & \text{ if } z \in D_2,\\
\log|z_1| + \frac12 \log|z_1+z_2| & \text{ if } z \in D_3.
\end{cases}
\end{equation}

The definition of the auxiliary function $F$
requires another partition of the bidisk.
\begin{equation*}
\label{partsing}
D'_0 := \left\lbrace z \in \D^2  \setminus \{0\} : |z_2| \le  |z_1|^{3/2} \mbox{ or } |z_1| \le |z_2|^2
\right\rbrace , \quad
D'_1 := \D^2  \setminus (D'_0 \cup \{0\}).
\end{equation*}
\begin{equation}
\label{Fformula}
F(z):= 
\begin{cases}
\max(2\log|z_1|, 2\log|z_2|) & \text{ if } z \in D'_0,\\
\frac12 \log|z_1| + \log|z_2| & \text{ if } z \in D'_1.
\end{cases}
\end{equation}

\subsubsection{Changing the coordinates}

To reduce ourselves to the cases occurring in Proposition \ref{model},
we will need to make linear changes of coordinates.

\begin{defn}
\label{defadm}
We say that a function $G : \Omega \to \R$ is \emph{admissible}
if and only if
\begin{equation}
\label{gooddecr}
\forall R>1, \exists C(R) \mbox{ such that }
\forall z \in \Omega \cap \frac1R \Omega, 
\left| G(z) - G(Rz) \right| \le C(R).
\end{equation}
\end{defn}

In particular, $G\sim_0 G'$ for any dilation $G'$ of $G$.  
When $G$ is unbounded, this forces it to have logarithmic growth.
We will
see that this onerous looking hypothesis is actually satisfied in
the examples we are interested in.

\begin{lemma}
\label{lincv}
Let $\Omega$ be a bounded hyperconvex domain, $0\in \Om$.
Suppose that $(S_\eps)\subset \Omega$ is  such that 
for any $R>0$, $\lim_{\eps\to0} G_{RS_{\eps}} (z)  = g_R(z)$, uniformly on compacta of
$\Omega \setminus \{0\}$, and that there exists a function $G$ on $\Omega$
such that for any $R>0$, $g_R\sim_0 G$, and $G$ is admissible. 
Let $\Phi$ be an invertible linear map of $\C^n$, and 
$\Om_1$ a domain satisfying the same hypotheses as $\Om$.

 Then $\lim_{\eps\to0} G^{\Om_1}_{\Phi^{-1}(S_{\eps})} (z)  = \tilde g(z)$, uniformly on compacta of
$\Omega_1 \setminus \{0\}$, with $\tilde g(z) \sim_0 g \circ \Phi$. 
\end{lemma}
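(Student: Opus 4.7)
The plan is to use domain monotonicity and linear invariance of Green functions to sandwich $G^{\Omega_1}_{\Phi^{-1}(S_\eps)}$ between Green functions of the form $G_{RS_\eps}^\Omega$ to which the hypothesis applies, and then to conclude via Lemma \ref{boundcv} with auxiliary function $G\circ\Phi$.

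First I would choose scalars $0<\nu'<\nu$ (which exist because $\Omega$, $\Omega_1$ are bounded with $0$ in their interior) such that
$$\Phi^{-1}(\nu'\Omega)\subset \Omega_1\subset \Phi^{-1}(\nu\Omega).$$
Domain monotonicity of the Green function (a bigger domain produces a smaller function) then yields, for $z\in\Phi^{-1}(\nu'\Omega)$,
$$G_{\Phi^{-1}(S_\eps)}^{\Phi^{-1}(\nu\Omega)}(z)\le G_{\Phi^{-1}(S_\eps)}^{\Omega_1}(z)\le G_{\Phi^{-1}(S_\eps)}^{\Phi^{-1}(\nu'\Omega)}(z).$$
Combining the linear change-of-variables identity $G_{\Phi^{-1}(T)}^{\Phi^{-1}(D)}(z)=G_T^D(\Phi(z))$ with the dilation identity $G_T^{\lambda D}(w)=G_{\lambda^{-1}T}^D(\lambda^{-1}w)$, the outer terms become $G_{\nu^{-1}S_\eps}^\Omega(\nu^{-1}\Phi(z))$ and $G_{(\nu')^{-1}S_\eps}^\Omega((\nu')^{-1}\Phi(z))$ respectively.

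Next I would let $\eps\to 0$. By hypothesis, applied with $R=\nu^{-1}$ and $R=(\nu')^{-1}$, the two outer Green functions converge uniformly on compacta of $\Omega\setminus\{0\}$ to $g_{\nu^{-1}}$ and $g_{(\nu')^{-1}}$. For $z$ on a sphere $\{\|z\|=\delta\}$ with $\delta$ sufficiently small, the inclusions fixed in the first step ensure that $\nu^{-1}\Phi(z)$ and $(\nu')^{-1}\Phi(z)$ lie in a single fixed compactum of $\Omega\setminus\{0\}$, so the convergence is uniform there. Using $g_R\sim_0 G$ at $R=\nu^{-1},(\nu')^{-1}$, both limits equal $G(\nu^{-1}\Phi(z))+O(1)$ and $G((\nu')^{-1}\Phi(z))+O(1)$ respectively; and the admissibility of $G$, applied at the two fixed scales $\nu$ and $\nu'$, replaces each of these by $G(\Phi(z))+O(1)$. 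The upshot is a constant $C$ and a function $\eps(\delta)$ such that, for $|\eps|<\eps(\delta)$ and $\|z\|=\delta$,
$$\bigl|G_{\Phi^{-1}(S_\eps)}^{\Omega_1}(z)-G(\Phi(z))\bigr|\le C.$$
This is exactly the hypothesis of Lemma \ref{boundcv} on the domain $\Omega_1$, with poles $\Phi^{-1}(S_\eps)$ and auxiliary function $G\circ\Phi$, so the lemma produces the claimed local uniform convergence of $G_{\Phi^{-1}(S_\eps)}^{\Omega_1}$ to some $\tilde g\in PSH(\Omega_1)$ with $\tilde g\sim_0 G\circ\Phi$.

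The only delicate point is ensuring the admissibility step really produces constants independent of $\eps$ and of $z$ on each sphere. Because $\nu$ and $\nu'$ are chosen once and for all at the beginning, admissibility supplies the single constants $C(\nu)$ and $C(\nu')$, and this is precisely where that otherwise awkward-looking hypothesis is used; all remaining bookkeeping (choice of $\nu,\nu'$, scaling of Green functions, extraction of the uniform annular bound) is routine.
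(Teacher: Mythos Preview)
Your argument is correct and follows essentially the same route as the paper: sandwich $\Phi(\Omega_1)$ between two dilates of $\Omega$, use linear invariance and the dilation identity to reduce to the hypothesis on $G^\Omega_{RS_\eps}$, then invoke $g_R\sim_0 G$ and admissibility of $G$ to obtain the uniform bound needed for Lemma~\ref{boundcv}. Your scalars $\nu',\nu$ are exactly the paper's $R_1,R_2$, and your final conclusion $\tilde g\sim_0 G\circ\Phi$ is equivalent to the stated $\tilde g\sim_0 g\circ\Phi$ since $g=g_1\sim_0 G$.
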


\begin{proof}
Let $0<R_1<R_2$ be such that 
$R_1 \Omega \subset \Phi(\Omega_1) \subset R_2 \Omega$.
Since $G^{\Omega_1}_{\Phi^{-1}(S_{\eps})} (z)=G^{\Phi(\Omega_1)}_{S_{\eps}} (\Phi(z))$, we have
$$
G^{\Omega}_{R_2^{-1}S_{\eps}} (\frac1{R_2}\Phi(z))
=
G^{R_2\Omega}_{S_{\eps}} (\Phi(z))
\le
G^{\Omega_1}_{\Phi^{-1}(S_{\eps})} (z) 
\le
G^{R_1\Omega}_{S_{\eps}} (\Phi(z))
=
G^{\Omega}_{R_1^{-1}S_{\eps}} (\frac1{R_1}\Phi(z)).
$$
Let $\delta>0$. On $\Omega \setminus B(0,\delta)$, for $\eps$
small enough,
$$
G^{\Omega_1}_{\Phi^{-1}(S_{\eps})} (z)  \ge g_{R_2^{-1}} (\frac1{R_2}\Phi(z)) -1
\ge G(\frac1{R_2}\Phi(z)) -C_1 \ge G(\Phi(z)) -C_2,
$$
and
$$
G^{\Omega_1}_{\Phi^{-1}(S_{\eps})} (z)  \le g_{R_1^{-1}} (\frac1{R_1}\Phi(z)) +1
\le G(\frac1{R_1}\Phi(z)) +C_3 \le G(\Phi(z)) +C_4.
$$
So the hypotheses of Lemma \ref{boundcv} are satisfied, with the function
$G\circ \Phi$ instead of $G$.
\end{proof}

\subsubsection{Perturbations}

The following lemma and its corollary will be useful 
to get rid of higher order
perturbations of the family $(S_\eps)$.

\begin{lemma}
\label{perturb}
Let $\Omega$ be a bounded hyperconvex domain such that the 
function $G^\Omega_{0}$ is Lipschitz in a neighborhood of $\partial \Omega$.
There exists $\delta_0>0$ such that the following holds for any $\delta \in (0,\delta_0]$.

Let $S, S' \subset B(0,\delta) \subset \subset \Omega$,
$\#S=\#S'=N$. Suppose there exists a biholomorphic map $\Phi $ from
$\Omega$ to $\Phi (\Omega)$ such that $\Phi(S)=S'$, and 
$\|\Phi(z)-z\|\le \delta$ for
any $z\in \Omega$ such that $\mbox{dist}(z,\partial \Omega) \ge \delta$.

Then there are constants $C_1, C_2 >0$ depending only on $N$ and $\Omega$
such that for any $z$ verifying $G^\Omega_{\{0\}}(z) \le -C_1 \delta$,
$$
G^\Omega_{S'} (\Phi(z)) \le G^\Omega_{S} (z) + C_2 \delta.
$$
\end{lemma}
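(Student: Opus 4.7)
The plan is to apply the Bedford--Taylor comparison principle to $u(z) := G^\Omega_{S'}(\Phi(z))$ and $v(z) := G^\Omega_S(z) + C_2\delta$ on the sublevel set
\[
D_\delta := \{z \in \Omega : G^\Omega_{\{0\}}(z) < -C_1\delta\}.
\]
First I would fix $C_1$ large enough, depending only on the Lipschitz constant $L$ of $G^\Omega_{\{0\}}$ near $\partial\Omega$, so that $D_\delta \Subset \Omega$ with $\mathrm{dist}(\partial D_\delta, \partial\Omega) \ge C_1\delta/L$; in particular $D_\delta \supset B(0,2\delta) \supset S \cup S'$, and the hypothesis $\|\Phi(z)-z\|\le\delta$ on $\{\mathrm{dist}(\cdot,\partial\Omega)\ge\delta\}$ ensures $\Phi(D_\delta) \subset \Omega$ as soon as $C_1 > L$. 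This makes $u$ well defined, negative, and plurisubharmonic on $D_\delta$.

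The key technical step is a boundary estimate: for $z\in\partial D_\delta$, both $|G^\Omega_S(z)|$ and $|G^\Omega_{S'}(\Phi(z))|$ are at most $C_2\delta/2$. I would obtain this from the sandwich
\[
\sum_{a\in S} G^\Omega_{\{a\}}(z) \le G^\Omega_S(z) \le 0
\]
together with a uniform estimate: for $a\in B(0,\delta)$ and $w$ in the boundary layer of width $\sim\delta$, $|G^\Omega_{\{a\}}(w)| \le K\delta$. This is standard in hyperconvex domains where $G^\Omega_{\{0\}}$ is Lipschitz near $\partial\Omega$: the Lipschitz constant of $G^\Omega_{\{a\}}$ near $\partial\Omega$ is uniform for $a$ in a compact subset of $\Omega$, and together with the boundary values $G^\Omega_{\{a\}}\big|_{\partial\Omega}=0$ this yields $|G^\Omega_{\{a\}}(w)| \le K\,\mathrm{dist}(w,\partial\Omega)$. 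Applying this to $w=z\in\partial D_\delta$ and to $w=\Phi(z)$, which lies in a $\sim\delta$-boundary layer because $\|\Phi(z)-z\|\le\delta$, yields the desired bounds (with $C_2 \ge 2NK/L$ or so).

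Both $u$ and $v$ are plurisubharmonic on $D_\delta$ and maximal on $D_\delta\setminus S$ (for $u$ this uses that $\Phi$ is biholomorphic, so $G^\Omega_{S'}\circ\Phi$ inherits maximality from $G^\Omega_{S'}$), and their Monge--Amp\`ere measures coincide: $(dd^c u)^n = \Phi^*\bigl(\sum_{s\in S'}\delta_s\bigr) = \sum_{a\in S}\delta_a = (dd^c v)^n$ on $D_\delta$. In particular $u-v$ is bounded near each pole, since $u$ and $v$ both have logarithmic singularity of mass $1$ at every $a\in S$. The boundary bound from the previous step gives $u\le 0 \le C_2\delta/2 \le v$ on $\partial D_\delta$, and the Bedford--Taylor comparison principle, applied on $D_\delta\setminus\bigcup_{a\in S}B(a,\rho)$ and passed to the limit $\rho\to 0$ (the logarithmic singularities make the contribution from the small spheres harmless), yields $u\le v$ throughout $D_\delta$. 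Translating back, this is the asserted inequality $G^\Omega_{S'}(\Phi(z))\le G^\Omega_S(z)+C_2\delta$.

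The principal obstacle is the boundary estimate in the second step: the conclusion is only as strong as the uniform boundary regularity of $G^\Omega_{\{a\}}$ as the pole $a$ varies in $B(0,\delta)$, and the linear rate $O(\delta)$ requires a linear-rate decay of the one-pole Green function near the boundary. The Lipschitz hypothesis on $G^\Omega_{\{0\}}$ is tailor-made to supply this uniformly; once it is in hand, the rest is a standard application of the comparison principle for maximal plurisubharmonic functions.
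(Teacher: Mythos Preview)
Your overall architecture is exactly that of the paper: work on a sublevel set of $G^\Omega_{\{0\}}$, show $G^\Omega_{S'}\circ\Phi \le 0 \le G^\Omega_S + C_2\delta$ on its boundary, then invoke a comparison/domination principle (the paper quotes \cite[Lemma~4.1]{Ra-Si} where you invoke Bedford--Taylor directly). The comparison step and the check that $\Phi(D_\delta)\subset\Omega$ are fine.

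The gap is in your boundary estimate. You want $G^\Omega_S(z)\ge -C_2\delta$ for $z\in\partial D_\delta$, and you try to get it via $|G^\Omega_{\{a\}}(w)|\le K\,\mathrm{dist}(w,\partial\Omega)$ applied at $w=z$. But the Lipschitz hypothesis on $G^\Omega_{\{0\}}$ only gives the \emph{one-sided} bound $|G^\Omega_{\{0\}}(z)|\le L\,\mathrm{dist}(z,\partial\Omega)$, hence from $G^\Omega_{\{0\}}(z)=-C_1\delta$ you obtain $\mathrm{dist}(z,\partial\Omega)\ge C_1\delta/L$, a \emph{lower} bound. Nothing in the hypotheses forces $\partial D_\delta$ to lie in a boundary layer of width $O(\delta)$; that would require a Hopf-lemma type estimate $G^\Omega_{\{0\}}(z)\le -c\,\mathrm{dist}(z,\partial\Omega)$, which is not assumed. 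So the implication ``$z\in\partial D_\delta\Rightarrow |G^\Omega_{\{a\}}(z)|\le K\delta$'' does not follow from your chain of inequalities.

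The fix is to bypass $\mathrm{dist}(\cdot,\partial\Omega)$ entirely and compare $G^\Omega_{\{a\}}$ to $G^\Omega_{\{0\}}$ directly, which is precisely what the paper does via its Lemma~\ref{roughest}: for $a\in B(0,\delta)$ and $z$ outside a fixed ball around $0$ (which $\partial D_\delta$ is, once $\delta$ is small), one has $G^\Omega_{\{a\}}(z)\ge 2\,G^\Omega_{\{0\}}(z)$. Summing over $a\in S$ gives $G^\Omega_S(z)\ge 2N\,G^\Omega_{\{0\}}(z)=-2NC_1\delta$ on $\partial D_\delta$, so with $C_2=2NC_1$ you get $G^\Omega_S+C_2\delta\ge 0$ there; since $G^\Omega_{S'}\circ\Phi\le 0$, the boundary inequality follows. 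With this modification your argument goes through and coincides with the paper's.
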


\begin{proof}
Let
$$
\Omega_\delta := \left\lbrace z \in \Omega : 
G^\Omega_{0} (z) < \inf_{\mbox{dist}(z,\partial \Omega) \le \delta} G^\Omega_{0} \right\rbrace .
$$
Since $\Omega_\delta \subset \{  z \in \Omega :  \mbox{dist}(z,\partial \Omega) \ge \delta \}$,
$\Phi (\Omega_\delta) \subset \Omega$.

Since $G^\Omega_{0}$ is Lipschitz in a neighborhood of $\partial \Omega$, 
$\Omega_\delta \supset \{ z \in \Omega : 
G^\Omega_{0} (z) \le -C_1 \delta \}$.

By Lemma \ref{roughest}, for $\delta$ small enough,
on $\partial \Omega_\delta$
$$
2N C_1 \delta + G^\Omega_{S} (z) \ge 
-2N G^\Omega_{0} (z)  + G^\Omega_{S} (z) 
\ge 0  .
$$
So the plurisubharmonic functions $G^\Omega_{S'} \circ \Phi $ and $G^\Omega_{S}  + 2N C_1 \delta$
have the same singularities on the set $S$, the first one is negative and the second one 
positive on $\partial \Omega_\delta$, and the second one is maximal psh; by \cite[Lemma 4.1]{Ra-Si}, we have 
$$
G^\Omega_{S'} \circ \Phi (z) \le G^\Omega_{S} (z)  + 2N C_1 \delta,
$$
for all $z\in \Omega_\delta$, q.e.d.
\end{proof}

\begin{cor}
\label{corpert}
Under the hypotheses of Lemma \ref{perturb}, suppose that we have 
a family $(S_\eps)$ such that $\lim_{\eps\to0} G_{S_{\eps}} (z)  = g(z)$, uniformly on compacta of
$\Omega \setminus \{0\}$, and a family of biholomorphic mappings $\Phi_\eps(z)= z + \Theta_\eps(z)$,
with $\lim_{\eps\to0} \|\Theta_\eps\|_L =0$, where $\|\cdot\|_L$ denote the Lipschitz norm
($\|F\|_L:= \sup_{z\in\Omega }\|F(z)\| + \sup_{z,w\in\Omega } \frac{\|F(z)-F(w)\| }{\|z-w\| }$).

Then $\lim_{\eps\to0} G_{\Phi_{\eps}(S_{\eps})} (z)  = g(z)$, uniformly on compacta of
$\Omega \setminus \{0\}$.
\end{cor}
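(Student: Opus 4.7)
\begin{proof*}{\it Proof plan for Corollary \ref{corpert}.}

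The plan is to apply Lemma \ref{perturb} twice, once with the biholomorphism $\Phi_\eps$ and once with its inverse, and then translate the pointwise two-sided estimate into uniform convergence on compacta by a change of variables.

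First I would set up the parameter controlling the size of the perturbation. Since $S_\eps \to \{0\}$ and $\|\Theta_\eps\|_L \to 0$, set
$$
\delta_\eps := \max\left( \sup_{z\in S_\eps \cup \Phi_\eps(S_\eps)} \|z\|,\ \|\Theta_\eps\|_\infty \right),
$$
so that $\delta_\eps \to 0$. Because $\Phi_\eps$ is an identity-plus-small-Lipschitz perturbation on $\Omega$, the inverse function theorem gives $\Phi_\eps^{-1}(w) = w + \tilde\Theta_\eps(w)$ with $\|\tilde\Theta_\eps\|_L \to 0$ as well; in particular $\|\Phi_\eps^{-1}(w)-w\| \le \delta_\eps$ for all $w$ (after enlarging $\delta_\eps$ by a bounded factor, which is harmless).

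Next I would invoke Lemma \ref{perturb} in both directions. Taking $S = S_\eps$, $S' = \Phi_\eps(S_\eps)$ and $\Phi = \Phi_\eps$ gives
$$
G^\Omega_{\Phi_\eps(S_\eps)}(\Phi_\eps(z)) \le G^\Omega_{S_\eps}(z) + C_2 \delta_\eps
$$
whenever $G^\Omega_0(z) \le -C_1 \delta_\eps$. Swapping the roles of $S$ and $S'$ and using $\Phi_\eps^{-1}$ in place of $\Phi_\eps$ (its hypotheses are verified thanks to the previous paragraph), and then substituting $w = \Phi_\eps(z)$, yields
$$
G^\Omega_{S_\eps}(z) \le G^\Omega_{\Phi_\eps(S_\eps)}(\Phi_\eps(z)) + C_2 \delta_\eps
$$
on the same set. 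Combining gives the two-sided estimate
$$
\left| G^\Omega_{\Phi_\eps(S_\eps)}(\Phi_\eps(z)) - G^\Omega_{S_\eps}(z) \right| \le C_2 \delta_\eps.
$$

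Finally, I would transfer this into uniform convergence. Fix a compact set $K \subset \Omega \setminus \{0\}$. For $\eps$ small enough, the set $K' := \bigcup_{\eps \mbox{ small}} \Phi_\eps^{-1}(K)$ is contained in a compact subset of $\Omega \setminus \{0\}$, and for every $w \in K$ one has $G^\Omega_0(\Phi_\eps^{-1}(w)) \le -C_1 \delta_\eps$. Writing $w = \Phi_\eps(z)$ with $z = \Phi_\eps^{-1}(w)$, the estimate above reads
$$
\left| G^\Omega_{\Phi_\eps(S_\eps)}(w) - G^\Omega_{S_\eps}(\Phi_\eps^{-1}(w)) \right| \le C_2 \delta_\eps.
$$
The hypothesis gives $G^\Omega_{S_\eps} \to g$ uniformly on $K'$, while $g$ is continuous on $\Omega\setminus\{0\}$ (as a uniform limit on compacta of continuous Green functions) and $\Phi_\eps^{-1}(w) \to w$ uniformly on $K$. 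Chaining these three facts produces $G^\Omega_{\Phi_\eps(S_\eps)}(w) \to g(w)$ uniformly on $K$.

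The only slightly delicate step is the verification that $\Phi_\eps^{-1}$ again has the form identity-plus-small-Lipschitz perturbation, so that Lemma \ref{perturb} is applicable in the reverse direction with the same small parameter $\delta_\eps$; everything else is a bookkeeping argument combining the symmetric estimate with continuity of $g$.
\end{proof*}
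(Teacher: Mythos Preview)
Your proof is correct and follows essentially the same route as the paper: apply Lemma \ref{perturb} to both $\Phi_\eps$ and $\Phi_\eps^{-1}$ to obtain the two-sided bound $|G_{\Phi_\eps(S_\eps)}(w) - G_{S_\eps}(\Phi_\eps^{-1}(w))| \le C_2\delta_\eps$, then combine this with the uniform convergence $G_{S_\eps}\to g$ on a slightly enlarged compact set and the continuity of $g$ (as a locally uniform limit of continuous Green functions) to conclude. The paper's write-up is slightly terser---it absorbs your $C_2\delta_\eps$ into an $\eta/4$---but the structure and the one delicate point you flag (that $\Phi_\eps^{-1}$ is again an identity-plus-small perturbation so that the lemma applies in reverse) are identical.
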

\begin{proof}
Given $K\subset \subset \Omega \setminus \{0\}$ and $\eta >0$, choose $K_1$
such that $K\subset \subset K_1\subset \subset \Omega \setminus \{0\}$. 
For $\eps$ small enough, $\Phi_\eps (K) \subset K_1$ and $\Phi_\eps (K_1) \supset K$.

Choose $\eps$ small enough so that for all $z\in K_1$, $|G_{S_{\eps}} (z) - g(z)| <\eta/4$. 
By Lemma \ref{perturb} applied to $\Phi_\eps$ and to $\Phi_\eps^{-1}$, 
for $\eps$ small enough,
$$
G_{S_{\eps}} (\Phi_\eps^{-1}(z)) - \eta/4 \le G_{\Phi_{\eps}(S_{\eps})} (z) \le 
G_{S_{\eps}} (\Phi_\eps^{-1}(z)) + \eta/4.
$$
Because of the continuity of each $G_\eps$ \cite{Lel} and of the uniform
convergence, $g$ is (uniformly) continuous on $K_1$, so that for $\eps$
small enough, $z\in K_1$, $|g(z)-g(\Phi_\eps^{-1}(z)) |< \eta/4$. Putting all the inequalities
together we get $| G_{\Phi_{\eps}(S_{\eps})} (z) -g(z)|<\eta$ for $z\in K$.
\end{proof}

\subsubsection{Reduction to the Model Cases}

\begin{proof*}{\it Proof of Theorem \ref{gen3pt}, part (1).}

By applying Corollary \ref{corpert} to the translations 
$\Phi_\eps(z)=z-a_1^\eps$, we see that we may assume $a_1^\eps=0$
for all $\eps$. 

We need to check that we may apply Lemma \ref{lincv}.

\begin{lemma}
\label{rdiff}
The functions $H$  and $F$ defined after
Proposition \ref{model} are admissible.
\end{lemma}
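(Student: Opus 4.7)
The plan is to verify condition (\ref{gooddecr}) by case analysis on the piecewise definitions of $H$ and $F$. Each piece is a sum $\sum_k c_k\log|L_k(z)|$ with $L_k$ linear in $\{z_1, z_2, z_1+z_2\}$, so on any single piece $D_i$ (resp.\ $D'_i$) one has $H(Rz)-H(z)=d_i\log R$ where $d_i=\sum_k c_k\in\{\tfrac32, 2\}$. Within a single piece, (\ref{gooddecr}) is therefore immediate.

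The real work lies in matching the piecewise formulas at the partition boundaries. I would verify that on the common boundary of any two adjacent pieces the two candidate formulas differ by a bounded constant. For $H$: on $|z_2|=|z_1|^2$ (boundary $D_0$--$D_1$) both give $2\log|z_1|$; on $|z_1|=|z_2|^2$ (boundary $D_0$--$D_2$) both give $2\log|z_2|$; on $|z_2/z_1|=1$ (boundary $D_1$--$D_2$) both give $\tfrac32\log|z_1|$; on $|1+z_2/z_1|=\tfrac12$ (boundary of $D_3$ with $D_1\cup D_2$), since $|z_2/z_1|\in[\tfrac12,\tfrac32]$ there, direct substitution shows the $D_3$ formula and the appropriate $D_1$- or $D_2$-formula differ by an absolute constant; and on $|z_1+z_2|=|z_1|^2$ (boundary $D_0$--$D_3$) the $D_3$ formula equals $2\log|z_1|$, while the constraint $z_2/z_1\in\bar D(-1,\tfrac12)$ valid on $D_3$ forces $|z_2|\asymp|z_1|$, so the $D_0$ formula also equals $2\log|z_1|+O(1)$. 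For $F$ only two boundaries arise, $|z_2|=|z_1|^{3/2}$ and $|z_1|=|z_2|^2$; on each, both formulas coincide.

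To conclude, I would fix $R>1$ and $z\in\D^2\cap R^{-1}\D^2$ and consider the segment $\{tz : t\in[1,R]\}$. Since the conditions for membership in $D_j$ with $j\ne 0$ depend only on the scale-invariant ratio $z_2/z_1$ (unchanged by $t\mapsto tz$), while each of the three inequalities defining $D_0$ has the property that, once satisfied at some $t_0$, it remains satisfied for all $t\ge t_0$ (compare growth rates in $t$), the point $tz$ undergoes at most one piece-transition on the segment. Either $z$ and $Rz$ lie in the same piece (in which case (\ref{gooddecr}) is immediate) or there is a unique $t^*\in(1,R)$ at which $t^*z$ lies on the common boundary of two adjacent pieces; in the latter case writing $H(Rz)-H(z)$ as the sum of the within-piece increments on $[1,t^*]$ and $[t^*,R]$ plus the $O(1)$ boundary-matching error gives $|H(Rz)-H(z)|\le 2\log R + O(1)$ uniformly in $z$. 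The same argument handles $F$. The delicate step is the $D_0$--$D_3$ matching, where $\log|z_1+z_2|$ might a priori be very negative; it is precisely the constraint $z_2/z_1\in\bar D(-1,\tfrac12)$ on $D_3$, which bounds $|z_2|/|z_1|$ away from $0$ and $\infty$, that makes this comparison work.
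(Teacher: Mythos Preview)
Your approach is correct and matches the paper's brief sketch: case analysis on whether $z$ and $Rz$ lie in the same piece, with the cross-piece case handled by a direct computation giving constants that are fixed multiples of $\log R$. One minor point: your enumeration of the $D_0$--$D_j$ transition boundaries is not exhaustive (for instance, the $D_1\to D_0$ transition can occur via $|z_1+z_2|=|z_1|^2$ rather than $|z_2|=|z_1|^2$ when $\mathrm{Re}(z_2/z_1)<-\tfrac12$), but in every such omitted case the transition point $w$ satisfies $|w_1|\ge\tfrac12$ or $|w_2|\ge\tfrac12$, so all piecewise formulas are $O(1)$ there and your argument goes through unchanged.
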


\begin{proof}
When there is an $i \in \{0,1,2,3\}$ such that both $z$ and $Rz \in D_i$, 
the property follows immediately from the formula for $H$. Otherwise, in
cases where (for instance) $z \in D_1$ and $Rz \in D_0$, it is a straighforward
computation. The constants $C(R)$ are fixed multiples of $\log R$.  The computation is similar (if anything, easier) for $F$.
\end{proof}

In the generic case where
 all the $v_i$ in the statement of Theorem \ref{gen3pt} are distinct, we choose an invertible linear map $\Phi$
such that 
$[\Phi (\tilde v_3)]=[1:0]$, $[\Phi (\tilde v_2)]=[0:1]$, $[\Phi (\tilde v_1)]=[1:-1]$ 
(this follows from an elementary computation with matrices, or from the well-known fact that M\"obius maps on the Riemann sphere are 
transitive on triples of points). By Lemma \ref{lincv}, we can 
reduce ourselves to studying $\Phi(S_\eps)$, which means that we take
\begin{equation}
\label{sqpts}
a_1^\eps = (0,0), \quad
a_2^\eps = (\rho_2(\eps),\eta_2(\eps)), \quad
a_3^\eps = (\eta_3(\eps),\rho_3(\eps)),
\end{equation}
with all coordinates tending to $0$ and, given the new
values of $v_3$ and $v_2$,
$\lim_{\eps\to0} \eta_j(\eps)/\rho_j(\eps)=0$, $j=2,3$. 

If we set 
$\gamma(\eps):= \frac{\eta_3(\eps)-\rho_2(\eps)}{\rho_3(\eps)-\eta_2(\eps)}$, then $v_1^\eps= [\gamma(\eps):1]$, so $\lim_{\eps\to0} \gamma(\eps)=-1$.
Hence 
$$
\lim_{\eps\to0} \frac{\rho_2(\eps)}{\rho_3(\eps)}
= \lim_{\eps\to0} \frac{\frac{\eta_3(\eps)}{\rho_3(\eps)}-\gamma(\eps)}{1- \gamma(\eps) \frac{\eta_2(\eps)}{\rho_2(\eps)}} = 1.
$$
Now denote by $\eps$ what was denoted $\rho_2(\eps)$ (if the application
was not one-to-one, there is some ambiguity in our new notations, but all possibles choices will give subfamilies converging to the same limit). Then $\eta_2(\eps)$ becomes $\delta_1(\eps)$, 
$\eta_3(\eps)$ becomes $\delta_2(\eps)$, 
and $\rho_3(\eps)$
becomes $\eps (1+o(1))= \eps + \delta_3(\eps)$. We are reduced to case (1) of Proposition \ref{model}.

On the other hand, in the degenerate case where
 $v_1=v_3\neq v_2$, we can still take $\Phi$
such that $[\Phi (\tilde v_3)]=[1:0]=[\Phi (\tilde v_1)]$, $[\Phi (\tilde v_2)]=[0:1]$.
Then \eqref{sqpts} and the 
limits following it still hold.  But, setting $\gamma(\eps)$ as before,
we have $\lim_{\eps\to0} \gamma(\eps)=\infty$, so
$$
\lim_{\eps\to0} \frac{\rho_2(\eps)}{\rho_3(\eps)}
= \lim_{\eps\to0} \frac{\gamma(\eps)^{-1}\frac{\eta_3(\eps)}{\rho_3(\eps)}-1}{\gamma(\eps)^{-1}-  \frac{\eta_2(\eps)}{\rho_2(\eps)}} = \infty.
$$
Again, denote by $\eps$ what was denoted $\rho_2(\eps)$, then 
$\rho_3(\eps)=o(\rho_2(\eps))$ becomes  $\delta_4(\eps)$
and $\eta_3(\eps)=o(\rho_3(\eps))$ becomes  $\delta_4(\eps)\omega(\eps)$; we are reduced to case (2) of Proposition \ref{model}.

The properties about the decrease of $g$ in Theorem \ref{gen3pt} 
are invariant under linear changes of coordinates, and are easily verified on the functions $F$ and $H$.
\end{proof*}

\subsection{Simplified model cases}

\begin{proof*}{\it Proof of Proposition \ref{model}.}

We apply Corollary \ref{corpert} with the map 
$$
\Phi_\eps(z)= (z_1,z_2) + \left( \frac{\delta_2(\eps)}\eps z_2, \frac{\delta_1(\eps)}\eps z_1 + \frac{\delta_3(\eps)}\eps z_2 \right),
$$
which transforms the system $ \left\{ (0,0), (\eps,0), (0,\eps)\right\}$
into the one given in case (1) of  Proposition \ref{model}. 
The proof of that case reduces to:

\begin{lemma}[Generic case]
\label{3sym}
Suppose that $S_\eps = \left\{ (0,0), (\eps,0), (0,\eps)\right\}$. Then
$\lim_{\eps\to0} G_{\eps}=g_H$, uniformly on compacta of $\D^2  \setminus \{0\}$,
and $g_H \sim_0 H$. 
\end{lemma}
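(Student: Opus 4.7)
The plan is to invoke Lemma \ref{boundcv} with $G = H$, reducing the task to producing a uniform two-sided estimate $|G_{S_\eps}(z) - H(z)| \le C$ on each sphere $\{\|z\| = \delta\}$ with $\delta \in (0, \delta_0]$ and $|\eps|$ sufficiently small in terms of $\delta$. A preliminary (routine) region-by-region check establishes that $H$ is admissible in the sense of Definition \ref{defadm}, which is also needed when one later composes with a linear change of coordinates via Lemma \ref{lincv}.

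For the lower bound, I would exhibit explicit plurisubharmonic candidates matching $H$ modulo $O(1)$ on each region $D_j$. On $D_0$ the inclusion $\mathcal I(\{0,\eps\}^2) \subset \mathcal I(S_\eps)$ together with Lemma \ref{lemprod} yields $G_{S_\eps}(z) \ge \max(g_\eps(z_1), g_\eps(z_2)) = 2\max(\log|z_1|,\log|z_2|) + O(1)$, which is exactly $H$ on $D_0$ provided $|\eps| \ll \delta$. On $D_1$ I would use
\[
u_1(z) := \tfrac{3}{8} \log|z_1(z_1-\eps)| + \tfrac{1}{8} \log|z_2(z_2-\eps)| + \tfrac{1}{4} \log|z_1 z_2|,
\]
and its $z_1 \leftrightarrow z_2$ counterpart on $D_2$; on $D_3$ the candidate is
\[
u_3(z) := \tfrac{1}{2} \log|z_1 z_2 (z_1+z_2-\eps)|.
\]
In each case one checks that the Lelong number at every pole of $S_\eps$ is at most $1$ (a short explicit computation), so that a suitable additive constant makes the function a legitimate sub-solution of the variational problem defining $G_{\mathcal I_\eps}$. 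The asymptotics $u_j \sim H$ on $D_j$ as $|\eps| \to 0$ are immediate when one uses the defining constraints of $D_j$ to compare $|\eps|$ with $\|z\|$ (respectively $|z_1+z_2|$ for $D_3$), which prevents the perturbation terms $z_i - \eps$ and $z_1+z_2-\eps$ from being drowned out by $\eps$.

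For the matching upper bound $G_{S_\eps} \le H + O(1)$, I would use the maximality of $G_{S_\eps}$ on $\D^2 \setminus S_\eps$ together with an analytic-disk comparison via the Lempert functional $\ell_{S_\eps} \ge G_{S_\eps}$. Concretely, given a target $z$ with $\|z\| = \delta$ and $\eps \ll \delta$, one constructs a polynomial disk $f(\zeta) = (p(\zeta),q(\zeta))$ with $f(0)=z$ and $f(\zeta_i) = a_i^\eps$ for chosen parameters $\zeta_1,\zeta_2,\zeta_3 \in \D \setminus \{0\}$: writing $p(\zeta) = A(\zeta-\zeta_1)(\zeta-\zeta_3)$ and $q(\zeta)=B(\zeta-\zeta_1)(\zeta-\zeta_2)$ and solving $p(0)=z_1$, $q(0)=z_2$ expresses the $\zeta_i$ as functions of $z/\eps$. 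Optimizing the choice of $\zeta_i$ in each region $D_j$ gives $\ell_{S_\eps}(z) = \sum_i \log|\zeta_i| \le H(z)+O(1)$, after an elementary computation. This may be corroborated by the complementary estimates $G_{S_\eps} \le G_{\{a_i^\eps,a_j^\eps\}}$ on pairs, handled through the complete-intersection formula.

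The principal obstacle is the upper bound: one must identify, for each region $D_j$, the correct family of disks (equivalently, the correct maximal barrier) yielding the sharp asymptotics, and then assemble the estimates uniformly across the boundaries between the $D_j$. The piecewise nature of $H$ means one must also verify that the candidates $u_j$ and the upper-bound disks interface consistently on the transitional sets $\{|z_2| \sim |z_1|^2\}$, $\{z_2/z_1 \sim -1\}$ and so on. Symmetry under $z_1 \leftrightarrow z_2$ collapses $D_1$ and $D_2$ to a single case, and the three poles of $S_\eps$ having a common vanishing pattern for the specific generators $z_1(z_1-\eps)$, $z_2(z_2-\eps)$, $z_1 z_2$ is what makes the piecewise formula for $H$ the natural limit.
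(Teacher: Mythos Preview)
Your overall plan (apply Lemma~\ref{boundcv} with $G=H$, then produce two-sided bounds $|G_{S_\eps}-H|\le C$ for $\|z\|\ge\delta$, $|\eps|$ small) is exactly the paper's strategy, and the lower bound is close to correct. Two remarks there: first, the condition for $u$ to be a competitor in the definition of $G_{S_\eps}$ is that the Lelong number at each pole be \emph{at least} $1$, not at most; second, your $u_1$ actually has Lelong number $\tfrac38+\tfrac18+\tfrac14=\tfrac34$ at $(\eps,0)$ and $(0,\eps)$, so it is not a legitimate candidate. This is harmless, because your $u_3=\tfrac12\log|z_1z_2(z_1+z_2-\eps)|$ already matches $H$ up to $O(1)$ on all of $D_1\cup D_2\cup D_3$ (use $|z_1+z_2|\asymp\max(|z_1|,|z_2|)$ off $D_3$, and $|z_2|\asymp|z_1|$ on $D_3$). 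The paper's lower bound is precisely the max of this function and the product-set Green function you use on $D_0$.

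The genuine gap is the upper bound. The quadratic discs you write down are the same discs the paper constructs in Lemma~\ref{above3sym}, but they do \emph{not} yield the sharp bound $H(z)+O(1)$; they only give the rough estimate $\tfrac32\log\max(|z_1|,|z_2|)+C$ (Lemma~\ref{roughestabove}). For instance on $D_2$, where $|z_2|^2<|z_1|<|z_2|$ and $H(z)=\tfrac12\log|z_1|+\log|z_2|<\tfrac32\log|z_2|$, the preimages satisfy $|\zeta_z|\approx|z_2|^{1/2}$ while the other two tend to $0$, so the three-point disc Green estimate gives $\approx 3\log|\zeta_z|=\tfrac32\log|z_2|$, which is strictly above $H$. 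Your ``optimizing the $\zeta_i$'' does not repair this: the constraints $|A|,|B|\lesssim1$ force $|\zeta_1\zeta_3|\gtrsim|z_1|$, $|\zeta_1\zeta_2|\gtrsim|z_2|$, and the $\eps$-equations couple the $\zeta_i$ in a way that prevents reaching $|\zeta_1\zeta_2\zeta_3|\approx|z_1|^{1/2}|z_2|$ for small $\eps$. The two-point bounds $G_{S_\eps}\le G_{\{a_i,a_j\}}$ are likewise too weak on $D_1,D_2,D_3$.

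The missing idea, which is the heart of the paper's argument, is a three-circle interpolation. One restricts $G_{S_\eps}$ to the complex line $\varphi(\zeta)=(\zeta,(z_2/z_1)\zeta)$ through $0$ and $z$; the restriction $u=G_{S_\eps}\circ\varphi$ is subharmonic on $\D$. On a tiny circle $|\zeta|=\delta_1$ the rough disc estimate gives $u\le\tfrac32\log\delta_1+C$; on the circle $|\zeta|=|z_2/z_1|$ (where $\varphi(\zeta)$ lands in $D_0$) the sharp $D_0$ bound gives $u\le 2\log|z_2/z_1|+\eta$. Hadamard's three-circle theorem at $\zeta=z_1$ then interpolates these two bounds and, after letting $\delta_1\to0$, produces exactly $\log|z_1|+\tfrac12\log|z_2|=H(z)$ on $D_1$ (and the other regions by symmetry). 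This interpolation step is not an ``elementary computation'' hidden in a Lempert estimate; it is a separate mechanism that your proposal does not contain.
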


For case (2), we use the map 
$$
\Phi_\eps(z)= (z_1,z_2) + \left( \omega(\eps) z_2, \frac{\delta_1(\eps)}\eps z_1  \right),
$$
which maps the system $S_\eps = \left\{ (0,0), (\eps,0), (0,\delta_4(\eps))\right\}$ to the one given in case (2).  We exchange the axes,
setting $\check{F}(z_1,z_2)= F(z_2,z_1)$. For simplicity of notation,
we also write
$\rho:=\delta_4$. The proof reduces to:

\begin{lemma}[Degenerate case]
\label{3sing} 
Let $S_\eps = \left\{ (0,0), (\rho(\eps), 0), (0, \eps)\right\}$,
where  $\lim_{\eps\to0} \rho(\eps)/\eps=0$. Then 
$\lim_{\eps\to0} G_{\eps}=g_F$, uniformly on compacta of $\D^2  \setminus \{0\}$,
and $g_F\sim_0 \check{F}$. 
\end{lemma}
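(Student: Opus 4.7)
The plan is to apply Lemma \ref{boundcv} with $G=\check F$: it suffices to find constants $C,\delta_0>0$ such that $|G_\eps(z)-\check F(z)|\le C$ on every sphere $\|z\|=\delta\in(0,\delta_0]$ for all $\eps$ sufficiently small (depending on $\delta$). I split $\check F$ into its two pieces from \eqref{Fformula}: on $D_0'$ it equals $\max(2\log|z_1|,2\log|z_2|)$, and on $D_1'$ it equals $\log|z_1|+\tfrac12\log|z_2|$, and prove matching two-sided bounds in each region.

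For the lower bound, I construct explicit plurisubharmonic candidates in the sup of Definition \ref{greenideal}. The functions $\psi_1(z)=z_1(z_1-\rho)$ and $\psi_2(z)=z_2(z_2-\eps)$ lie in $\mathcal I_\eps$ and vanish simply at each pole, so $u_\eps := \max(\log|\psi_1|,\log|\psi_2|)-\log 4$ is a valid candidate, yielding $G_\eps \ge u_\eps \sim \max(2\log|z_1|,2\log|z_2|) + O(1)$ on the region $\{|z_1|\gg\rho,\ |z_2|\gg\eps\}$. This matches $\check F$ on $D_0'$. In $D_1'$, the candidate $u_\eps$ is too weak by a $\tfrac12\log|z_2|$ factor; I would sharpen it by maxing against a second candidate extracted from an analytic disk $\phi_\eps:\D\to\D^2$ that passes through all three poles with asymmetric $\zeta$-scaling (scale $\rho^{1/2}$ near the coalescing pair on the $z_1$-axis, absorbing them into a single higher-order singularity, and scale $\eps^{1/2}$ near $(0,\eps)$). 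The resulting three-pole subharmonic bound for $G_\eps\circ\phi_\eps$ on $\D$, pulled back to $\D^2$, gives the required $\log|z_1|+\tfrac12\log|z_2|$ estimate.

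For the upper bound I first use the inclusions $\mathcal I_\eps\subset\mathcal I(\{a_i^\eps,a_j^\eps\})$ for each pair of poles, giving $G_\eps\le G_{\{a_i^\eps,a_j^\eps\}}^{\D^2}$, and pass to the $\eps\to 0$ limit via the two-pole analysis of Section \ref{ex}: the pair on the $z_1$-axis yields $\max(2\log|z_1|,\log|z_2|)+O(1)$, while the two pairs containing $(0,\eps)$ both yield $\max(\log|z_1|,2\log|z_2|)+O(1)$, since their joining directions both tend to $[0:1]$. The minimum of these three envelopes agrees with $\check F+O(1)$ on $D_0'$ but is too weak in $D_1'$. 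To close the gap in $D_1'$ I plan to compare $G_\eps$ with the Green function of a larger ideal $\mathcal K_\eps\supset\mathcal I_\eps$ obtained by adjoining a cleverly chosen holomorphic function so that $(\mathcal K_\eps)_\eps$ satisfies the Uniform Complete Intersection Condition; Theorem \ref{thmgci} then gives a limit whose Green function agrees with $\check F$ on $D_1'$ by direct computation. Alternatively, I propagate the $D_0'$ agreement inward using maximality of $G_\eps$ on $\D^2\setminus S_\eps$ together with the comparison principle applied on the annular region between $\partial D_1'$ (where $\check F$ already controls $G_\eps$) and a shrinking sphere about $0$.

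The main obstacle is producing the fractional coefficient $\tfrac12$ on $\log|z_2|$ inside $D_1'$ on both sides of the estimate: this coefficient encodes the scaling ratio $\rho/\eps\to 0$ and does not arise from any naive ideal inclusion or from single-pole/two-pole comparisons. The heart of the proof is therefore the matched construction of an auxiliary disk $\phi_\eps$ on the lower-bound side and an enlarged complete-intersection family $\mathcal K_\eps$ on the upper-bound side, both calibrated to the rate $\rho/\eps$.
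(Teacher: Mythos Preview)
Your proposal has the right overall architecture (apply Lemma~\ref{boundcv} with $G=\check F$, split into $D_0'$ and $D_1'$), but the two key steps in $D_1'$ are both miscast.

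\textbf{Lower bound in $D_1'$.} Analytic disks produce \emph{upper} bounds, never lower ones: if $\phi:\D\to\D^2$ hits the poles, then $G_\eps\circ\phi\in SH_-(\D)$ yields $G_\eps(\phi(\zeta))\le$ (disk Green function), which is an inequality in the wrong direction for your purpose. The paper instead notices that the affine function $\psi_\eps(z)=z_1+\tfrac{\rho(\eps)}{\eps}z_2-\rho(\eps)$ vanishes at $(\rho,0)$ and $(0,\eps)$ but not at $(0,0)$; hence each of $z_1,z_2,\psi_\eps$ vanishes at exactly two of the three poles, so $\tfrac12\log|z_1z_2\psi_\eps|-\tfrac12\log3$ is a legitimate competitor. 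Since $\psi_\eps\to z_1$ when $\rho/\eps\to0$, this candidate converges to $\tfrac12\log|z_1^2z_2|=\log|z_1|+\tfrac12\log|z_2|$, exactly the missing piece (Lemma~\ref{lower}).

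\textbf{Upper bound in $D_1'$.} A complete intersection ideal $\mathcal K_\eps\supset\mathcal I_\eps$ has $G_{\mathcal K_\eps}\sim_0\log\|\Psi_0\|$ for some holomorphic map $\Psi_0$, which is a $\max$ of log-moduli and cannot reproduce the \emph{sum} $\log|z_1|+\tfrac12\log|z_2|$ with a half-integer weight; this route is a dead end. Your alternative (propagate by maximality from $\partial D_1'$) is also not viable as stated: $\check F$ is not psh, and $\partial D_1'$ does not bound a region on which you already control $G_\eps$. The paper's mechanism is different: first obtain a rough bound $G_\eps\le\tfrac32\log\max(|z_1|,|z_2|)+C$ everywhere via analytic disks (Corollary~\ref{roughestsing}), then a sharp bound $G_\eps\le2\log\max(|z_1|,|z_2|)+\eta$ on the enlarged exceptional set $\check D_0'$ via a disk modeled on the Neil parabola $\zeta\mapsto(\zeta^3,\zeta^2)$ that hits the pole $(0,\eps)$ \emph{twice} (Lemma~\ref{aboveD'0}); finally, restrict $G_\eps$ to the linear disk $\zeta\mapsto(\tfrac{z_1}{z_2}\zeta,\zeta)$ and apply the Hadamard three-circle theorem between radii $\delta_1$ (rough bound) and $|z_1/z_2|^2$ (where the disk enters $\check D_0'$). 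The convex interpolation of $\tfrac32\log\delta_1$ and $4\log|z_1/z_2|$ at $\zeta=z_2$ is what manufactures the coefficient $\tfrac12$.
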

\end{proof*}

\subsection{Lower estimate}

Now we start the proof of Lemmas \ref{3sym} and \ref{3sing}. 

Let $\psi_\eps (z) := z_1 + \frac{\rho(\eps)}{\eps} z_2 - \rho(\eps)$ (where 
$\rho(\eps)=\eps $ or $\rho(\eps)=o(\eps) $).  Let
$$
L_\eps (z) = \max \left(  \frac12 \log \left| z_1 z_2 \psi_\eps (z) \right| , 
\log  \left| z_1 \frac{\rho(\eps) -z_1}{1 - \overline{\rho(\eps)} z_1} \right|,
\log  \left| z_2 \frac{\eps -z_2}{1 - \overline{\eps} z_2} \right| 
\right) - \frac12 \log 3 .
$$
\begin{lemma}
\label{lower}
Suppose that $|\rho(\eps)| \le |\eps|$ for any $\eps$, then
$$
G_\eps (z) \ge L_\eps (z), \mbox{ for any } z \in \D^2.
$$
Furthermore, $\lim_{\eps\to 0} L_\eps$ exists uniformly on any
compact subset of $\D^2 \setminus \{0\}$, and
\begin{enumerate}
\item
if $\rho(\eps) = \eps$, then $\lim_{\eps\to 0} L_\eps \sim_0 H$;
\item
if $\rho(\eps) = o(\eps)$, then $\lim_{\eps\to 0} L_\eps \sim_0 \check{F}$.
\end{enumerate}
\end{lemma}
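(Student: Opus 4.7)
The plan is to verify that $L_\eps\in PSH_-(\D^2)$ with the appropriate logarithmic singularity at each of the poles $a_1^\eps=(0,0)$, $a_2^\eps=(\rho(\eps),0)$, $a_3^\eps=(0,\eps)$, so that $G_\eps\ge L_\eps$ is forced by Definition \ref{greenideal} for $\mathcal I_\eps=\mathcal I(S_\eps)$. Plurisubharmonicity is immediate since each of the three arguments of the max is the logarithm of the modulus of a holomorphic function. To see $L_\eps\le 0$ on $\D^2$, I would check on $\partial\D^2$ and invoke the maximum principle: on $|z_1|=1$ the Blaschke factor in the second argument has modulus one, the one in the third argument is bounded by one, and $|\psi_\eps(z)|\le|z_1|+|\rho/\eps||z_2|+|\rho|\le 3$ (using $|\rho|\le|\eps|\le 1$), so all three arguments are $\le\tfrac12\log 3$ on $\partial\D^2$; the subtraction of $\tfrac12\log 3$ restores negativity.

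The pole bounds are a direct computation at each $a_j^\eps$: the key input is the linear estimate $|\psi_\eps(z)-\psi_\eps(a_j^\eps)|\le(1+|\rho/\eps|)\|z-a_j^\eps\|\le 2\|z-a_j^\eps\|$ together with the vanishing of $\psi_\eps$ at $a_2^\eps$ and $a_3^\eps$, while at $a_1^\eps$ one uses $|z_1z_2|\le\|z\|^2$ and the boundedness of $|\psi_\eps(0,0)|=|\rho|$. Combined with the similar vanishing of $z_1$, $z_2$, $\rho-z_1$, and $\eps-z_2$ at the appropriate poles, each argument of the max is $\le\log\|z-a_j^\eps\|+O(1)$ near that pole. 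This confirms $L_\eps$ is a competitor, hence $L_\eps\le G_\eps$.

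For the convergence, as $\eps\to 0$ the Blaschke factors tend uniformly on $\overline{\D^2}$ to $-z_1$ and $-z_2$, while $\psi_\eps$ tends uniformly to $z_1+z_2$ in case (1) and to $z_1$ in case (2). Hence $L_\eps$ converges uniformly on compacta of $\D^2\setminus\{0\}$ (with values in $[-\infty,0]$) to
\[
L_0^{(1)}=\max\bigl(\tfrac12\log|z_1 z_2(z_1+z_2)|,\,2\log|z_1|,\,2\log|z_2|\bigr)-\tfrac12\log 3
\]
in case (1), and to
\[
L_0^{(2)}=\max\bigl(\log|z_1|+\tfrac12\log|z_2|,\,2\log|z_1|,\,2\log|z_2|\bigr)-\tfrac12\log 3
\]
in case (2).

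The main obstacle, and the most tedious step, is to check the equivalences $L_0^{(1)}\sim_0 H$ and $L_0^{(2)}\sim_0\check F$ by case analysis on the partitions used to define $H$ and $\check F$. In each subregion one of the three arguments of the max realizes the max up to $O(1)$ and matches the piecewise formula; for instance, on $D_1$ the estimates $|z_2|\le|z_1|$ and $|z_1+z_2|\ge|z_1|/2$ yield $|z_1+z_2|=|z_1|(1+O(1))$, so the first argument equals $\log|z_1|+\tfrac12\log|z_2|+O(1)=H+O(1)$, while the defining inequalities $|z_2|>|z_1|^2$ and $|z_1|>|z_2|^2$ of $D_1\subset\D^2\setminus D_0$ force the other two arguments to be dominated. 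On $D_3$ one uses $|z_2|=|z_1|(1+O(1))$; on each subregion of $D_0$ one of $2\log|z_j|$ dominates by the condition $|z_k|\le|z_j|^2$, or the condition $|z_1+z_2|\le|z_1|^2$ makes all three arguments comparable. The two regions $\check D'_0, \check D'_1$ for $\check F$ in case (2) are handled analogously; throughout, the work is straightforward bookkeeping with the defining inequalities.
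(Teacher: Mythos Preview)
Your proposal is correct and follows essentially the same route as the paper: show that $L_\eps$ is an admissible competitor in Definition~\ref{greenideal}, compute the pointwise limit, and then match it to $H$ (resp.\ $\check F$) region by region. Two small differences are worth noting. First, for the two Blaschke-type terms the paper does not re-verify the pole bounds by hand; instead it enlarges $S_\eps$ to the product set $S'_\eps=\{0,\rho(\eps)\}\times\{0,\eps\}$ and invokes Lemma~\ref{lemprod} to identify $G_{S'_\eps}$ explicitly, which gives $G_\eps\ge G_{S'_\eps}$ immediately. Your direct verification is equally valid (and avoids the extra lemma), but the product-set argument is a bit cleaner and also makes the negativity of those two terms automatic, so that the $-\tfrac12\log 3$ is only needed for the first term---no boundary maximum principle is required. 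Second, the paper explicitly checks only the inequality $\lim_{\eps\to 0}L_\eps\ge H-C$ (resp.\ $\ge \check F-C$), remarking that this direction alone is what is used later in the proofs of Lemmas~\ref{3sym} and~\ref{3sing}; your sketch of the full equivalence is fine, but more than is strictly needed.
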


\begin{proof}
Let $S'_\eps := S_\eps \cup \{ (\rho(\eps), \eps)\}$. This is a product set
and by Lemma \ref{lemprod}, we get 
$$
G_\eps (z) \ge G_{S'_\eps} (z) = 
\max \left(  
\log  \left| z_1 \frac{\rho(\eps) -z_1}{1 - \overline{\rho(\eps)} z_1} \right|,
\log  \left| z_2 \frac{\eps -z_2}{1 - \overline{\eps} z_2} \right| 
\right).
$$
At each of the three points of $S_\eps$, exactly two of the three holomorphic
functions $z_1$, $ z_2$ and $ \psi_\eps (z)$ vanish, and $|z_1 z_2 \psi_\eps (z)|\le 3$
for all $z \in \D^2$, so 
$$
G_\eps (z) \ge 
\frac12 \log \left| z_1 z_2 \psi_\eps (z) \right| - \frac12 \log 3,
$$
because the right hand side is a negative plurisubharmonic function with
the correct singularities. 

It is easy to see that 
$$
\lim_{\eps\to 0} L_\eps (z) = \max \left(  \frac12 \log \left| z_1 z_2 (z_1+z_2) \right| , 
2 \log  | z_1|, 2 \log  | z_2|
\right) - \frac12 \log 3
$$ 
if $\rho(\eps) = \eps$, and
$$
\lim_{\eps\to 0} L_\eps (z) = \max \left(  \frac12 \log | z_1^2 z_2 | , 
2 \log  | z_1|, 2 \log  | z_2|
\right) - \frac12 \log 3 
$$
if $\rho(\eps) = o(\eps)$, with uniform convergence on any
compact subset of $\D^2 \setminus \{0\}$. 

We give only the estimate from below of  $\lim_{\eps\to 0} L_\eps $ 
in the inequalities implicit in (1) and (2) (those are the only ones needed to
prove Lemmas \ref{3sym} and \ref{3sing}). 

First we deal with statement (1) in the Lemma.

For $z \in D_0$, the inequality is immediate. For $z \in D_1$, 
$\left| \frac{z_2}{z_1} +1\right| \ge \frac12 $, so $\left| z_1 z_2 (z_1 + z_2) \right| \ge | z_1 z_2| \frac12  | z_1| $,
and we get 
$$ 
\lim_{\eps\to 0} L_\eps (z) \ge \frac12 \log \left| z_1 z_2 (z_1+z_2) \right| - \frac12 \log 3 \ge H(z) - \frac12 \log 6. 
$$

By considering the image of the disk $\mathcal D_3$ under the inversion $\zeta \mapsto 1/\zeta$,
we see that $\left| \frac{z_2}{z_1} +1\right| \ge \frac12 $ implies $\left| \frac{z_1}{z_2} +1\right| \ge \frac13 $,
so for $z \in D_2$, $ \frac12 \log \left| z_1 z_2 (z_1+z_2) \right| \ge \frac12 \log | z_1 z_2^2| - \frac12\log 3= H(z) - \frac12 \log 3$.

Finally, for $z \in D_3$, 
$\left| \frac{z_2}{z_1} +1\right| \le \frac12 $, so $ |z_2| \ge \frac12  |z_1|$ and 
$ \frac12 \log \left| z_1 z_2 (z_1+z_2) \right| \ge \frac12 \log | z_1^2 (z_1+z_2)| - \frac12\log 2= H(z) - \frac12 \log 2$.

In case (2), the computations are even easier. 
\end{proof}

\subsection{Upper estimates for the generic case}

We will now estimate $G_\eps$ from above by constructing certain analytic discs.  This should be compared
with \cite{Th}.

In what follows, $S_\eps$ and $G_{\eps}$ are as in Lemma \ref{3sym}.

\subsubsection{Near the coordinate axes}

\begin{lemma}
\label{aboveD0}
Let $\delta >0$. For any $\eta>0$
there exists $m=m(\delta, \eta)>0$ such that for any $\eps \in D(0,m)$,
for any $z_1,z_2 \in D_0 \setminus D(0,\delta)^2$,
$$
G_{\eps} (z_1,z_2) \le 2 \log \left( \max (|z_1|, |z_2|) \right) + \eta.
$$
\end{lemma}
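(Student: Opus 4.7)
The plan is to bound $G_\eps$ from above by an analytic disk method. We construct, for each $z\in D_0\setminus D(0,\delta)^2$ and each small $\eps$, an analytic disk $\phi_{\eps,z}\colon\mathbb D\to\overline{\mathbb D^2}$ with $\phi_{\eps,z}(0)=z$ passing through all three poles $a_j^\eps$ at parameters $\alpha_1,\alpha_2,\alpha_3\in\mathbb D$, and then invoke the Lempert-type inequality $G_\eps(z)\le\sum_{j=1}^3\log|\alpha_j|$.

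First I would reduce to a single canonical configuration: by the $z_1\leftrightarrow z_2$ symmetry of $S_\eps$ (which swaps $a_2^\eps$ and $a_3^\eps$), assume $|z_1|\ge|z_2|$, so the bound to prove becomes $G_\eps(z)\le 2\log|z_1|+\eta$. Under this assumption, the defining inequalities of $D_0$ collapse to either (a)~$|z_2|\le|z_1|^2$ or (b)~$|z_1+z_2|\le|z_1|^2$; case~(b) reduces to case~(a) by a linear change of variables rotating the anti-diagonal onto an axis (up to a bounded distortion absorbed into $\eta$), so the problem becomes that of case~(a) only.

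For case~(a), the disk is built as $\phi_{\eps,z}(\lambda)=(B_a(\lambda)B_b(\lambda),\,\Psi_\eps(\lambda))$, where $B_a, B_b$ are Blaschke factors with $b=1-\eta'$ close to $\partial\mathbb D$ and $a=z_1/b\approx z_1$, so that $B_aB_b(0)=ab=z_1$ and $B_aB_b$ vanishes at $\lambda=a$ and $\lambda=b$. The preimages of the first coordinate then satisfy $\alpha_1=a$ (preimage of $a_1^\eps$), $\alpha_2\approx z_1-\eps$ (small-branch preimage of $a_2^\eps$), and $\alpha_3=b$ (preimage of $a_3^\eps$). The second component $\Psi_\eps$ is a polynomial of degree~$3$ in $\lambda$ interpolating $\Psi_\eps(0)=z_2$, $\Psi_\eps(\alpha_1)=0$, $\Psi_\eps(\alpha_2)=0$, $\Psi_\eps(\alpha_3)=\eps$. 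A direct computation of the Lempert sum gives
\[
\sum_{j=1}^3\log|\alpha_j|=\log|a|+\log|z_1-\eps|+\log(1-\eta') \xrightarrow[\eps,\,\eta'\to 0]{} 2\log|z_1|,
\]
so by first choosing $\eta'$ small and then $\eps$ small, the Lempert bound gives the required estimate.

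The main obstacle is to ensure $\phi_{\eps,z}$ maps $\mathbb D$ into $\overline{\mathbb D^2}$ uniformly in $z\in D_0\setminus D(0,\delta)^2$. The first coordinate is automatic since $B_aB_b$ is a Blaschke product. The second coordinate is the delicate part: writing $\Psi_\eps(\lambda)=(\lambda-\alpha_1)(\lambda-\alpha_2)L_\eps(\lambda)$ with $L_\eps$ affine, the leading coefficient of $L_\eps$ is of order $|z_2|/(|\alpha_1\alpha_2|)=O(|z_2|/|z_1|^2)$, so the $D_0$ hypothesis $|z_2|\le|z_1|^2$ is precisely what keeps $\|L_\eps\|_{\overline{\mathbb D}}$ bounded by an absolute constant; the resulting $\|\Psi_\eps\|_{\overline{\mathbb D}}$ bound is then a fixed constant $C$. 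The extra factor $C$ is handled by adjusting the free parameter $\eta'$ (taking $\eta'$ depending on $C$ and on the target tolerance~$\eta$) so the $o(1)$ tail in the sum of $\log|\alpha_j|$ absorbs it; the boundary zone $|z_2|\approx|z_1|^2$ of $D_0$ requires a slightly refined interpolation but no new idea.
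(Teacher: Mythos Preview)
Your construction works much harder than necessary, and this creates a genuine gap. The target bound is $2\log|z_1|$, i.e.\ \emph{two} logarithmic factors; there is no need for the analytic disc to meet all three poles. The paper simply takes, in the case $|z_2|\le |z_1|^2$,
\[
\tilde\varphi_\eps(\zeta)=\Bigl(\zeta,\ \frac{\zeta(\zeta-\eps)}{z_1(z_1-\eps)}\,z_2\Bigr),
\]
which hits only $(0,0)$ and $(\eps,0)$; the second component is a quadratic whose sup on $\D$ is $\le |z_2|/|z_1(z_1-\eps)|\le 1+O(|\eps|/\delta)$ by the $D_0$ hypothesis, so a rescaling $\zeta\mapsto\zeta/(1+\gamma)$ with $\gamma=O(|\eps|)$ already lands the disc in $\D^2$ and yields the bound $2\log|z_1|+O(|\eps|)$. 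Your third preimage $\alpha_3=b=1-\eta'$ contributes only $\log(1-\eta')\approx 0$ to the Lempert sum, so it buys nothing.

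Forcing that third interpolation point is exactly what breaks your argument. With a cubic $\Psi_\eps$ you only obtain $\|\Psi_\eps\|_{\overline\D}\le C$ for some absolute $C>1$, and your claim that ``the extra factor $C$ is handled by adjusting $\eta'$'' is not correct: shrinking the parameter disc by a factor $r$ forces $r>|\alpha_3|=1-\eta'$, so $r$ stays close to $1$ and $\|\Psi_\eps\|_{r\D}$ stays close to $C$; dividing $\Psi_\eps$ by $C$ destroys the condition $\Psi_\eps(\alpha_3)=\eps$. There is also a uniformity problem you do not address: when $z_1$ is close to $b$ or $b^2$ the denominators $(b-\alpha_1)(b-\alpha_2)$ degenerate and $L_\eps$ is not uniformly bounded.

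Finally, your reduction of the case $|z_1+z_2|\le|z_1|^2$ to the axial case by ``a linear change of variables rotating the anti-diagonal onto an axis'' does not work as stated: no linear map fixes $S_\eps=\{(0,0),(\eps,0),(0,\eps)\}$ while sending the line $z_1+z_2=0$ to a coordinate axis, and an affine map that permutes $S_\eps$ (such as $(z_1,z_2)\mapsto(z_1,\eps-z_1-z_2)$) changes the domain from $\D^2$ to something that neither contains nor is contained in $\D^2$; the discrepancy is a fixed constant, not an arbitrarily small $\eta$. The paper instead treats this case directly with the disc $\tilde\varphi_\eps(\zeta)=(\zeta,(\zeta-\eps)(\alpha\zeta-1))$, $\alpha=(z_1+z_2-\eps)/[z_1(z_1-\eps)]$, again hitting only two poles.
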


\begin{proof}
Suppose for instance that $|z_2| \le |z_1|^2$. Then $\max (|z_1|, |z_2|)=|z_1|$
and $\delta \le |z_1|$.
Let
$$
\tilde \varphi_\eps (\zeta) := \left( \zeta , \frac{\zeta(\zeta-\eps)}{z_1(z_1-\eps)} z_2 \right) .
$$
Since 
$$
\left|  \frac{z_2}{z_1(z_1-\eps)}  \right| = \left|  \frac{z_2}{z_1^2}  \frac1{1-\frac{\eps}{z_1}}\right|
\le \frac1{\left|1-\frac{|\eps|}{\delta}\right|},
$$
we can choose $0<\gamma = O(|\eps|)$ such that 
$\varphi (\zeta ) := \tilde \varphi \left( \frac{\zeta}{1+\gamma} \right)$
defines a map from $\D$ to $\D^2$.

Now $G_{\eps} \circ \varphi \in SH_-(\D)$, and it has logarithmic singularities 
at $0$ and $(1+\gamma)\eps$, so (using the explicit formula for the Green function
in the unit disk)
$$
G_{\eps} (z_1,z_2) = G_{\eps} \circ \varphi ((1+\gamma)z_1) 
\le \log \left| (1+\gamma)z_1 \frac{(1+\gamma)(z_1-\eps)}{1- |1+\gamma|^2 z_1 \bar \eps} \right|,
$$
which yields the required inequality for $|\eps| <m$. 

If $|z_1| \le |z_2|^2$, we just exchange the roles of the coordinates. If 
$|z_1+z_2| \le |z_1|^2$, we perform an analogous computation with 
$$
\tilde \varphi_\eps (\zeta) := \left( \zeta , (\zeta-\eps)(\alpha \zeta -1) \right) ,
\quad
\alpha := \frac{z_1+z_2 - \eps}{z_1(z_1-\eps)} .
$$
\end{proof}

\subsubsection{Analytic discs for the most common case}

Away from the exceptional region $D_0$, the construction of the analytic disks is more delicate.

\begin{lemma}
\label{above3sym}
Let $\delta >0$.
Let $(z_1,z_2) \in \D^2 \setminus (D_0 \cup D(0,\delta)^2)$, $\left| \frac{z_1}{z_2} +1\right| \ge \frac12$. 

For any $\eta>0$ there exists $m>0$ such that for any $\alpha \in D(0,m)$,
there exists $\varphi \in \mathcal O(\D,\D^2)$ and
$\zeta_1, \zeta_2 \in D(0,\eta)$, $\zeta_3 \in \D $ such that 
$\left| |\zeta_3|^2 - \max (|z_1|,|z_2|) \right| \le \eta$ and
$$
\varphi(0) = (0,0), 
\varphi (\zeta_1) = (\alpha^2, 0),
\varphi (\zeta_2) = (0,\alpha^2),
\varphi (\zeta_3) = (z_1,z_2).
$$
\end{lemma}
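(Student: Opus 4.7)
The plan is to construct $\varphi=(\varphi_1,\varphi_2)$ as a pair of quadratic polynomials and then verify the interpolation conditions along with the constraint $\varphi(\D)\subset\D^2$. Since $\varphi_1$ must vanish at $\zeta=0$ and $\zeta=\zeta_2$, while $\varphi_2$ must vanish at $\zeta=0$ and $\zeta=\zeta_1$, the natural Ansatz is
$$
\varphi_1(\zeta):=A_1\,\zeta(\zeta-\zeta_2),\qquad \varphi_2(\zeta):=A_2\,\zeta(\zeta-\zeta_1).
$$
Then three of the six scalar conditions are automatic, and the remaining four,
\begin{align*}
A_1\zeta_1(\zeta_1-\zeta_2)&=\alpha^2, & A_2\zeta_2(\zeta_2-\zeta_1)&=\alpha^2,\\
A_1\zeta_3(\zeta_3-\zeta_2)&=z_1, & A_2\zeta_3(\zeta_3-\zeta_1)&=z_2,
\end{align*}
determine the four unknowns $A_1,A_2,\zeta_1,\zeta_2$ once $\zeta_3$ is fixed.

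Without loss of generality I assume $|z_1|\ge|z_2|$ (the other case being symmetric). I would fix $\zeta_3$ as a square root of $(1+\tau)z_1$, where $\tau=\tau(\alpha)>0$ is a small correction specified below. Dividing the two $\alpha^2$-equations yields, to leading order in the $\zeta_i$, the relation $\zeta_2/\zeta_1\approx-z_1/z_2$, and substituting back into the first $\alpha^2$-equation gives the asymptotics
$$
\zeta_1^2\;=\;\frac{\alpha^2\, z_2}{z_1+z_2}\bigl(1+o(1)\bigr),\qquad \zeta_2^2\;=\;\frac{\alpha^2\, z_1^2}{z_2(z_1+z_2)}\bigl(1+o(1)\bigr).
$$
The hypothesis $|z_1/z_2+1|\ge1/2$ translates to $|z_1+z_2|\ge|z_2|/2$, while $(z_1,z_2)\notin D_0\cup D(0,\delta)^2$ keeps both $|z_1|$ and $|z_2|$ bounded below by constants depending only on $\delta$. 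Consequently both right-hand sides above are $O(\alpha^2)$ uniformly in $(z_1,z_2)$, and hence $|\zeta_1|,|\zeta_2|=O(|\alpha|)$. The implicit function theorem then promotes these asymptotic formulas into genuine solutions for $|\alpha|$ sufficiently small, putting $\zeta_1,\zeta_2$ inside $D(0,\eta)$ whenever $|\alpha|<m=m(\delta,\eta)$.

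The hard part will be the bidisk condition $\varphi(\D)\subset\D^2$. On $|\zeta|=1$ one has $|\varphi_i(\zeta)|\le|A_i|(1+|\zeta_{3-i}|)$, and the normalization above yields $|A_1|=1/(1+\tau)+O(|\alpha|)$ together with $|A_2|=|z_2/z_1|/(1+\tau)+O(|\alpha|)\le 1/(1+\tau)+O(|\alpha|)$ thanks to $|z_2|\le|z_1|$. Choosing $\tau$ to be a fixed positive multiple of $|\alpha|$ (depending only on $\delta$) beats both the $O(|\alpha|)$ errors in the $A_i$ and the factor $1+|\zeta_{3-i}|=1+O(|\alpha|)$, so that $|\varphi_i|\le 1$ on $\partial\D$ and hence on all of $\D$ by the maximum principle. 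This same choice gives $\bigl||\zeta_3|^2-|z_1|\bigr|=\tau|z_1|=O(|\alpha|)$, so the required estimate $\bigl||\zeta_3|^2-\max(|z_1|,|z_2|)\bigr|\le\eta$ is automatic for $|\alpha|<m$ with $m$ sufficiently small.
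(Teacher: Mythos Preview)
Your approach is essentially the paper's: both build $\varphi$ from the same quadratic Ansatz $\varphi_i(\zeta)=A_i\zeta(\zeta-\zeta_{3-i})$, then perturb to hit $(z_1,z_2)$ and finally absorb an $O(|\alpha|)$ overshoot to land in $\D^2$. The paper parametrizes the perturbation by auxiliary variables $(\xi_1,\xi_2)$ and closes with a contraction argument, then rescales the whole disc via $\zeta\mapsto\zeta/(1+\gamma)$; you solve for $(\zeta_1,\zeta_2)$ directly and inflate $\zeta_3$ by $1+\tau$ instead. These are equivalent bookkeeping choices.

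One point needs care: the implicit function theorem does \emph{not} apply to your system as written, because at $\alpha=0$ the solution is $\zeta_1=\zeta_2=0$ and the Jacobian in $(\zeta_1,\zeta_2)$ vanishes there (the equations are homogeneous of degree~$2$). You must first rescale $\zeta_i=\alpha w_i$; the limiting system becomes $w_1(w_1-w_2)=1$, $w_2(w_2-w_1)=z_1/z_2$, whose Jacobian determinant is $-2(w_1-w_2)^2=-2(z_1+z_2)/z_2$, nonzero precisely thanks to the hypothesis $|z_1/z_2+1|\ge\tfrac12$. After this rescaling the IFT applies and gives the uniform $O(|\alpha|)$ bounds on $\zeta_1,\zeta_2$ that the rest of your argument uses. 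The paper's contraction map is this same rescaled argument in explicit form.
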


\begin{proof}
Case 1. $|z_2|^2 \le |z_1|  \le |z_2| $.

Let $\xi_1, \xi_2 \in D(0,\frac12) \subset \C$. 
We define a mapping from $\D$ to $\C^2$ by
\begin{equation}
\label{approxdisk}
\tilde \varphi_\xi (\zeta) := 
\left(  \frac{z_1}{z_2} (1+\xi_1) \zeta (\zeta - \tilde \zeta_2), (1+\xi_2) \zeta (\zeta - \tilde \zeta_1) \right) , 
\end{equation}
with $ \tilde \zeta_1,  \tilde \zeta_2$ to be defined below. 
We note that $\tilde \varphi_\xi (0) = (0,0)$, 
and
\begin{eqnarray}
\label{values}
\tilde \varphi_\xi (\tilde \zeta_1) &= & \left(  \frac{z_1}{z_2} (1+\xi_1) \tilde \zeta_1 (\tilde \zeta_1 - \tilde \zeta_2),0
\right) , \\
\label{values'}
\tilde \varphi_\xi (\tilde \zeta_2) &= & \left(  0,  (1+\xi_2) \tilde \zeta_2 (\tilde \zeta_2 - \tilde \zeta_1)
\right).
\end{eqnarray}

We will need the auxiliary quantity
$$
\mu := \alpha \left( \frac1{1+\xi_2} + \frac1{1+\xi_1} \frac{z_2}{z_1} \right)^{1/2},
$$
where we choose one particular square root of $1+ \frac{z_2}{z_1}$, and define the 
above square root to be continuous in a neighborhood of $\xi_1=0, \xi_2=0$ (we 
may need to reduce the size of the disk where $\xi_1, \xi_2$ can be chosen).

Note that $\left| \frac{z_1}{z_2} +1\right| \ge \frac12$ implies
$\left| \frac{z_2}{z_1} +1\right| \ge \frac13$, and that 
$ \left| \frac{z_2}{z_1} \right| = \left| \frac{z_2^2}{z_1} \frac1{z_2}\right| \le \frac1\delta $,
so that for $\xi_1, \xi_2$ small enough,
$$
\frac14 |\alpha| \le |\mu| \le \frac2{\sqrt \delta} |\alpha| .
$$

We now set 
$$
\tilde \zeta_1 := \frac{\alpha^2}{(1+\xi_1)\mu} \frac{z_2}{z_1} , \quad
\tilde \zeta_2 :=\tilde \zeta_1 - \mu = -  \frac{\alpha^2}{(1+\xi_2)\mu}.
$$

Substituting this into \eqref{values},  \eqref{values'}, we get
\begin{eqnarray}
\tilde \varphi_\xi (\tilde \zeta_1) &= & \left(  \frac{z_1}{z_2} (1+\xi_1) \tilde \zeta_1 \mu ,0\right) 
= (\alpha^2, 0), \\
\tilde \varphi_\xi (\tilde \zeta_2) &= & \left(  0,  - (1+\xi_2) \tilde \zeta_2 \mu \right)
=  (0,\alpha^2).
\end{eqnarray}

We have an analytic disk passing through all three poles, now we need to have it go through 
the point $z$. Let $\tilde \zeta_3 := z_2^{1/2}$ (it doesn't matter which square root we choose).
Then
\begin{multline*}
\tilde \varphi_\xi (\tilde \zeta_3) - (z_1,z_2) 
= \left( z_1 \xi_1 + \frac{(1+\xi_1)\alpha^2}{(1+\xi_2)\mu} \frac{z_1}{z_2^{1/2}} ,
z_2 \xi_2 - \frac{(1+\xi_2)\alpha^2}{(1+\xi_1)\mu} \frac{z_2^{3/2}}{z_1} \right) \\
=: \left( z_1 \left( \xi_1- \Phi_1(\xi,\alpha) \right) ,  z_2 \left( \xi_2- \Phi_2(\xi,\alpha) \right) \right) .
\end{multline*}
Since $\alpha^2/\mu=O(|\alpha|)$ and $\left| \frac{z_1}{z_2^{1/2}} \right|
\le 1/\eta$, $\left| \frac{z_2^{3/2}}{z_1} \right|
\le 1/\eta^2$,
for $|\alpha|$ small enough, the map $\xi \mapsto \Phi (\xi,\alpha) $ is contractive,
so there exists a unique $\xi^0$ such that $\xi^0= \Phi (\xi^0,\alpha)$, and
$\|\xi^0\| \le C \delta^{-1} |\alpha|$. Let $\tilde \varphi (\zeta):= \tilde \varphi_{\xi^0} ( \zeta)$. 

The map  $\tilde \varphi$ hits the correct points, but in general $ \tilde \varphi (\D)\not\subset \D^2$.
However, the estimates on $\mu$ and $\xi^0$ imply that  
$$
\tilde \varphi (\zeta) = \left( \frac{z_1}{z_2}\zeta^2, \zeta^2\right) + O(\delta^{-1} |\alpha|),
$$
so using the Schwarz Lemma there exists $0<\gamma = O(\delta^{-1} |\alpha|)$ such that 
letting
$$
\varphi (\zeta ) := \tilde \varphi \left( \frac{\zeta}{1+\gamma} \right) , \quad
\zeta_j := (1+\gamma) \tilde \zeta_j, 1\le j \le 3,
$$
we have a map and points in the disk satisfying the conclusions of the Lemma for 
$|\alpha|$ small enough. 

Case 2. $|z_1|^2 \le |z_2|  \le |z_1| $.

The computations are almost exactly the same once we exchange $z_1$ and $z_2$.
\end{proof}

\subsubsection{Upper estimate from the analytic disks}

It is easy to deduce from the previous Lemma a rough global estimate on the function
$G_{\eps}$.

\begin{lemma}
\label{roughestabove}
Then there exists $C>0$ such that for any
$\delta >0$, there is an $m=m(\delta)>0$ such that for any $\eps \in D(0,m)$, 
 for any $(z_1,z_2) \in \D^2 \setminus (D_0 \cup D(0,\delta)^2)$,
$$
G_{\eps} (z_1,z_2) \le \frac32 \log \left( \max (|z_1|, |z_2|) \right) + C .
$$
\end{lemma}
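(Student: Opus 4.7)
The strategy is to pull $G_\eps$ back along the analytic disc produced by Lemma \ref{above3sym} and then compare $G_\eps\circ\varphi$ with the sum of three Green functions of $\D$ at the preimages of the poles of $S_\eps$. First restrict to $z=(z_1,z_2)\in (D_1\cup D_2)\setminus D(0,\delta)^2$, where the hypothesis $|z_1/z_2+1|\ge 1/2$ of Lemma \ref{above3sym} is in force. Fix $\eta:=\delta/2$ and apply Lemma \ref{above3sym} with $\alpha^2=\eps$: for $|\eps|<m(\delta)$ (which forces $|\alpha|$ small enough for the contraction mapping in that lemma to converge) this produces a disc $\varphi\in\mathcal O(\D,\D^2)$ sending $0,\zeta_1,\zeta_2$ to the three poles of $S_\eps$ and $\zeta_3$ to $(z_1,z_2)$, with $|\zeta_1|,|\zeta_2|\le\delta/2$ and $\bigl||\zeta_3|^2-\max(|z_1|,|z_2|)\bigr|\le\delta/2$. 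Inspection of the formula \eqref{approxdisk} shows that each of the three poles of $S_\eps$ is attained with multiplicity exactly one, so $u:=G_\eps\circ\varphi$ is a negative subharmonic function on $\D$ with simple logarithmic singularities at $0,\zeta_1,\zeta_2$.

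A standard maximum-principle comparison (the difference of $u$ with the sum of the three Green functions of $\D$ at $0,\zeta_1,\zeta_2$ extends subharmonically across those points and is nonpositive on $\partial\D$) yields
\[
u(\zeta)\ \le\ \log|\zeta|+\log\!\left|\frac{\zeta-\zeta_1}{1-\bar\zeta_1\zeta}\right|+\log\!\left|\frac{\zeta-\zeta_2}{1-\bar\zeta_2\zeta}\right|,\qquad \zeta\in\D.
\]
Evaluating at $\zeta=\zeta_3$ and using $\max(|z_1|,|z_2|)\ge\delta$ together with $\eta=\delta/2$, one gets $|\zeta_3|^2\in[\delta/2,\,3/2]$, hence (after shrinking $\delta$ so that $\delta\le 1/2$) $|\zeta_j|\le|\zeta_3|/2$ for $j=1,2$. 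This forces $|\zeta_3-\zeta_j|\in[|\zeta_3|/2,\,3|\zeta_3|/2]$ and $|1-\bar\zeta_j\zeta_3|\in[1/2,\,3/2]$, so each Blaschke factor contributes $\log|\zeta_3|+O(1)$ with an absolute constant. Combined with $|\zeta_3|^2\le 3\max(|z_1|,|z_2|)/2$, this produces
\[
G_\eps(z)=u(\zeta_3)\ \le\ 3\log|\zeta_3|+O(1)\ \le\ \tfrac{3}{2}\log\max(|z_1|,|z_2|)+C,
\]
with $C$ independent of $\delta$ and $z$, as required.

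It remains to handle the complementary region $\{|z_1/z_2+1|<1/2\}\cap(\D^2\setminus(D_0\cup D(0,\delta)^2))$, where $|z_1|$ and $|z_2|$ are comparable and $|z_1+z_2|\ll|z_2|$. There a parallel family of discs is built by an Ansatz analogous to \eqref{approxdisk}, with the exceptional smallness relocated onto $z_1+z_2$ rather than onto $z_1$ or $z_2$; equivalently, a linear change of coordinates in the target sends the bad direction to a coordinate axis, transforms the three poles of $S_\eps$ by bounded amounts, and reduces the problem to the previous case, the contraction-mapping step going through verbatim. Cases with $\delta\ge 1/2$ are handled trivially from $G_\eps\le 0$ and $\log\max(|z_1|,|z_2|)\ge\log\delta$, the excess being absorbed into $C$. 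The main technical obstacle is the calibration between $\eta$ and $\delta$: $\eta$ must be small enough (relative to $|\zeta_3|^2\ge\delta$) that each Blaschke factor at $\zeta_3$ loses only a bounded amount, yet the contraction-mapping step in Lemma \ref{above3sym} still needs room to converge, which forces $m(\delta)$ to shrink accordingly.
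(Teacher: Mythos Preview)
Your treatment of the region where Lemma~\ref{above3sym} applies is correct and matches the paper's argument: pull back $G_\eps$ along the disc, majorize by the sum of three one-dimensional Green functions, and estimate each Blaschke factor by $|\zeta_3|$ up to an absolute constant. One small slip: membership in $D_1\cup D_2$ means $|z_2/z_1+1|>\tfrac12$, whereas Lemma~\ref{above3sym} requires $|z_1/z_2+1|\ge\tfrac12$; these are not the same region (inversion sends $\bar D(-1,\tfrac12)$ to $\bar D(-\tfrac43,\tfrac23)$). The paper simply splits according to the hypothesis of Lemma~\ref{above3sym} rather than according to the $D_j$.

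The genuine gap is in the complementary region $\{|z_1/z_2+1|<\tfrac12\}$. Your two suggestions there are both too vague to be a proof. Building ``a parallel family of discs with the exceptional smallness relocated onto $z_1+z_2$'' would require redoing the contraction-mapping construction of Lemma~\ref{above3sym} from scratch, and you do not carry this out. The alternative ``linear change of coordinates'' that ``transforms the three poles of $S_\eps$ by bounded amounts'' does not reduce to the previous case: a linear map such as $(z_1,z_2)\mapsto(z_1,-z_1-z_2)$ sends $S_\eps$ to $\{(0,0),(\eps,-\eps),(0,-\eps)\}$, which is \emph{not} $S_\eps$, so Lemma~\ref{above3sym} does not apply directly and a perturbation argument would be needed on top.

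The paper sidesteps this entirely with a trick you may find worth remembering: the \emph{affine} involution
\[
L(z_1,z_2)=(z_1,\ \eps-z_1-z_2)
\]
permutes the three poles of $S_\eps$ exactly and sends $\{|z_1/z_2+1|<\tfrac12\}$ into $\{|w_1/(w_2-\eps)+1|>2\}$. Since $L^{-1}(\D^2)\subset 3\D^2$, domain monotonicity gives $G_\eps^{\D^2}(z)\le G_{3\eps}^{\D^2}(L(3z))$, and the right-hand side now falls under the case already handled. The constant $C$ picks up only a fixed $\tfrac32\log 3$. This exploits the symmetry of the specific configuration $S_\eps$ and avoids any new disc construction or perturbation estimate.
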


Notice that for $(z_1,z_2) \in D_0 \setminus D(0,\delta)^2$, Lemma \ref{aboveD0}
gives an even better estimate, so that the result actually holds on the whole of
$\D^2 \setminus  D(0,\delta)^2$.

\begin{proof}
If $\left| \frac{z_1}{z_2} +1\right| \ge \frac12$, then by choosing an analytic
disk $\varphi$ as in Lemma \ref{above3sym} with $\alpha = \eps^{1/2}$, 
we have $G_{\eps} \circ \varphi \in SH_- (\D)$, with three logarithmic poles,
and a reasoning similar to the proof of Lemma \ref{aboveD0} shows that
$G_{\eps} (z) \le \frac32 \log \left( \max (|z_1|, |z_2|) \right) + 1$
when  $\eta $ is small enough, and therefore when $|\eps|$ is small enough.

To deal with the remaining case, consider the affine biholomorphism
$$
L (z_1,z_2) := (z_1, \eps -z_1-z_2).
$$
It sends $S_{3,\eps} $ to itself (changing the roles of the points) and 
the set $\left\lbrace \left| \frac{z_1}{z_2} +1\right| < \frac12 \right\rbrace $
to $\left\lbrace \left| \frac{w_1}{w_2-\eps} +1\right| > 2 \right\rbrace $.  Also,
$L^{-1}(\D^2) \subset 3 \D^2$ for $|\eps|\le 1$. So
$$
G_{\eps}^{\D^2} (\frac{z}3) = G_{3\eps}^{3\D^2} (z) \le  G_{3\eps}^{L^{-1}(\D^2)} (z) 
= G_{3\eps}^{\D^2} (L(z) ),
$$
therefore for $|\eps|$ small enough with respect to $\delta$,
we have $\left| \frac{L_1(3z)}{L_2(3z)}+1\right| > \frac12$ and
\begin{multline*}
G_{\eps}^{\D^2} (z) \le G_{3\eps}^{\D^2} (L(3z) ) 
\\
\le 
\frac32 \log \left( \max (|L_1(3z)|, |L_2(3z)|) \right) + 1
\le  \frac32 \log \left( \max (|z_1|, |z_2|) \right) + C.
\end{multline*}
. 
\end{proof}

\subsubsection{End of proof of Lemma \ref{3sym}}
\begin{proof*}

Any compact subset of $ \D^2\setminus \{0\}$ is contained in
$ \D^2\setminus D(0,\delta)^2$ for some $\delta >0$.

Case 1. $|z_1|^2 \le |z_2|  \le  |z_1| $.

Consider the holomorphic map from $\D$ to $\D^2$ given by $\varphi (\zeta) = (\zeta, \frac{z_2}{z_1} \zeta)$,
and $u:=G_{\eps}\circ \varphi \in SH_-(\D)$. By Lemma \ref{roughestabove}, for any
$\delta_1>0$ and $\zeta$ such that $|\zeta|=\delta_1$, $|\eps|<m(\delta_1)$, we have $u(\zeta) \le \frac32 \log \delta_1 +C$.
On the other hand, when $|z_2|  <  |z_1|$, if $|\zeta|= \left| \frac{z_2}{z_1} \right|$, then $\varphi (\zeta) \in
\{ w : |w_2|\le |w_1|^2\}$ and by Lemma \ref{aboveD0}, for any $\eta_1>0$ we can choose $|\eps|$ small
enough so that 
$$
u(\zeta) \le 2 \log \left| \frac{z_2}{z_1} \right| + \eta_1.
$$
Applying the three-circle theorem, we get, for any $\zeta \in D(0, \left| \frac{z_2}{z_1} \right|)\setminus D(0, \delta_1 )$,
\begin{equation}
\label{threecircineq}
u(\zeta) \le 
\frac{\log \left| \frac{z_2}{z_1} \right| - \log|\zeta|}{\log \left| \frac{z_2}{z_1} \right|- \log \delta_1}  
\left( \frac32 \log \delta_1 +C \right)  
+ 
\frac{\log|\zeta|- \log \delta_1}{\log \left| \frac{z_2}{z_1} \right|- \log \delta_1}  
\left( 2 \log \left| \frac{z_2}{z_1} \right| + \eta_1 \right) .
\end{equation}
This inequality still holds in the case $|z_2|  = |z_1| $, simply using the fact that $u\le 0$.
We may apply it for $\zeta = z_1$. Let $\eta_2>0$. Then 
$$
\frac{\log \left| \frac{z_2}{z_1} \right| - \log|z_1|}{\log \left| \frac{z_2}{z_1} \right|- \log \delta_1}  
\left(  \frac32 \log \delta_1 +C \right)  
=\left( \log |z_2| - 2\log|z_1| \right) 
\frac{-\frac32 + \frac{C}{|\log \delta_1|}}{1-\frac{|\log \left| \frac{z_2}{z_1} \right||}{|\log \delta_1|} },
$$
and for $\delta_1$ small enough (depending on $\eta_2$ and $\delta$),
$$
\frac{-\frac32 + \frac{C}{|\log \delta_1|}}{1-\frac{|\log \left| \frac{z_2}{z_1} \right||}{|\log \delta_1|} }
\le -\frac32 + \frac{C}{|\log \delta_1|} \le -\frac32 + \frac{\eta_2}{|\log \delta|} 
\le -\frac32 + \frac{\eta_2}{\log |z_2| - 2\log|z_1| },
$$
since $0< \log |z_2| - 2\log|z_1| \le - \log|z_1| \le -\log \delta$.  Finally the first term of 
the right hand side of
\eqref{threecircineq} is bounded above by 
$$
-\frac32 \left( \log |z_2| - 2\log|z_1|\right) +\eta_2.
$$
For the second term, note first that if $\left( 2 \log \left| \frac{z_2}{z_1} \right| + \eta_1 \right)>0$,
we can just bound the second term by that quantity.  Otherwise,
for $|z_2|<|z_1|$ and 
$\delta_1$ small enough (depending only on $\delta$)
\begin{multline*}
\frac{\log|z_1|- \log \delta_1}{\log \left| \frac{z_2}{z_1} \right|- \log \delta_1}  
=
\frac{1-\frac{\left|\log  |z_1 |\right|}{|\log \delta_1|} }{1-\frac{|\log \left| \frac{z_2}{z_1} \right||}{|\log \delta_1|} }
\ge
1-\frac{\left|\log  |z_1 |\right|}{|\log \delta_1|}
\ge 
\\
1 - \frac{\eta_2}{2 \left| \log \delta\right|}
\ge
1 - \frac{\eta_2}{2 \left| \log \left| \frac{z_2}{z_1} \right| \right|}
\ge
1 + \frac{\eta_2}{2  \log \left| \frac{z_2}{z_1} \right| +\eta_1},
\end{multline*}
and therefore
the second term verifies
$$
\frac{\log|z_1|- \log \delta_1}{\log \left| \frac{z_2}{z_1} \right|- \log \delta_1}  
\left( 2 \log \left| \frac{z_2}{z_1} \right| + \eta_1 \right) 
\le
2  \log \left| \frac{z_2}{z_1} \right| +\eta_1 +\eta_2.
$$
For $|z_1|=|z_2|$, this inequality is immediate
Putting together the two estimates, for $\delta_1$ small enough (depending on $\delta, \eta_2$),
and for $|\eps|$ small enough (depending  on $\delta_1, \eta_1$),
$$
G_{\eps} (z_1,z_2)= u(z_1) \le \log|z_1| + \frac12 \log|z_2| +\eta_1+2\eta_2,
$$
which yields the upper estimate from Lemma \ref{3sym}.

Case 2. $|z_2|^2 \le |z_1|  \le  |z_2| $.

The same proof works, exchanging the roles of the coordinates.

Case 3. $|z_1|^2 \le |z_1+z_2|  \le  \frac12 |z_1| $.

Using the same map $\varphi $, defined only for $|\zeta| < 2/3$, 
computations similar to those of Case 1 (where $z_2$ is replaced by $z_1+z_2$)
yield the desired result. 
\end{proof*}

\subsection{Upper estimates for the degenerate case}

From now on, $S_\eps$ and $G_{\eps}$ are as in Lemma \ref{3sing}.

\subsubsection{Upper estimate for the most common case}

\begin{lemma}
\label{above3sing}
Let $\delta >0$.
Let $(z_1,z_2) \in \D^2 \setminus (D_0 \cup D(0,\delta)^2)$. 
Let $s$ be a function in a neighborhood of $0$ is $\C$ such
that $\lim_{\zeta\to0} s(\zeta)=0$.  

Then there exists $m>0$ such that for any $\alpha \in D(0,m)$, and  
for any $\eta>0$ there exists $\varphi \in \mathcal O(\D,\D^2)$ and
$\zeta_1, \zeta_2 \in D(0,\eta)$, $\zeta_3 \in \D $ such that 
$\left| |\zeta_3|^2 - \max (|z_1|,|z_2|) \right| \le \eta$ and
$$
\varphi(0) = (0,0), 
\varphi (\zeta_1) = ([\alpha s(\alpha)]^2 , 0),
\varphi (\zeta_2) = (0,\alpha^2),
\varphi (\zeta_3) = (z_1,z_2).
$$
\end{lemma}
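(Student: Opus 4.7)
The plan is to follow the proof of Lemma \ref{above3sym} almost verbatim, replacing the first auxiliary pole $(\alpha^2, 0)$ by $([\alpha s(\alpha)]^2, 0)$.

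The first step is to try the same ansatz as in the generic case,
$$
\tilde\varphi_\xi(\zeta) := \left( \frac{z_1}{z_2}(1+\xi_1)\zeta(\zeta - \tilde\zeta_2),\ (1+\xi_2)\zeta(\zeta - \tilde\zeta_1) \right),
$$
with $\xi = (\xi_1, \xi_2) \in D(0,\tfrac12)^2$ to be determined. Imposing $\tilde\varphi_\xi(\tilde\zeta_1) = ([\alpha s(\alpha)]^2, 0)$, $\tilde\varphi_\xi(\tilde\zeta_2) = (0, \alpha^2)$, and writing $\mu := \tilde\zeta_1 - \tilde\zeta_2$, the same algebraic manipulations as in Lemma \ref{above3sym} yield explicit formulas for $\tilde\zeta_1, \tilde\zeta_2$ and the consistency condition
$$
\mu^2 = \frac{\alpha^2}{1+\xi_2} + \frac{[\alpha s(\alpha)]^2}{1+\xi_1}\cdot\frac{z_2}{z_1}.
$$

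Next I would choose a continuous branch of $\mu$ on a neighborhood of $\xi = 0$. This step is strictly easier than in the generic case: on $\D^2 \setminus (D_0 \cup D(0,\delta)^2)$ the inequalities $|z_1|^2 < |z_2|$ and $|z_2|^2 < |z_1|$ defining the complement of $D_0$, combined with $\max(|z_1|,|z_2|) \ge \delta$, force $|z_1|,|z_2| \in [\delta^2,1)$ and hence bound $|z_2/z_1|$ in terms of $\delta$; since $s(\alpha) \to 0$, the second summand is $o(\alpha^2)$ and $\mu = \alpha(1+o(1))$ is well defined, without needing any analog of the restriction $|z_2/z_1 + 1| \ge 1/2$ imposed in Lemma \ref{above3sym}. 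I would then split into the sub-cases $|z_1| \le |z_2|$ and $|z_2| < |z_1|$, setting $\tilde\zeta_3 := z_2^{1/2}$ in the first and $z_1^{1/2}$ in the second (with the symmetric modification of the ansatz in the second sub-case). Requiring $\tilde\varphi_\xi(\tilde\zeta_3) = (z_1, z_2)$ yields a fixed-point equation $\xi = \Phi(\xi, \alpha)$, and by the same Banach-contraction argument as in Lemma \ref{above3sym} there is a unique fixed point $\xi^0 = O(|\alpha|)$ for $|\alpha|$ small. The proof ends by applying the Schwarz lemma: there exists $\gamma = O(|\alpha|)$ so that $\varphi(\zeta) := \tilde\varphi_{\xi^0}(\zeta/(1+\gamma))$ maps $\D$ into $\D^2$, and $\zeta_j := (1+\gamma)\tilde\zeta_j$ satisfies the stated size conditions; note that $\tilde\zeta_1 = O(|\alpha| s(\alpha)^2)$ is in fact smaller than its analog in the generic setting, which only helps in placing $\zeta_1$ inside $D(0,\eta)$.

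The main obstacle I anticipate is in the asymmetric sub-case $|z_2| < |z_1|$, where the small pole $([\alpha s(\alpha)]^2, 0)$ lies in the same coordinate direction as the dominant $z_1$, so the bookkeeping for the contraction estimate differs from the corresponding step in the generic proof. I expect this to remain tractable since the factor $s(\alpha)^2$ only enters multiplicatively into quantities that were already bounded in Lemma \ref{above3sym}, so the contraction should survive with constants depending only on $\delta$ and $\eta$.
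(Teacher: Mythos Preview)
Your proposal is correct and follows essentially the same approach as the paper: the paper also modifies the proof of Lemma~\ref{above3sym} by inserting the factor $s(\alpha)^2$ into the definitions of $\mu$ and $\tilde\zeta_1$, observes that the bound $|z_2/z_1|\le 1/\delta$ together with $s(\alpha)\to 0$ makes the second summand in $\mu^2$ negligible (so no analogue of the $|z_2/z_1+1|\ge 1/2$ restriction is needed), and notes that the modified $\Phi_2$ only improves contractivity. Your anticipated asymmetry in the sub-case $|z_2|<|z_1|$ is handled in the paper exactly as you suggest, by swapping the roles of the coordinates in the ansatz; the only point you underspecify is that in this sub-case $\mu$ is comparable to $\alpha\sqrt{|z_1/z_2|}$ rather than to $\alpha$ itself, but since $1\le |z_1/z_2|\le 1/\delta$ this still gives $\tfrac1{\sqrt2}|\alpha|\le|\mu|\le \tfrac{2}{\sqrt\delta}|\alpha|$ and the rest of your argument goes through unchanged.
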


\begin{proof}

We proceed as in the proof of Lemma \ref{above3sym}, with a few changes.

Case 1. $|z_2|^2 \le |z_1|  \le |z_2| $.

Let $\xi_1, \xi_2 \in D(0,\frac12) \subset \C$. 
We define the mapping $\tilde \varphi_\xi $ from $\D$ to $\C^2$ as before, but
 $ \tilde \zeta_1,  \tilde \zeta_2$ are defined by
$$
\mu := \alpha \left( \frac1{1+\xi_2} + \frac{s(\alpha)^2}{1+\xi_1} \frac{z_2}{z_1} \right)^{1/2},
$$
$$
\tilde \zeta_1 := \frac{\alpha^2 s(\alpha)^2}{(1+\xi_1)\mu} \frac{z_2}{z_1} , \quad
\tilde \zeta_2 :=\tilde \zeta_1 - \mu = -  \frac{\alpha^2}{(1+\xi_2)\mu}.
$$
This ensures that $\tilde \varphi_\xi $ takes the required values at $ \tilde \zeta_1$ and $  \tilde \zeta_2$.

As before, $ \left| \frac{z_2}{z_1} \right| \le \frac1\delta $,
so that for $\xi_1, \xi_2$ small enough and $|\alpha|\le m(\delta)$ ,
$\left| \frac{s(\alpha)^2}{1+\xi_1} \frac{z_2}{z_1}\right| \le \frac16$, thus
$$
\frac12 |\alpha| \le |\mu| \le 2 |\alpha| .
$$
The only point that remains to be modified in the previous proof is that 
$$
\Phi_2(\xi, \alpha):= - \frac{(1+\xi_2)\alpha^2s(\alpha)^2}{(1+\xi_1)\mu} \frac{z_2^{1/2}}{z_1},
$$
which can only improve the contractivity of $\Phi$. 

Case 2. $|z_1|^2 \le |z_2|  \le |z_1| $.

There we set
$$
\mu := \alpha \left( \frac{z_1}{z_2}\frac1{1+\xi_2} + \frac{s(\alpha)^2}{1+\xi_1}  \right)^{1/2},
$$
 $$
\tilde \zeta_1 := -\frac{\alpha^2s(\alpha)^2}{(1+\xi_1)\mu}  , \quad
\tilde \zeta_2 :=\tilde \zeta_1 + \mu = -  \frac{\alpha^2}{(1+\xi_2)\mu}\frac{z_1}{z_2}.
$$
Again, we see that $1\le \left| \frac{z_1}{z_2} \right| \le \frac1\delta $,
so for $\alpha$ small enough,
$$
\frac1{\sqrt{2}} |\alpha| \le |\mu| \le \frac2{\sqrt{\delta}} |\alpha|.
$$
The proof proceeds as before, with the single change
$$
\Phi_2(\xi, \alpha) :=  \frac{(1+\xi_2)\alpha^2s(\alpha)^2}{(1+\xi_1)\mu} \frac{1}{z_1^{1/2}}.
$$
\end{proof}

The following statement is proved from the previous Lemma exactly as Lemma \ref{roughestabove}, with a simpler proof as we don't need to consider the values
of $\frac{z_1}{z_2} +1$. 

\begin{cor}
\label{roughestsing}
Under the hypotheses of Lemma \ref{above3sing},
 there exists $C>0$ such that for any
$\delta >0$, there is an $m=m(\delta)>0$ such that for any $\eps \in D(0,m)$, 
 for any $(z_1,z_2) \in \D^2 \setminus (D_0 \cup D(0,\delta)^2)$,
$$
G_{\eps} (z_1,z_2) \le \frac32 \log \left( \max (|z_1|, |z_2|) \right) + C .
$$
\end{cor}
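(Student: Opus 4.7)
The plan is to mimic the proof of Lemma \ref{roughestabove}, but using the analytic disks furnished by Lemma \ref{above3sing} in place of those from Lemma \ref{above3sym}. The key simplification, already flagged in the statement, is that Lemma \ref{above3sing} places no restriction like $|z_1/z_2+1|\ge 1/2$ on the target point, so the delicate workaround using the affine map $L$ (needed in Lemma \ref{roughestabove} to handle the ``diagonal'' locus) is unnecessary here.

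First I would match the free parameters of Lemma \ref{above3sing} to the poles of $S_\eps$. Taking $\alpha=\eps^{1/2}$ and $s(\alpha)=(\rho(\eps)/\eps)^{1/2}$ we have $\alpha s(\alpha)=\rho(\eps)^{1/2}$, and the hypothesis $\rho(\eps)/\eps\to 0$ in Lemma \ref{3sing} guarantees that $s(\alpha)\to 0$ as $\alpha\to 0$, so the hypothesis of Lemma \ref{above3sing} on $s$ is satisfied. With this choice, for $|\eps|<m(\delta)$ small enough, Lemma \ref{above3sing} produces a holomorphic map $\varphi\colon\D\to\D^2$ and points $\zeta_1,\zeta_2\in D(0,\eta)$, $\zeta_3\in\D$ with $\bigl||\zeta_3|^2-\max(|z_1|,|z_2|)\bigr|\le\eta$, such that
$$
\varphi(0)=(0,0),\quad \varphi(\zeta_1)=(\rho(\eps),0),\quad \varphi(\zeta_2)=(0,\eps),\quad \varphi(\zeta_3)=(z_1,z_2).
$$
Thus $\varphi$ passes through all three poles of $S_\eps$ and through the target point.

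Second, I would pull back. The function $u:=G_\eps\circ\varphi$ lies in $SH_-(\D)$ and has logarithmic singularities at $0,\zeta_1,\zeta_2$. Comparing $u$ to the sum of three Green functions of the disk gives
$$
u(\zeta)\le \sum_{j=0}^{2}\log\left|\frac{\zeta-\zeta_j}{1-\bar\zeta_j\zeta}\right|,\qquad \zeta_0:=0,
$$
and evaluating at $\zeta=\zeta_3$, since $\zeta_1,\zeta_2\in D(0,\eta)$, each factor $|\zeta_3-\zeta_j|$ is at most $|\zeta_3|+\eta$, while the denominators are bounded below on any fixed compact subset of $\D$. Choosing $\eta$ much smaller than $|\zeta_3|$ (which is bounded below in terms of $\delta$), we get
$$
G_\eps(z_1,z_2)=u(\zeta_3)\le 3\log|\zeta_3|+C_1\le \tfrac32\log\max(|z_1|,|z_2|)+C,
$$
using $|\zeta_3|^2\approx\max(|z_1|,|z_2|)$.

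I do not expect any real obstacle: the only points that need a little care are that $m(\delta)$ must be chosen small enough that both $|\alpha|=|\eps|^{1/2}$ and $|s(\alpha)|=|\rho(\eps)/\eps|^{1/2}$ fall inside the disk where Lemma \ref{above3sing} is valid (this is automatic since $\rho(\eps)/\eps\to 0$), and that $\eta$ must be taken small compared to $\delta$ so that the estimate $|\zeta_3-\zeta_j|\le (1+o(1))|\zeta_3|$ absorbs into the additive constant $C$. Once those choices are made, the estimate is global on $\D^2\setminus(D_0\cup D(0,\delta)^2)$, which is exactly the content of the corollary.
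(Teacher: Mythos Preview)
Your proposal is correct and matches the paper's own approach exactly: the paper simply says the corollary is proved from Lemma \ref{above3sing} ``exactly as Lemma \ref{roughestabove}, with a simpler proof as we don't need to consider the values of $\frac{z_1}{z_2}+1$,'' and this is precisely what you carry out, including the identification of $\alpha=\eps^{1/2}$, $s(\alpha)=(\rho(\eps)/\eps)^{1/2}$ and the observation that the affine map $L$ is not needed.
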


\subsubsection{Upper estimate near the coordinate axes}

Now we need to estimate the (faster) decrease of the Green function on the 
exceptional region $\check D'_0 := 
\left\{ z \in \D^2 \setminus \{0\} : |z_1| \le |z_2|^{3/2} 
\mbox{ or } |z_2| \le |z_1|^2 \right\}$.  

\begin{lemma}
\label{aboveD'0}
Let $\delta >0$. For any $\eta>0$
there exists $m=m(\delta, \eta)>0$ such that for any $\eps \in D(0,m)$,
for any $(z_1,z_2) \in \check D'_0 \setminus D(0,\delta)^2$,
$$
G_{\eps} (z_1,z_2) \le 2 \log \left( \max (|z_1|, |z_2|) \right) + \eta.
$$
\end{lemma}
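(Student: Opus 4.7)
The plan is to adapt the three-case disk construction in Lemma~\ref{aboveD0} to the degenerate configuration $\rho(\eps)/\eps\to 0$. For each $(z_1,z_2)\in \check D'_0\setminus D(0,\delta)^2$ I would exhibit an analytic disk $\varphi:\D\to\D^2$ passing through at least two of the three poles of $S_\eps=\{(0,0),(\rho(\eps),0),(0,\eps)\}$ and through the target, then exploit the subharmonicity of $G_\eps\circ\varphi$ on $\D$ together with the explicit two-pole Green function of the unit disk. The region $\check D'_0$ splits naturally into two pieces.

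On the first piece $|z_2|\le|z_1|^2$, set
\[
\tilde\varphi_\eps(\zeta):=\Bigl(\zeta,\ \frac{\zeta(\zeta-\rho(\eps))}{z_1(z_1-\rho(\eps))}\,z_2\Bigr),
\]
which sends $0,\rho(\eps),z_1$ to $(0,0),(\rho(\eps),0),(z_1,z_2)$, respectively. The hypothesis together with $\rho(\eps)\to 0$ bounds the second-coordinate coefficient by $1+O(\rho(\eps)/\delta)$, so a rescaling by $1+\gamma_\eps$ with $\gamma_\eps\to 0$ produces a genuine disk in $\D^2$. The two-pole Green function estimate then gives
\[
G_\eps(z_1,z_2)\le\log|(1+\gamma_\eps)z_1|+\log\Bigl|\frac{(1+\gamma_\eps)(z_1-\rho(\eps))}{1-(1+\gamma_\eps)^2 z_1\bar\rho(\eps)}\Bigr|\longrightarrow 2\log|z_1|,
\]
which is exactly $2\log\max(|z_1|,|z_2|)$ plus $o(1)$.

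On the second piece $|z_1|\le|z_2|^{3/2}$, I would first use the exchanged analogue
\[
\tilde\varphi_\eps(\zeta):=\Bigl(\frac{\zeta(\zeta-\eps)}{z_2(z_2-\eps)}\,z_1,\ \zeta\Bigr),
\]
through $(0,0)$, $(0,\eps)$ and $(z_1,z_2)$. When $|z_1|\le|z_2|^2$ the first-coordinate coefficient is of size $1+O(\eps/\delta)$, and the same argument yields $G_\eps(z_1,z_2)\le 2\log|z_2|+o(1)$. For the remaining sub-region $|z_2|^2<|z_1|\le|z_2|^{3/2}$ this disk no longer fits in $\D^2$ without substantial rescaling, so I would replace it by a three-pole disk of the form $\tilde\varphi(\zeta)=(A(\zeta),\beta\zeta(\zeta-\zeta_1))$, with $A$ a cubic vanishing at $0$ and at a small preimage $\zeta_2$ of $(0,\eps)$, with the preimage $\zeta_1$ of $(\rho(\eps),0)$ chosen of order $\sqrt{|\rho(\eps)|}$, and with $|\zeta_3|\sim\sqrt{|z_2|}$. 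In the limit $\rho(\eps)/\eps\to 0$ the two very close preimages $0$ and $\zeta_1$ should effectively merge into a double logarithmic pole of $G_\eps\circ\tilde\varphi$, so that the three-term subharmonic estimate $\log|\zeta_3|+\log|\zeta_3-\zeta_1|+\log|\zeta_3-\zeta_2|$ at the evaluation point approaches $2\log|z_2|+o(1)$.

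The hard step is precisely this last sub-region $|z_2|^2<|z_1|\le|z_2|^{3/2}$. The claimed bound is strictly tighter than what any naive construction provides: a two-pole disk through $(0,0)$ and $(0,\eps)$, after the rescaling required by $|z_1|>|z_2|^2$, only delivers $G_\eps\le \log|z_1|+o(1)$, which gives at best $\tfrac32\log|z_2|$; and the generic three-pole disk of Lemma~\ref{above3sing} evaluates to $3\log|\zeta_3|=\tfrac32\log|z_2|+o(1)$. Bridging the gap to $2\log|z_2|$ forces one to use the rate $\rho(\eps)=o(\eps)$ in an essential way, by selecting $\zeta_1$ in the narrow window dictated by the competing constraints $|\zeta_1|\gtrsim|\rho(\eps)|/|\zeta_2|$ (needed to keep the coefficients of the cubic $A$ bounded so that $\tilde\varphi(\D)\subset\D^2$ up to a $1+o(1)$ rescaling) and $|\zeta_1|\ll|\zeta_2|$ (needed so the two nearby log singularities cleanly combine in the limit). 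Verifying that the resulting system of equations admits a solution with these asymptotics, uniformly in $(z_1,z_2)$ throughout the sub-region, is the genuine technical heart of the lemma.
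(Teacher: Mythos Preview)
Your treatment of the two easy sub-regions $|z_2|\le|z_1|^2$ and $|z_1|\le|z_2|^2$ is fine and matches the paper, which simply refers back to the disks of Lemma~\ref{aboveD0}.

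The genuine gap is in the hard sub-region $|z_2|^2<|z_1|\le|z_2|^{3/2}$. Your proposed disk hits each of the three poles exactly once, at $0,\zeta_1,\zeta_2$, so $G_\eps\circ\tilde\varphi$ has exactly three simple logarithmic singularities. The subharmonic upper bound at $\zeta_3$ is then
\[
\log|\zeta_3|+\log|\zeta_3-\zeta_1|+\log|\zeta_3-\zeta_2|,
\]
and with $|\zeta_3|\sim|z_2|^{1/2}$ and $\zeta_1,\zeta_2$ small this is $3\log|\zeta_3|+o(1)=\tfrac32\log|z_2|+o(1)$, never $2\log|z_2|$. The sentence ``the two very close preimages $0$ and $\zeta_1$ should effectively merge into a double logarithmic pole'' is the error: proximity of preimages does not increase the total mass of singularities of $G_\eps\circ\tilde\varphi$; each preimage of a simple pole of $G_\eps$ contributes exactly one unit, so three preimages can never produce more than $3\log|\zeta_3|$. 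You yourself note a few lines later that any three-pole disk gives only $\tfrac32\log|z_2|$, and the ``narrow window'' argument that follows does not change this arithmetic.

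What is actually needed, and what the paper does, is a disk that hits one of the poles \emph{twice}. Concretely, the paper uses a perturbation of the Neil parabola $\zeta\mapsto(\zeta^3,\zeta^2)$,
\[
\Psi_{\lambda,\mu}(\zeta)=\Bigl(\bigl(\lambda\zeta-\tfrac12 s(\eps)\bigr)(\zeta^2-\mu^2),\ \zeta^2-\bigl(\tfrac{s(\eps)}{2\lambda}\bigr)^2\Bigr),
\]
with $\lambda,\mu$ chosen so that $\Psi_{\lambda,\mu}(\pm\mu)=(0,\eps)$, $\Psi_{\lambda,\mu}(\pm\tfrac{s(\eps)}{2\lambda})\in\{(0,0),(\rho(\eps),0)\}$, and $\Psi_{\lambda,\mu}(\zeta_z)=(z_1,z_2)$ with $|\zeta_z|\sim|z_2|^{1/2}$. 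Now $G_\eps\circ\Psi_{\lambda,\mu}$ has \emph{four} logarithmic singularities, all at points of modulus $o(1)$, and the bound becomes $4\log|\zeta_z|+o(1)=2\log|z_2|+o(1)$. This is precisely the Poletsky device of letting the analytic disk visit a pole with multiplicity greater than one; without it the target estimate in this sub-region is unreachable.
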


\begin{proof}
In the cases  $ |z_2|  \le |z_1|^2 $ or 
$ |z_1|  \le |z_2|^2 $ , the  proof of Lemma \ref{aboveD0} will apply.  

When $|z_2|^2 < |z_1| \le |z_2|^{3/2}$, some new trick is required.  This time, the analytic disk 
we construct will have to pass twice through one of the poles; this should be compared
to Poletsky's theorem \cite{Po-Sh}, \cite{Po}, see also \cite{La-Si},
 where the Green function is recovered from analytic disks 
that may have to hit the pole(s) more than once. Our disks will be perturbations of
the Neil parabola $\zeta \mapsto (\zeta^3, \zeta^2)$.

We write $s(\eps) = \rho(\eps)/\eps = o(1)$.

Choose complex numbers $\lambda, \mu$ such that
$$
\lambda^2 := \frac{z_1}{z_2(z_2-\eps)}
\left( \frac{z_1}{z_2-\eps} + s(\eps) \right) ; \quad
\mu^2 := \eps + \left( \frac{s(\eps)}{2 \lambda} \right)^2.
$$
Let
$$
\Psi_{\lambda, \mu}(\zeta) :=
\left(  \left( \lambda \zeta - \frac12 s(\eps) \right) (\zeta^2-\mu^2),
\zeta^2 -  \left( \frac{s(\eps)}{2 \lambda} \right)^2 
\right) .
$$
Then by construction
$$
\Psi_{\lambda, \mu}(\mu) = \Psi_{\lambda, \mu}(-\mu)=(0,\eps) ,
\Psi_{\lambda, \mu} ( \frac{s(\eps)}{2 \lambda}  ) = (0,0) ,
\Psi_{\lambda, \mu} ( -\frac{s(\eps)}{2 \lambda}  ) = (\eps s(\eps), 0),
$$
so we have a disk passing through all three poles of $G_{\eps}$.
Furthermore, choosing 
$$
\zeta_z := \frac1\lambda \left( \frac{z_1}{z_2-\eps} + s(\eps) \right),
$$
we have $\Psi_{\lambda, \mu}(\zeta_z)= z$. Notice that
$$
\zeta_z^2 = \frac{z_2(z_2-\eps)}{z_1} \left( \frac{z_1}{z_2-\eps} + s(\eps) \right),
$$
so there is some $\eps_0(\delta, \eta)>0$ such that for $|\eps|< \eps_0(\delta, \eta)$,
for any $z \notin D(0,\delta)^2$ such that $|z_2|^2 < |z_1| \le |z_2|^{3/2}$, 
\begin{equation}
\label{modzeta}
\left| |\zeta_z| - |z_2|^{1/2}\right| \le \eta. 
\end{equation}
In particular,
if $z$ remains in a compact subset of $\D^2$ avoiding the origin, by choosing $\eta$
small enough we ensure that $\zeta \in \D$.  We need a more general fact.

Claim.  

Let $\eta>0$, and $\delta >0$.  Then there exists 
 $\eps_1 = \eps_1(\delta, \eta)>0$ such that for any $\eps$
with $|\eps|\le \eps_1$, for any
 $z \in \D^2 \setminus D(0,\delta)^2$ such that 
$|z_2|^2 < |z_1| \le |z_2|^{3/2}$, we have $\Psi_{\lambda, \mu} (D(0,1-\eta) )
\subset \D^2$. 

Proof of the Claim.

For $|\eps|\le \delta/2$, $|z_2|/2 \le |z_2-\eps| \le 2|z_2|$, so
$$
|\lambda|^2 \ge \left| \frac{z_1}{2z_2^2} \right| 
\left(  \left| \frac{z_1}{2z_2} \right| - |s(\eps)| \right) 
\ge \left| \frac{z_1^2}{8z_2^3} \right| \ge \frac\delta8,
$$
for $\eps$ small enough. So when $|\zeta|\le 1-\eta$, 
$$
\left| \Psi_{\lambda, \mu,2}(\zeta)\right| 
\le (1-\eta)^2 + \frac{2 |s(\eps)|^2}\delta <1
$$
for $\eps$ small enough. 

In a similar way, given $\eta'$, 
for $\eps$ small enough depending on $\delta$ and $\eta'$, we have
$|z_2| \le (1+\eta') |z_2-\eps|$, so
$$
|\lambda|^2 \le (1+\eta')^2 \left| \frac{z_1}{z_2^2} \right| 
\left(  \left| \frac{z_1}{z_2} \right| +  \frac{|s(\eps)|}{(1+\eta') } \right) 
\le (1+\eta')^3 \left| \frac{z_1^2}{z_2^3} \right| \le (1+\eta')^3
$$
for  $\eps$ small enough. Choose $\eta'$ so that 
$(1+\eta')^3 = (1+\eta)$.
When $|\zeta|\le 1-\eta$, 
$$
\left| \Psi_{\lambda, \mu,1}(\zeta)\right| 
\le
\left(  (1+\eta)  (1-\eta) + \frac12 |s(\eps)| \right) 
\left( (1-\eta)^2 + |\eps| +  \frac{4|s(\eps)|^2 }{\delta} \right) <1
$$
for  $\eps$ small enough.  \hfill \qed

So now the function $v(\zeta):= G_{\eps} \left( \Psi_{\lambda, \mu}((1-\eta)\zeta) \right) $
is negative and subharmonic on $\D$.  Furthermore, it has logarithmic poles at the points
$\pm \frac\mu{1-\eta}$ and $\pm  \frac{s(\eps)}{2 \lambda(1-\eta)} $ ; in the cases when $\mu=0$ or
$s(\eps)=0$, we get a double logarithmic pole at the corresponding point. 

Denote $d_{\D}(\zeta, \xi):= \left| \frac{\zeta-\xi}{1-\zeta \bar \xi} \right|$. Then
\begin{multline*}
G_{\eps} (z) = v(\zeta_z) \le
\log d_{\D}(\zeta_z, \frac\mu{1-\eta}) + \log d_{\D}(\zeta_z, -\frac\mu{1-\eta}) \\
+ \log d_{\D}(\zeta_z, \frac{s(\eps)}{2 \lambda (1-\eta)})  + 
\log d_{\D}(\zeta_z, -\frac{s(\eps)}{2 \lambda (1-\eta)}). 
\end{multline*}
By \eqref{modzeta}, choosing
$m(\delta, \eta)$ accordingly, we have, for $|\eps|\le m$,
$G_{\eps} (z) \le 4 \log |z_2|^{1/2} +O(\eta)$. Changing 
the value of $\eta$, we have the conclusion.  
\end{proof}

\subsubsection{End of proof of Lemma \ref{3sing}}
\begin{proof*}

In the case where $|z_2|\le |z_1|$, the proof goes exactly as for Lemma \ref{3sym}
(without having to take into account the value of $\frac{z_2}{z_1}+1$).

When $|z_1|\le |z_2|$, it is enough to consider the case where $|z_1|\ge |z_2|^{3/2}$
(Lemma \ref{aboveD'0} takes care of the remaining case). 

Consider the analytic disk
$$
\varphi (\zeta) = \left( \frac{z_1}{z_2} \zeta, \zeta \right),
\mbox{ and } u(\zeta):=  G_{\eps} \left(\varphi(\zeta) \right) .
$$
Let $\delta_1<\delta$, to be chosen later. By Corollary \ref{roughestsing},
for $|\zeta| = \delta_1$,
$$
u(\zeta)=  G_{\eps} \left(\varphi(\zeta) \right) \le \frac32 \log \delta_1 +C.
$$
On the other hand, if 
$|\zeta|= \left| \frac{z_1}{z_2} \right|^2$, then $\varphi (\zeta) \in
\{ w : |w_1|\le |w_2|^{3/2}\}$ and by Lemma \ref{aboveD'0}, for any $\eta_1>0$ we can choose $|\eps|$ small
enough so that 
$$
u(\zeta) \le 4 \log \left| \frac{z_1}{z_2} \right| + \eta_1.
$$
Applying the three-circle theorem for $\zeta = z_2$, we get 
\begin{multline}
\label{3circsing}
G_{\eps} (z) = u(z_2) \le 
\frac{2 \log \left| \frac{z_1}{z_2} \right| - \log|z_2|}{2 \log \left| \frac{z_1}{z_2} \right| - \log \delta_1}  
\left(  \frac32 \log \delta_1 +C \right)  \\
+
\frac{\log|z_2|- \log \delta_1}{2 \log \left| \frac{z_1}{z_2} \right|- \log \delta_1}  
\left( 4 \log \left| \frac{z_1}{z_2} \right| + \eta_1 \right) .
\end{multline}
Let $\eta_2>0$. In order to estimate the first term in the right hand side of  the above inequality, we use
$$
\frac{\frac32 \log \delta_1 +C }{2 \log \left| \frac{z_1}{z_2} \right| - \log \delta_1}  
=
 \frac{-\frac32 + \frac{C}{|\log \delta_1|}}{1+\frac{2 \log \left| \frac{z_1}{z_2} \right|}{|\log \delta_1|} }
\le -\frac32 + \frac{C}{|\log \delta_1|},
$$
for $\delta_1 $ small enough.   Notice that $2 \log \left| \frac{z_1}{z_2} \right| - \log|z_2|=
\log \left| \frac{z_1^2}{z_2^3} \right| \ge 0$, and also $\log \left| \frac{z_1^2}{z_2^3}\right| \le 
- \log |z_2|\le -\log \delta$, so
$$
-\frac32 + \frac{C}{|\log \delta_1|} 
\le -\frac32 + \frac{\eta_2}{|\log \delta|}
\le  -\frac32 + \frac{\eta_2}{\log \left| \frac{z_1^2}{z_2^3} \right|}.
$$
To estimate the second term in the right hand side of \eqref{3circsing}, it is
enough to consider the case where $4 \log \left| \frac{z_1}{z_2} \right| + \eta_1<0$.
We use
\begin{multline*}
\frac{\log|z_2|- \log \delta_1}{2 \log  \left| \frac{z_1}{z_2} \right|- \log \delta_1}  
=
\frac{1-\frac{\left|\log  |z_2 |\right|}{|\log \delta_1|} }{1-\frac{2|\log \left| \frac{z_1}{z_2} \right||}{|\log \delta_1|} }
\ge
1-\frac{\left|\log  |z_2 |\right|}{|\log \delta_1|}
\ge 
\\
1 - \frac{\eta_2}{ \left| 4 \log \delta + \eta_1\right|}
\ge
1 - \frac{\eta_2}{ \left| 4 \log \left| \frac{z_1}{z_2} \right| + \eta_1\right|},
\end{multline*} 
for $\delta_1$ small enough. 

Finally, \eqref{3circsing} implies 
$$
u(z_2) \le  -\frac32 
\left( 2 \log \left| \frac{z_1}{z_2} \right| - \log|z_2| \right) + \eta_2 
+ 4 \log \left| \frac{z_1}{z_2} \right| +\eta_1 + \eta_2,
$$
from which the desired estimate follows. 
\end{proof*}


\begin{thebibliography}{}

\bibitem{Ba} D. Barlet, {Espace analytique r\'eduit des cycles analytiques complexes
              compacts d'un espace analytique complexe de dimension finie}, Fonctions de plusieurs variables complexes, {II} ({S}\'em.
              Fran\c cois Norguet, 1974--1975), Lecture Notes in Math., Vol. 482, Springer Berlin (1975), p. 1--158
     
\bibitem{Be-Ta} E. Bedford, B. A. Taylor, {\it A new capacity for 
plurisubharmonic functions}, Acta Math. 149 (1982), 1--41.

\bibitem{Br-Sk} J. Brian\c con, H. Skoda, {\it Sur la cl\^oture int\'egrale
d'un id\'eal de germes de fonctions holomorphes en un
point de $\mathbb C^n$}, C. R. Acad. Sci S\'erie A 278 (1974), 949--951.

\bibitem{CarlWieg}
M. Carlehed, J. Wiegerinck,
{\it Le c\^one des fonctions
plurisousharmoniques n\'egatives et une conjecture de Coman,}
Ann. Pol. Math. { 80} (2003), 93--108.

\bibitem{Ce2} U. Cegrell, {\it The general definition of the
complex Monge-Amp\`ere operator}, Ann. Inst. Fourier (Grenoble) 54 (2004), 159--179.

\bibitem{Ce3} U. Cegrell, {\it Weak-* convergence of Monge-Amp\`ere measures}, Math. Z. 254 (2006), 505--508.

\bibitem{Ce-Po} H. I. Celik, E. A. Poletsky, {\it Fundamental 
solutions of the complex Monge-Amp\`ere equation}, Ann. Polon. Math. 57 (1997), no. 2, 103--110.

\bibitem{Co} D. Coman, {\it The pluricomplex Green function with two poles
in the unit ball of $\C^n$}, Pacific J. Math. 194 (2000), 257--283.

\bibitem{DA} J. D'Angelo, {\it Several Complex Variables and the Geometry
of Real Hypersurfaces}, Studies in Advanced Mathematics, CRC Press, Boca Raton-Ann Arbor-London-Tokyo, 1993. 

\bibitem{De1} J.-P. Demailly, {\it Mesures de Monge-Amp\`ere et mesures
pluriharmoniques}, Math. Z. 194 (1987), 519--564.

\bibitem{De2} J.-P. Demailly, {\it Complex Analytic and Differential 
Geometry}, manuscript, Universit\'e de Grenoble I, Institut Fourier, 455 p.,  
http://www-fourier.ujf-grenoble.fr/~demailly/books.html

\bibitem{De3} J.-P. Demailly, {\it Estimates on Monge-Amp\`ere operators derived from a local algebra inequality}, in Complex Analysis and Digital Geometry, 
Proceedings from the Kiselmanfest, 2006, Uppsala University, 2009, 131--143; 
available at arXiv:0709.3524v2.

\bibitem{Dou1} A. Douady, {\it Le probl\`eme des modules pour les sous-espaces analytiques
             compacts d'un espace analytique donn\'e}, Ann. Inst. Fourier (Grenoble)  16 (1966), fasc. 1,  1--95.
             
\bibitem{Dou2} A. Douady, {\it Flatness and privilege}, L'Enseignement Math\'ematique. Revue Internationale. IIe S\'erie 14 (1968),  47--74.
   
\bibitem{Ed}
A. Edigarian, {\it Remarks on the pluricomplex Green function,} Univ.
Iagel. Acta Math. { 37} (1999), 159--164.


\bibitem{Ho} L. H\"ormander, {\it An Introduction to Complex Analysis in 
Several Variables}, Third Edition (revised), Mathematical Library, Vol. 7,
North Holland, Amsterdam-New York-Oxford-Tokyo, 1990.

\bibitem{La-Si} F. L\'arusson, R. Sigurdsson, {\it Plurisubharmonic functions and analytic discs on manifolds}, J. Reine Angew. Math. 501 (1998), 1--39.

\bibitem{Lel} P. Lelong, {\it Fonction de Green pluricomplexe et lemmes
de Schwarz dans les espaces de Banach}, J. Math. Pures Appl. 68 (1989), 319--347.

\bibitem{Lem} L. Lempert, {\it La m\'etrique de Kobayashi et la 
repr\'esentation des domaines sur la boule}, Bull. Soc. Math. France 109 (1981), no. 4, 427--474. 

\bibitem{Ma} J. I. Magn\'usson, {\it A global morphism from the Douady space to the cycle space}, Math. Scand. 101 (2007), no. 1, 19--28.

\bibitem{Na} H. Nakajima, {\it Lectures on {H}ilbert schemes of points on surfaces}, University Lecture Series vol. 18. American Mathematical Society (1999).
  
\bibitem{Po} E. A. Poletsky, {\it Holomorphic Currents}, Indiana Univ. Math. J. 42 (1993), no. 1, 85--144.

\bibitem{Po-Sh} E. A. Poletsky, B. V. Shabat, {\it Invariant Metrics}, in
{\it Current Problems in Mathematics. Fundamental Directions}, Vol. 9, 292 (Itogi nauki i Tekhniki, Akad. Nauk SSSR, Vsesoyuz. Inst. Nachn. i Tekhn. Inform., Moscow, 1986), 73--125. English translation in {\it Several Complex Variables III}, Encyclopaedia of Mathematical Sciences, Vol. 9, Springer Verlag (1989), 63--111.


\bibitem{Ra-Si} A. Rashkovskii, R. Sigurdsson, {\it Green functions with singularities along complex spaces},
Int. J. Math. 16 (2005), no. 4, 333--355.

\bibitem{Th} P. J. Thomas, {\it An example of limit of Lempert functions}, Vietnam Journal of Mathematics, 35 (2007), no. 3, 317--330.

\bibitem{Th-Tr} P. J. Thomas, Nguyen Van Trao, {\it Pluricomplex Green and Lempert functions for equally weighted poles},
Arkiv f\"or Matematik, 41 (2003), no. 2, 381--400.

\bibitem{Th-Tr2} P. J. Thomas, Nguyen Van Trao, {\it Convergence and multiplicities for the Lempert function},
Arkiv f\"or Matematik, 47 (2009), no. 1, 183--204.

\bibitem{Ts} A. K. Tsikh, {\it Multidimensional Residues and Their 
Applications}, Translations of Mathematical Monographs, Vol. 103,
American Mathematical Society, Providence, 1992.

\bibitem{Za} {V.P. Zahariuta}, {\it Spaces of analytic functions and maximal
plurisubharmonic functions.} D. Sci. Dissertation, Rostov-on-Don,
1984.

\bibitem{Za-Sa} O. Zariski, P. Samuel, {\it Commutative algebra. Vol. II},
Reprint of the 1960 edition. Graduate Texts in Mathematics, Vol. 29. Springer-Verlag, New York-Heidelberg, 1975. 

\end{thebibliography}
\end{document}